\definecolor{theme}{RGB}{15, 87, 24} 
\definecolor{lighttheme}{RGB}{51, 153, 63} 
\definecolor{block}{RGB}{102, 60, 0} 
\definecolor{alert}{RGB}{212, 43, 43} 
\def\pagespace{3cm}
\newtheorem{thm}{Theorem}[section]
\newtheorem{prop}[thm]{Proposition}
\newtheorem{lemma}[thm]{Lemma}
\newtheorem{cor}[thm]{Corollary}
\newtheorem{conj}{Conjecture}
\theoremstyle{remark}
\newtheorem{remark}{Remark}
\newcommand{\bN}{\mathbb{N}}
\newcommand{\bR}{\mathbb{R}}
\newcommand{\prob}{\mathbb{P}}
\newcommand{\expec}{\mathbb{E}}
\renewcommand{\O}{\mathcal{O}}
\renewcommand{\o}{o}
\newcommand{\eps}{\varepsilon}
\DeclareMathOperator{\Leb}{Leb}
\DeclareMathOperator{\dist}{dist}
\DeclareMathOperator{\rec}{rec}
\newcommand{\lChild}{0}
\newcommand{\rChild}{1}
\newcommand{\Troot}{\varnothing}
\newcommand{\lT}{\mathcal{T}}
\newcommand{\rV}{\mathbb{V}}
\newcommand{\pts}{\mathcal{P}}
\newcommand{\grubeling}{\overset{\mathrm{ssc}}{\longrightarrow}}
\newcommand{\lbst}[1]{\lT\langle#1\rangle}
\newcommand{\sigmaP}[1]{\sigma\langle#1\rangle}
\newcommand{\bst}[1]{T\langle#1\rangle}
\newcommand{\lPT}[2]{\lT^{#1}_{#2}}
\newcommand{\poisson}[1]{\textsc{Poisson}\left(#1\right)}
\newcommand{\binomial}[2]{\textsc{Binomial}\left(#1,\,#2\right)}
\newcommand{\lTleft}{\lT_{\mathrm{left}}}
\newcommand{\lTright}{\lT_{\mathrm{right}}}
\newcommand{\lTnleft}{\lT^n_{\mathrm{left}}}
\newcommand{\lTnright}{\lT^n_{\mathrm{right}}}
\newcommand{\lTinfleft}{\lT^{\infty}_{\mathrm{left}}}
\newcommand{\lTinfright}{\lT^{\infty}_{\mathrm{right}}}
\newcommand{\Tlim}[1]{\psi_{#1}}
\newcommand{\assumption}[1]{$\mathrm{(\hyperlink{A#1}{A#1})}$}
\newcommand{\itemlink}[1]{$\mathrm{(\hyperlink{item#1}{#1})}$}
\newcommand\One{\mathbf{1}}
\newcommand{\height}[1]{h\left(\lbst{#1}\right)}
\newcommand{\LIS}[1]{\mathrm{LIS}\left(#1\right)}
\newcommand{\LDS}[1]{\mathrm{LDS}\left(#1\right)}
\title{Binary search trees of permuton samples}
\author[B. Corsini]{Beno\^it Corsini}
\address[BC]{Department of Mathematics and Computer Science, Eindhoven University of Technology, 5600 MB Eindhoven, The Netherlands}
\email{benoitcorsini@gmail.com}
\author[V. Dubach]{Victor Dubach}
  \address[VD,VF]{Université de Lorraine, CNRS, IECL, F-54000 Nancy, France}
\email{victor.dubach@univ-lorraine.fr, valentin.feray@univ-lorraine.fr}
 \author[V. Féray]{Valentin Féray}
\date{}
\begin{document}

\maketitle

\begin{abstract}
    Binary search trees (BST) are a popular type of data structure when dealing with ordered data.
    Indeed, they enable one to access and modify data efficiently, with their height corresponding to the worst retrieval time.
    From a probabilistic point of view, binary search trees associated with data arriving in a uniform random order are well understood, but less is known when the input is a non-uniform random permutation.

    We consider here the case where the input comes from i.i.d.~random points in the plane with law $\mu$,
    a model which we refer to as a {\em permuton sample}.
    Our results show that the asymptotic proportion of nodes in each subtree depends on the behavior of the measure $\mu$ at its left boundary, while the height of the BST has a universal asymptotic behavior for a large family of measures $\mu$.
    Our approach involves a mix of combinatorial and probabilistic tools, namely combinatorial properties of binary search trees, coupling arguments, and deviation estimates.
\end{abstract}

\section{Introduction}

\subsection{Context and informal description of our results}
\label{ssec:context}
A {\em binary search tree} (BST) is a rooted binary tree where nodes carry labels (which are real numbers) and where, for each vertex $v$, all labels of vertices in the left-subtree (resp.~right-subtree) attached to $v$ are smaller (resp.~bigger) than the label of $v$.
Binary search trees are a popular type of data structure for storing ordered data.
One key feature is that the {worst-case} complexity of basic operations (lookup, addition or removal of data) is proportional to the height of the tree.

Given a BST $\mathcal T$ and a real number $x$ distinct from the labels of $\mathcal T$, there is a unique way to insert $x$ into $\mathcal T$, i.e.~there is a unique BST $\mathcal T^{+x}$ obtained from $\mathcal T$ by adding a new node with label $x$.
Iterating this operation starting from the empty tree and a sequence $y=(y_1,\ldots,y_n)$ of distinct values, we get a BST $\mathcal T\langle y\rangle$ with $n$ nodes.
An example of the sequence of trees obtained from $y=(2,4,1,6,3,5)$ can be found in Figure~\ref{fig:BST}.
The shape of $\mathcal T\langle y\rangle$ (i.e.~the underlying binary tree without node labels) depends only on the relative order of the numbers $y_1,\dots,y_n$, and not on their actual value.
We can thus assume without loss of generality that the sequence $y$ is a permutation $\sigma$ of the integers from 1 to $n$, and write $\mathcal T\langle \sigma\rangle = \mathcal T\langle \sigma_1,\dots,\sigma_n \rangle$ in this case.

\begin{figure}[htb]
    \[\includegraphics[scale=.8]{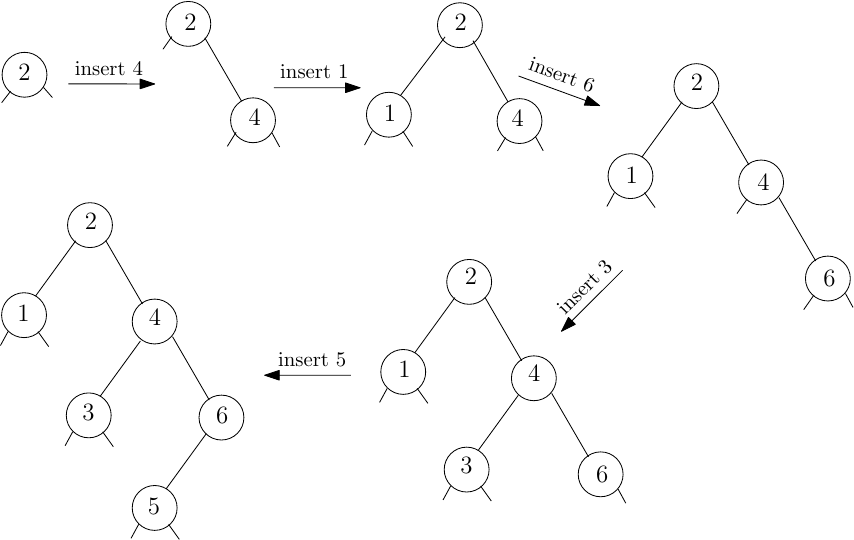}\]
    \caption{Iterative construction of the BST associated with the sequence $y=(2,4,1,6,3,5)$. 
    Let us detail the step where 3 is inserted. Since 3 is bigger than the root label (here 2), it should be added in the right-subtree attached to the root.
    We then compare 3 to the label of the root of that subtree, which is 4 in our example. 
    Since 3 is smaller than 4, it should be added in the left subtree attached to $4$.
    This subtree is empty at this stage, so we simply attach 3 to the left of 4.}
    \label{fig:BST}
\end{figure}

In the worst case, the tree $\lbst{\sigma}$ has height $n-1$ and further operations such as lookup, addition or removal of data will have a linear complexity, which is far from optimal.
However it has been proven by Devroye~\cite{devroye1986note} that, if $\sigma$ is uniformly distributed in the symmetric group $S_n$, then the height $\height{\sigma}$ is asymptotically equivalent to $c^* \log(n)$ for some constant $c^*$, yielding a much better complexity for later operations.
Assuming that $\sigma$ is uniformly distributed means that the data used to construct our BST arrived in a completely random order, which is in general unrealistic.
It seems therefore natural to study BSTs associated with non-uniform random permutations, and in particular to see how Devroye's result is modified when changing the distribution of $\sigma$.

A first step in this direction has been performed in the papers~\cite{addario2021bst-mallows,corsini2023bst-record}, where the BSTs associated with random Mallows and record-biased permutations are studied, showing interesting phase transition phenomena.
In the current paper, we will consider some geometric models of random permutations, sampled via i.i.d.~random points in the plane with some common distribution $\mu$.
These models will be referred to here as {\em permuton samples}, and denoted by $\sigma_\mu^n$; 
they appear naturally in a recently developed theory of limiting objects for large permutations, called permutons \cite{hoppen2013permutons}.
{The goal of studying such models is twofold.
First, it is a much larger but still tractable family of models than those considered before
(permuton samples are indexed by probability measures on the square,
while Mallows and record-biased permutations are one-parameter families of models).
Second, since permutons describe the \enquote{large-scale shape} of permutations, it enlightens the connection between this \enquote{large-scale shape} and the associated BST.}

Our first result (Theorem~\ref{thm:height}) shows that for a large family of permuton samples, the asymptotic behavior of the BST height is the same as the one found by Devroye for uniform random permutations, namely $\height{\sigma_\mu^n}$ is asymptotically equivalent to $c^* \log(n)$.
We also consider the {repartition of nodes in various branches of the BST}, using the formalism of subtree size convergence recently introduced by Gr\"ubel in \cite{grubel2023note}.
In this setting, {Theorem~\ref{thm:limit} below proves the convergence of the BST associated with permuton samples, under some mild assumption}, where the limit object depends on the permuton only through its \enquote{derivative} at the left edge of the unit square $[0,1]^2$ 
(the {\em derivative} of a permuton at $\{0\} \times [0,1]$ does not make sense in general, but {the mild assumption in the theorem precisely postulates its existence}).

In the remaining part of the introduction we present the model of permuton samples (\Cref{ssec:permuton-sample}), state our results precisely (\Cref{ssec:result-height,ssec:result-stscv}) and give an overview of the proofs (\Cref{ssec:proof-overview}).

\subsection{Our model: binary search trees of permuton samples}
\label{ssec:permuton-sample}

There is a natural way to map a (generic) finite set of points $\pts \subset \bR^2$ to a permutation $\sigmaP{\pts}$ and a binary search tree $\lbst{\pts}$, which we describe now.
Let $\pts=\{(x_1,y_1),\ldots,(x_n,y_n)\}$ be a set of points in  $\bR^2$ with distinct $x$- and distinct $y$-coordinates, and let $\{(x_{(1)},y_{(1)}),\ldots,(x_{(n)},y_{(n)})\}$ be its reordering such that $x_{(1)}<\ldots<x_{(n)}$.
Then there exists a unique permutation $\sigma=\sigmaP{\pts}$ such that $(y_{(1)},\dots,y_{(n)})$ and $(\sigma_1,\dots,\sigma_n)$ are in the same relative order.
We let $\lbst{\pts}:=\lbst{y_{(1)},\dots,y_{(n)}}$ and note that the trees $\lbst{\pts}=\lbst{y_{(1)},\dots,y_{(n)}}$ and $\lbst{\sigmaP{\pts}}=\lbst{\sigma_1,\dots,\sigma_n}$ have the same shape since the two sequences have the same relative order.
They do, however, have different sets of labels:
$\{y_1,\dots,y_n\}$ for $\lbst{\pts}$ and $\{1,\dots,n\}$ for $\lbst{\sigmaP{\pts}}$.
These constructions are illustrated in \Cref{fig:BST-points}.
The shape of $\lbst{\pts}$ is indeed the same as that of $\lbst{\sigmaP{\pts}}$ (see the last image in \Cref{fig:BST}).

\begin{figure}[htb]
    \[\includegraphics[scale=1]{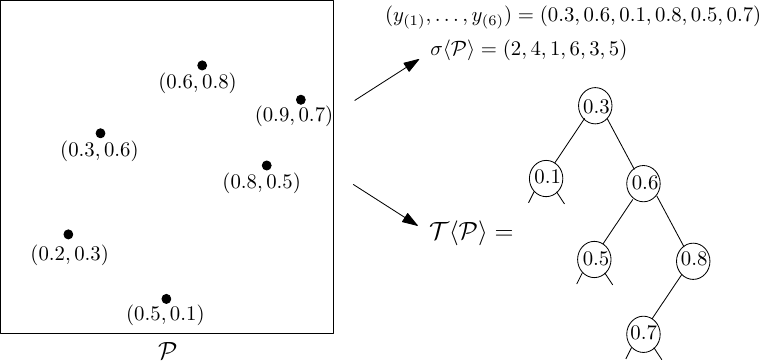}\]
    \caption{A set of points in $\mathbb R^2$ and its associated permutation and binary search tree.}
    \label{fig:BST-points}
\end{figure}

Now consider a probability measure $\mu$ on $\bR^2$ and take a set $\pts^n_\mu$ of $n$ i.i.d.~points in $\bR^2$ with distribution $\mu$.
In order to make sure that the associated permutation and binary search tree are always well-defined, we need the coordinates of the points to be all distinct.
To this extent, we assume for the rest of this work that the projections of $\mu$ on both axes have no atom.
Moreover, since the permutation and the shape of the tree only depend on the relative positions of the points, without loss of generality we can re-scale $\mu$ so that its support is in $[0,1]^2$ and so that, for any Lebesgue-measurable subset $A$ of $[0,1]$:
\begin{equation}\label{eq:uniform_marginals}
    \mu(A\times[0,1]) = \mu([0,1]\times A) = \Leb(A)
\end{equation}
where $\Leb$ is the uniform measure on $[0,1]$ (see \cite[Remark~1.2]{borga2022large} for details). 
Such measures $\mu$ with uniform projections on the axes are called \textit{permutons}\footnote{
The joint CDF of a permuton is called a \textit{copula} in the statistics literature; see \cite{gruebel2024copulas}. 
}.
Permutons are natural limit objects for large permutations, see e.g.~\cite{hoppen2013permutons,bassino2020universal}.
The associated model of random permutations $\sigmaP{\pts_\mu^n}$ will then simply be denoted by $\sigma_\mu^n$.
This is a broad generalization of the uniform measure on permutations of size $n$, which corresponds to $\mu=\Leb_{[0,1]^2}$, the Lebesgue measure on $[0,1]^2$.
Such models have been considered in the literature under various perspectives, see e.g.~\cite{deuschel1995records,borga2022large,dubach2023LIS-linear,dubach2023LIS-inermediate,sjostrand2023monotone}.

In the current paper, we are interested in the binary search tree $\lbst{\sigma_\mu^n}$ of this random permutation model.
Since we will be interested only in the shape of this tree (height, subtree size convergence), we may and will equivalently consider the tree $\lbst{\pts_\mu^n}$ instead of $\lbst{\sigma_\mu^n}$.

\subsection{First main result: universal behavior of the BST height}
\label{ssec:result-height}

For a (labeled binary) tree $\mathcal T$, we denote by $h(\mathcal T)$ its height, i.e.~the maximal distance from a leaf to the root.
As mentioned in \Cref{ssec:context}, Devroye~\cite{devroye1986note} proved that for uniform random permutations $\sigma^n$ of size $n$, the quantity $\height{\sigma^n}/\log n$ converges in probability and in $L^p$ (for all $p\ge1$) to a constant $c^*$, defined as the unique solution to $c\log(2e/c)=1$ with $c\geq 2$.
Our first result gives a sufficient condition on a permuton $\mu$, under which the same result holds for the height of $\lbst{\pts^n_\mu}$.
In the following, a permuton $\mu$ is said to satisfy assumption \assumption{1}~\hypertarget{A1}{if $\mu$ has a bounded density $\rho$ on the whole unit square $[0,1]^2$, which is continuous and positive on a neighborhood of $\{0\} \times [0,1]$.}

\begin{thm}[Universality of BST height for permuton samples]\label{thm:height}
    Let $\mu$ be a permuton satisfying Assumption~\assumption{1}.
    Then, as $n$ goes to infinity, the following convergence holds in probability and in $L^p$ for any $p\ge1$:
    \begin{align*}
        \frac{\height{\pts^n_\mu}}{c^*\log n} \longrightarrow1\,.
    \end{align*}
\end{thm}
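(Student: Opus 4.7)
My plan is to reduce the permuton case to Devroye's uniform result via the recursive fragmentation description of the BST. The root of $\lbst{\pts^n_\mu}$ is the leftmost point $(X^*,Y^*)$, and the two subtrees are BSTs of the points of $\pts^n_\mu$ in the rectangles $(X^*,1]\times[0,Y^*)$ and $(X^*,1]\times(Y^*,1]$, whose sizes concentrate around $nY^*$ and $n(1-Y^*)$ respectively (since $\mu$ has uniform $x$-marginal). Because $X^*\sim1/n$ and, by Assumption~\assumption{1}, $\rho$ is continuous at $\{0\}\times[0,1]$, the conditional density of $Y^*$ is close to $\rho(0,\cdot)$; iterating this decomposition along any root-to-node path of depth $k=O(\log n)$ keeps the relevant $x$-coordinates within $o(1)$ of zero, so at every level the split distribution is $(1+o(1))$-close to the one dictated by $\rho(0,\cdot)$ alone.

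With this recursive picture in hand, the main computation is a change of variables. Let $F(y):=\int_0^y\rho(0,t)\,dt$; by Assumption~\assumption{1} this map is bi-Lipschitz on $[0,1]$ with constants in $[\inf\rho(0,\cdot),\|\rho\|_\infty]\subset(0,\infty)$. Setting $Z:=F(Y)$ turns the asymptotic split distributions into exact uniforms on the current $Z$-interval, so the $Z$-length of the fragment at any node of depth $k$ behaves like $\prod_{i=1}^k U_i$ for i.i.d.~Uniform$[0,1]$ variables $U_i$, exactly as in Devroye's branching-random-walk analysis. By bi-Lipschitzness the $y$-length of the fragment (and hence the subtree size, which is $\Theta(n\cdot y\text{-length})$) is of the same order up to multiplicative constants. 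The height equals the largest $k$ such that some root-to-node path has fragment size at least $1$; Devroye's large-deviation estimates on the branching walk then give $k=(1+o(1))c^*\log n$ with exponential tails, yielding convergence in probability, and combining these tails with the deterministic bound $h\le n$ upgrades the statement to $L^p$ convergence for every $p\ge1$.

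The main obstacle is controlling the deviation of $\rho(x,\cdot)$ from $\rho(0,\cdot)$ across the $c^*\log n$ levels of a potential deep path and ensuring that the accumulated errors do not shift the limiting constant. The naive inequality \enquote{$\pts'\subseteq\pts''$ implies $h(\lbst{\pts'})\le h(\lbst{\pts''})$} is \emph{false} (a small example shows that inserting a single point with small $x$-coordinate can strictly reduce every root-to-leaf depth), so one cannot sandwich the permuton BST between two uniform BSTs by mere inclusion. Instead I would couple level by level, using a union bound over the at most polynomial-in-$n$ number of candidate paths of depth $(c^*+\eps)\log n$ against Devroye's exponential tails. Paths that leave the narrow strip where $\rho(x,\cdot)$ is close to $\rho(0,\cdot)$ must be controlled separately, using only the crude estimate $\rho\le\|\rho\|_\infty$ and showing that the contribution of such paths is negligible.
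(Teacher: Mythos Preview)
Your recursive-fragmentation picture is natural, but the claim that the relevant $x$-coordinates stay within $o(1)$ of zero along paths of depth $O(\log n)$ is false, and this is where the argument breaks. The node at depth $j$ on a root-to-leaf path is the point of smallest $x$-coordinate in its $y$-interval; that interval holds the $m_j$ points of the subtree rooted there, so the node's $x$-coordinate is of order $1/m_j$. For $j$ approaching $c^*\log n$ one has $m_j=O(1)$, hence $x$-coordinate $\Theta(1)$. Your change-of-variables comparison to Devroye's branching walk therefore loses all control precisely at the depths that determine the height. You do flag this (\enquote{paths that leave the narrow strip \dots\ must be controlled separately}), but this is the crux, not a side case: once a path exits the strip, the remaining subtree can still have up to $\Theta(\log n)$ points (the maximal gap among the $\Theta(n)$ strip points has width $\Theta(\log n/n)$), and one must show that \emph{each} of the $\Theta(n)$ such subtrees has height $o(\log n)$. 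The bound $\rho\le\|\rho\|_\infty$ alone does not deliver this, and Devroye's tails do not apply since the splits outside the strip are not close to uniform.

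The paper resolves this with a single cut rather than recursion: points with $x\le\beta$ form a \enquote{top tree}, handled by one change of variables plus a Poisson-thinning coupling to a uniform sample (your map $F$ is essentially this step, so that part of your plan is sound); the remaining \enquote{hanging trees} are controlled via the deterministic inequality $\height{\sigma}\le\LIS{\sigma}+\LDS{\sigma}$ together with an extreme-deviation estimate for longest monotone subsequences. Crucially, $\mathrm{LIS}$ and $\mathrm{LDS}$ \emph{are} monotone under point insertion, so the crude density bound suffices there---exactly the monotonicity that BST height lacks and that you correctly identify as an obstacle. This LIS/LDS detour is the missing idea in your outline, and it is precisely where the bounded-density-on-the-whole-square part of Assumption~\assumption{1} enters (the paper exhibits a permuton with positive continuous density on a left band for which the conclusion of the theorem fails).
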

The constant $c^*$ in the above theorem is the same as in the uniform case, i.e.~the unique solution to $c\log(2e/c)=1$ with $c\geq 2$.

Let us comment on the Assumption~\assumption{1}.
A natural sufficient condition is that the density $\rho$ is continuous on the whole square $[0,1]^2$ and positive on the left edge $\{0\}\times [0,1]$.
In Section~\ref{sec:hypothesis-A1}, we will see that this positivity assumption cannot be skipped.
Indeed we exhibit, for any $\delta>0$, a permuton $\mu_\delta$ with a continuous density vanishing on $\{0\}\times [\tfrac12,1]$ such that $\lbst{\pts^n_\mu}$ has height at least $\Theta(n^{(1-\delta)/2})$ with probability tending to 1.

In \Cref{ssec:logarithmic height but greater constant}, we also provide an example of a permuton $\mu_\beta$ with density $1$ on the band $[0,\beta] \times [0,1]$ and for which, for any $\eps>0$, the tree $\lbst{\pts^n_{\mu_\beta}}$ has height at least $\frac{1-\beta}{\beta+\eps}\log n$ with high probability.
Choosing $\beta$ and $\eps$ such that $(1-\beta)/(\beta+\eps) > c^*$ gives us an example of permuton which has a positive continuous density on a band $[0,\beta] \times [0,1]$, but for which the conclusion of \Cref{thm:height} fails.
Hence the existence of a density on the whole square in Assumption~\assumption{1} is needed.

On the other hand, we could not construct a permuton $\mu$
such that $\height{\pts^n_\mu}$ is asymptotically smaller than $c^* \log n$
with a non-vanishing probability.
This leads us to the following conjecture.
\begin{conj}
  \label{conj:uniform-optimal}
    For any permuton $\mu$ and $\eps >0$, one has
    \[ \lim_{n\to\infty} \prob\left[ \frac{ \height{\pts^n_\mu} }{\log n} < c^*-\eps \right] =0 .\]
\end{conj}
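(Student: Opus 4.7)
The plan is to reduce the problem to a local, nearly-uniform sub-permuton via a simple but useful ``rectangle containment'' observation. Let $B=[x_0,x_0+\delta]\times[y_0,y_0+\delta]$ be an axis-aligned square and let $p,p'\in\pts^n_\mu\cap B$ with $x_p<x_{p'}$. By the standard characterization of ancestry in a BST built from points in the plane (inserted in order of $x$-coordinate), $p$ is an ancestor of $p'$ in $\lbst{\pts^n_\mu}$ if and only if no third point $r\in\pts^n_\mu$ has $x_r\in(x_p,x_{p'})$ and $y_r$ strictly between $y_p$ and $y_{p'}$. Since $p,p'\in B$, any such interfering $r$ has both coordinates in the coordinate ranges of $B$ and therefore belongs to $B$ as well. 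Consequently, ancestry among points of $\pts^n_\mu\cap B$ is computed identically in $\lbst{\pts^n_\mu}$ and in the sub-BST $\lbst{\pts^n_\mu\cap B}$ built from $\pts^n_\mu\cap B$ alone; in particular
\[ \height{\pts^n_\mu}\;\ge\;\height{\pts^n_\mu\cap B}. \]

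I would first treat the case where $\mu$ has a nontrivial absolutely continuous part with density $\rho$. By the Lebesgue differentiation theorem, pick a point $(x_0,y_0)$ at which $\rho_0:=\rho(x_0,y_0)>0$, at which the singular part of $\mu$ is locally negligible, and (up to a brief mollification argument) at which $\rho$ is continuous on a neighborhood. For $\delta$ small, the conditional measure $\mu|_B/\mu(B)$ is close in total variation to the uniform measure on $B$, and the number $M$ of points of $\pts^n_\mu$ falling in $B$ concentrates around $\rho_0\delta^2 n$. After affinely rescaling $B$ onto $[0,1]^2$, the restricted sample $\pts^n_\mu\cap B$ is identified with an approximate permuton sample of size $M$ whose underlying permuton satisfies Assumption~\assumption{1}. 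Theorem~\ref{thm:height} applied to this rescaled sub-problem gives $\height{\pts^n_\mu\cap B}/\log M\to c^*$ in probability, and since $\log M=\log n+O(1)$, the displayed inequality above yields the conjecture in this case.

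The real obstacle is the purely singular case: every restriction $\mu|_B$ of a singular permuton remains singular, so the rescaled sub-problem never satisfies \assumption{1} and Theorem~\ref{thm:height} cannot be invoked. Heuristically, singular permutons should force height much larger than $c^*\log n$ (as illustrated by the anti-diagonal permuton and by the $\mu_\delta,\mu_\beta$ examples of the paper), but I do not see how to establish this uniformly over all singular permutons without substantial further work. Two possible avenues: an Erd\H{o}s--Szekeres-style argument extracting long nearly-monotone subsequences from clusters of points near the lower-dimensional support of $\mu$; or an approximation via $(1-\eps)\mu+\eps\,\Leb_{[0,1]^2}$, which does satisfy \assumption{1}, combined with a coupling argument. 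The latter is delicate because the BST height is not monotone under addition or deletion of points, so standard coupling techniques do not transfer the lower bound back to $\mu$ in a straightforward way; overcoming this is, to my eyes, the heart of the remaining difficulty.
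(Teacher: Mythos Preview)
This statement is a \emph{conjecture} in the paper, not a theorem; the paper does not prove it. The only partial result is Proposition~\ref{prop:uniform_optimal_partial}, which assumes $\mu$ has a continuous positive density near some point $(0,y)$ on the \emph{left edge} of the square.

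Your ``rectangle containment'' step contains a genuine error. The ancestry criterion you state is wrong: by Lemma~\ref{lem:ancestor condition}, for $x_p<x_{p'}$ the point $p$ is an ancestor of $p'$ if and only if there is no $r$ with $x_r<x_p$ (not $x_r\in(x_p,x_{p'})$) and $y_r$ strictly between $y_p$ and $y_{p'}$. With the correct criterion, an interfering point $r$ can have $x_r<x_0$ while still having $y_r\in[y_0,y_0+\delta]$, so $r\notin B$ in general. Concretely, take $B=[0.5,0.6]\times[0.4,0.5]$, nine increasing points $(0.51,0.41),\dots,(0.59,0.49)$ inside $B$, and one extra point $(0.1,0.45)$: the sub-BST $\lbst{\pts\cap B}$ is a single chain of height $8$, but in the full BST the point $(0.1,0.45)$ is the root and splits the nine points into two subtrees of height $\le 4$. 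Hence $\height{\pts}\ge\height{\pts\cap B}$ fails for a box not touching $\{x=0\}$.

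The argument survives only when $x_0=0$, since then $x_r<x_p\le\delta$ forces $x_r\in[0,\delta]$. That is exactly how the paper proves Proposition~\ref{prop:uniform_optimal_partial}: it takes $R_0=[0,\delta]\times[y_0-\delta,y_0+\delta]$ and uses Lemma~\ref{lem:ancestor condition} to conclude that chains of $\lbst{\pts^N_\mu\cap R_0}$ remain chains in $\lbst{\pts^N_\mu}$. So once your ancestry criterion is fixed, your absolutely-continuous case collapses to requiring a Lebesgue point of the density \emph{on the left edge}, which is precisely the hypothesis of the paper's partial result and does not advance beyond it. Your honest assessment of the singular case and of the non-monotonicity obstruction (cf.\ Remark~\ref{rem:height and points}) is accurate; these are exactly why the statement remains open.
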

As evidence supporting \Cref{conj:uniform-optimal}, we show that the statement holds for permutons with a density assumed to be continuous and positive around a single point $(0,y)$ on the left edge of the unit square;
see \Cref{prop:uniform_optimal_partial}.

Let us emphasize that, given the application of binary search trees to data storage, 
this paper focuses on permutons that yield \enquote{well-packed} BSTs.
It is easy to construct permutons that yield much deeper BSTs (see e.g.~Section~\ref{sec:hypothesis-A1}), but we will not attempt to characterize them here.

\begin{remark}
There is a natural way to construct the sequence of permutations $(\sigma^n_\mu)_{n \ge 1}$ on the same probability space: 
we consider a single infinite sequence $((x_i,y_i))_{i \ge 1}$ of i.i.d.~points with law $\mu$, and construct each $\sigma^n_\mu$ using the first $n$ points of this sequence.
We may wonder whether the convergence in \Cref{thm:height} holds almost surely for this construction.
We do not know whether this is the case, even for $\mu=\Leb_{[0,1]^2}$.
It has been shown by Pittel in \cite{pittel1984BST} that, if $Y_1,Y_2,\dots$ is an infinite sequence of uniform random variables in $[0,1]$, then $h(\mathcal T\langle Y_1,\dots,Y_n\rangle)/\log n$ converges a.s.~to a constant $\beta$.
Combining with the above-mentioned result of Devroye, the constant $\beta$ must be equal to $c^*$, as noted in \cite[Section 5]{devroye1986note}.
However, the sequence $\left( h(\mathcal T\langle Y_1,\dots,Y_n\rangle) \right)_{n\ge1}$ considered by Pittel does not have the same distribution as $\left( \height{\pts^n_\mu} \right)_{n\ge1}$ for $\mu=\Leb_{[0,1]^2}$.
Indeed, while both have the same distribution for each $n$, the couplings between different values of $n$ are different:
in Pittel's model, the new element $Y_{n+1}$ is always added at the end of the sequence $Y_{1},\dots, Y_n$, while in our model, the new element of $Y_{(1)},\dots,Y_{(n+1)}$ is added in a uniformly random position in $Y_{(1)},\dots,Y_{(n)}$ ($Y_{(i)}$ depends implicitly on $n$, not only on $i$).
\end{remark}

\subsection{Second main result: subtree size convergence of the BSTs}
\label{ssec:result-stscv}

In this section, we state a limit theorem for $\lbst{\pts^n_\mu}$ (under a mild assumption on $\mu$) in the sense of the subtree size convergence recently introduced by Gr\"ubel~\cite{grubel2023note}.
We first recall this notion of convergence.
\medskip
 
From now on, we identify nodes in a binary tree with finite words in the alphabet $\{0,1\}$ as follows:
the empty word $\varnothing$ corresponds to the root, and for a node $v$ encoded by $w$,
the words $w\lChild$ and $w\rChild$ obtained by appending $\lChild$ or $\rChild$ to $w$ encode respectively the left and right children of $v$.
Moreover, we let $\rV=\{\lChild,\rChild\}^*$ be the set of all finite words on $\{\lChild,\rChild\}$, representing all nodes of the complete infinite binary tree.
With this notation, a labeled tree is identified with a function from a subset of $\rV$ to $\bR$, where the domain of the function is the set of nodes in the tree, and a node is mapped to its label.
In particular, $\mathcal T(v)$ denotes the label of the node $v$ in $\mathcal T$.
We also write $v \in\mathcal T$ to indicate that the node $v$ is in the tree $\mathcal T$.

Given a finite (potentially labeled) tree $\lT$ and a node $v\in\rV$, we let
\begin{align*}
    t(\lT,v):=\frac{1}{|\lT|}\Big|\Big\{u\in\lT:v\preceq u\Big\}\Big|,
\end{align*}
where $v \preceq u$ means that $v$ is a prefix of $u$.
In words, $t(\lT,v)$ is the proportion of nodes in $\lT$ which are descendants of $v$.

Further write $\Psi$ for the set of functions $\psi:\rV\rightarrow[0,1]$ such that $\psi(\Troot)=1$ and such that, for any $v\in\rV$, we have $\psi(v)=\psi(v\lChild)+\psi(v\rChild)$.
Then a sequence of binary trees $(\lT^n)_{n\in\bN}$ is said to converge to a function $\psi\in\Psi$ if and only if, for any $v \in \rV$, the quantity $t(\lT^n,v)$ converges to $\psi(v)$.
If that is the case, we write
$
    \lT_n\grubeling\psi
$,
and refer to this as \textit{subtree size convergence} and to $\psi$ as the \textit{subtree size limit} of $\lT_n$.
\medskip

We now define two important objects for the subtree size convergence of BSTs of permuton samples.
For any complete infinite BST $\lT:\rV\rightarrow(0,1)$ with labels in $(0,1)$,
we define $\lTleft:\rV\rightarrow\bR$ as follows.
First of all, if $v$ consists only of zeros, we let $\lTleft(v)=0$.
Now, given that $v=v'\rChild\lChild^k$ for some $k\geq 0$, let $\lTleft(v)=\lT(v')$.
Informally, $\lTleft(v)$ is the right-most ancestor of $v$ to its left.
Define similarly $\lTright$ such that $\lTright(v)=1$ if $v$ consists only of ones, and $\lTright(v)=\lT(v')$ whenever $v=v'\lChild\rChild^k$ for some $k\geq0$.
In words, $\lTright(v)$ is the left-most ancestor of $v$ to its right.
See \Cref{fig:right-left-ancestors} for an illustration of $\lTleft$ and $\lTright$.
We note that these definitions imply that $\lTleft(v)<\lT(v)<\lTright(v)$ for any $v\in\rV$.

\begin{figure}[htb]
    \centering
    \includegraphics[scale=0.5]{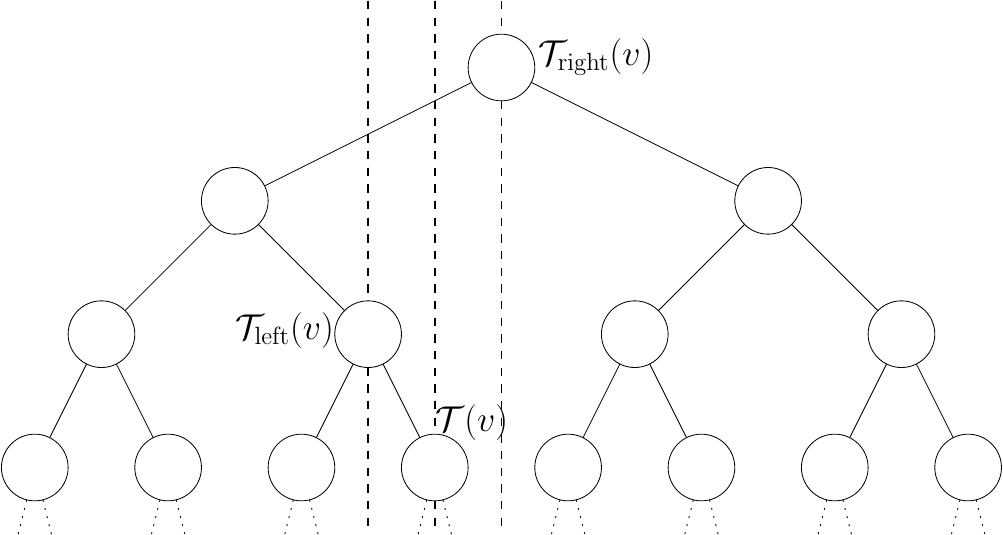}
    \caption{Representation of $\lTright$ and $\lTleft$ on a labeled BST, for the node $v=011$.}
    \label{fig:right-left-ancestors}
\end{figure}

Given a probability measure $m$ on $[0,1]$ without atoms, write $\Tlim{m}\in\Psi$ for the following random object.
First, let $Y=(Y_1,Y_2,\ldots)$ be an infinite sequence of independent random variables distributed according to $m$ and write $\lT^m=\lbst{Y}$ for the corresponding (infinite) BST.
Then, let $\Tlim{m}=\lTright^m-\lTleft^m$.
This is well-defined, since $\lT^m$ is complete with probability $1$ (this follows from \cite[Theorem~6.1]{devroye1986note} and the fact that the shape of $\lT^m=\lbst{Y}$ is independent of $m$).
Further note that this object indeed belongs a.s.~to $\Psi$, since $\lTleft(v\lChild)=\lTleft(v)$, $\lTleft(v\rChild)=\lT(v)$, $\lTright(v\lChild)=\lT(v)$, and $\lTright(v\rChild)=\lTright(v)$.

To illustrate this construction, take $m=2x dx$, where $dx$ is the Lebesgue measure on $[0,1]$.
Here is an i.i.d.~sample of size 10 from $m$: 
$(0.73,0.33,0.75,0.35,0.68,0.28,0.72,0.87,0.25,0.67)$.
Its associated BST, which is the top part of the BST $\lT^m=\lbst{Y}$ associated with an infinite sample, is given in \Cref{fig:example_Psi}, left.
The associated realization of the function $\Tlim{m}$ is then given in \Cref{fig:example_Psi}, right.
For instance, for the node $v=\lChild\rChild\rChild$ as chosen in \Cref{fig:right-left-ancestors}, we can compute $\Tlim{m}(v)=\lTright^m(v)-\lTleft^m(v)=\lT^m(\Troot)-\lT^m(\lChild\rChild)=0.73-0.35=0.38$, as written in \Cref{fig:example_Psi}.

\begin{figure}[htb]
    \[
  \begin{array}{c}
 \includegraphics[scale=0.45]{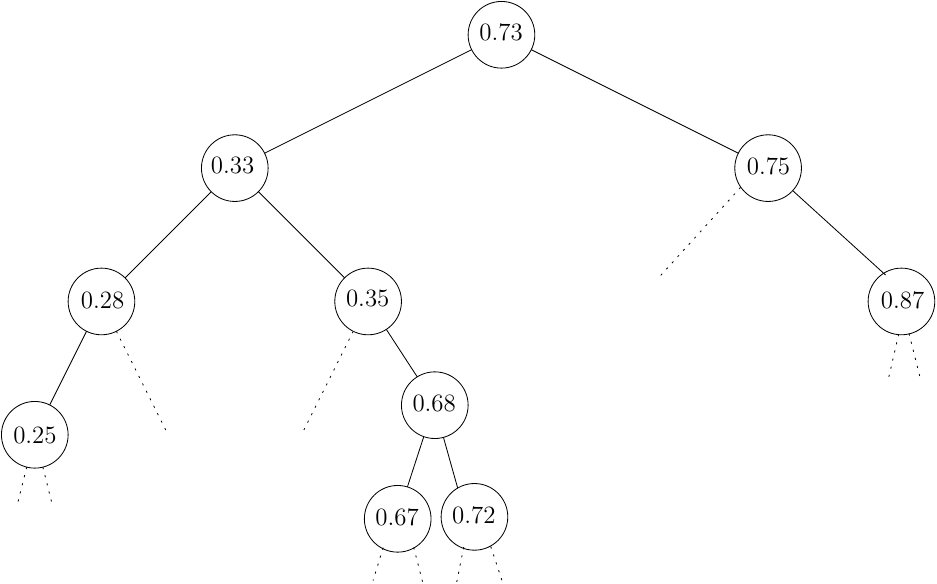} 
  \end{array} \ \begin{array}{c}
  \includegraphics[scale=0.45]{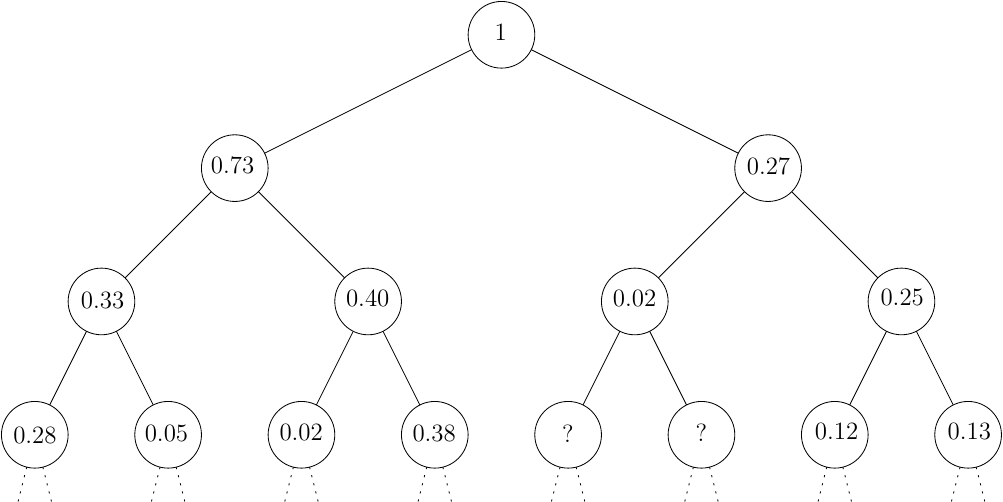}
  \end{array}
  \]
    \caption{Example of realizations of $\lT^m$ and $\Tlim{m}$ with $m=2x dx$. 
    Note that we do not have enough data to compute two of the values of $\Tlim{m}$ on nodes in the third level;
    those nodes are marked with question marks.}
    \label{fig:example_Psi}
\end{figure}
\medskip

We can now state our second main result. 
A permuton is said to satisfy Assumption \assumption{2}~\hypertarget{A2}{if there exists a probability measure $\mu_0$ on $[0,1]$ {\em without atoms} such that
\begin{equation}\label{eq:def_mu0}
    \frac1x \mu\big([0,x]\times\cdot\big)
    \,\underset{x\to0^+}{\longrightarrow}\,
    \mu_0,
\end{equation}
where the convergence occurs according to the weak topology of probability measures on $[0,1]$.} 
Assumption~\assumption{2} is weaker than \assumption{1}: 
in particular, \assumption{2} holds whenever $\mu$ admits a continuous density on a neighborhood of $\{0\} \times [0,1]$, without any further assumption on that density.

When we refer to \eqref{eq:def_mu0} instead of \assumption{2}, the measure $\mu_0$ may \textit{a priori} have atoms.
It is easy to find permutons for which \assumption{2} does not hold, and in \Cref{ssec:no_STS} we exhibit a permuton for which \eqref{eq:def_mu0} does not hold.

\begin{thm}[subtree size convergence of BSTs of permuton samples]\label{thm:limit}
    Let $\mu$ be a permuton satisfying \assumption{2} and let $\mu_0$ be defined by \eqref{eq:def_mu0}.
    Then, we have the following convergence in distribution for the subtree size topology:
    \begin{align*}
        \lbst{\pts^n_\mu} \grubeling \Tlim{\mu_0} \,.
    \end{align*}
    Conversely, if $\lbst{\pts^n_\mu}$ converges in distribution for the subtree size topology as $n\to\infty$, then there exists a probability measure $\mu_0$ on $[0,1]$ (possibly with atoms) for which \eqref{eq:def_mu0} holds, that is:
    \begin{equation*}
        \frac1x \mu\big([0,x]\times\cdot\big)
        \,\underset{x\to0^+}{\longrightarrow}\,
        \mu_0 \,.
    \end{equation*}
\end{thm}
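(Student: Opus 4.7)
The proof has two directions.

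\textbf{Forward direction.} The starting point is the identity
\[
t\bigl(\lbst{\pts^n_\mu}, v\bigr) \;=\; \frac{1}{n}\,\#\bigl\{\,i \le n : Y_i \in I_v^{(n)}\,\bigr\},
\]
where $Y_1, \dots, Y_n$ are the $Y$-coordinates of the points of $\pts^n_\mu$ (in any fixed order) and $I_v^{(n)} := \bigl(\lTleft^{(n)}(v),\, \lTright^{(n)}(v)\bigr)$ is the open interval bounded by the labels of the nearest ancestors of $v$ to its left and right in $\lbst{\pts^n_\mu}$, with the conventions $0$ and $1$ at the left and right boundary (the identity extends to $v \notin \lbst{\pts^n_\mu}$ since no $Y_i$ can fall in $I_v^{(n)}$ in that case). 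My strategy is: (a)~show that for any finite set $V \subset \rV$, the random vector $(I_v^{(n)})_{v \in V}$ converges jointly in distribution to $(I_v^\infty)_{v \in V}$, where $I_v^\infty := (\lTleft^{\mu_0}(v),\, \lTright^{\mu_0}(v))$ refers to the infinite BST $\lT^{\mu_0}$ built from i.i.d.~$\mu_0$-samples; and (b)~observe that the $Y_i$'s are marginally i.i.d.~uniform on $[0,1]$, so by Glivenko--Cantelli $\frac{1}{n}\#\{i : Y_i \in I\} \to |I|$ almost surely, uniformly over intervals $I \subset [0,1]$. Combining (a) and (b) yields joint convergence $t(\lbst{\pts^n_\mu}, v) \to |I_v^\infty| = \Tlim{\mu_0}(v)$ in distribution, which is precisely subtree size convergence.

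The crux is (a). It reduces to showing that for any fixed $K$, the first $K$ inserted $Y$-coordinates---equivalently, the $Y$-coordinates of the $K$ leftmost points of $\pts^n_\mu$---jointly converge in distribution to $K$ i.i.d.~samples from $\mu_0$. For any small $\epsilon > 0$, the probability that these $K$ leftmost points all lie in the strip $[0, \epsilon] \times [0,1]$ tends to $1$ as $n \to \infty$ (since the $K$th order statistic $X_{(K)}$ is $O(K/n)$ in probability). Conditionally on this event, the $K$ leftmost points are i.i.d.~from the normalized restriction of $\mu$ to the strip, and hence their $Y$-coordinates are i.i.d.~from $\nu_\epsilon := \mu([0,\epsilon] \times \cdot)/\epsilon$. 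Assumption~\assumption{2} guarantees $\nu_\epsilon \to \mu_0$ weakly as $\epsilon \to 0$, and letting $n \to \infty$ before $\epsilon \to 0$ closes the argument. One technical point: to deduce convergence of $(I_v^{(n)})_{v \in V}$, one must check that for $K$ chosen large relative to the maximal depth $D$ of nodes in $V$, the ancestors of every $v \in V$ actually lie among the first $K$ insertions with high probability. This follows from the a.s.~completeness of $\lT^{\mu_0}$, which itself rests on the no-atom assumption on $\mu_0$ built into \assumption{2}.

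\textbf{Converse direction.} Suppose $\lbst{\pts^n_\mu}$ converges in distribution in the subtree size topology; then $t(\lbst{\pts^n_\mu}, \lChild)$ converges in distribution. The Glivenko--Cantelli remark above shows that $t(\lbst{\pts^n_\mu}, \lChild)$ differs from the root label $Y_{\mathrm{root}}^n$ by $o_{\prob}(1)$, so $Y_{\mathrm{root}}^n$ converges in distribution to some limit law, which we take as the definition of $\mu_0$. Integration by parts on the $X$-variable yields the explicit formula
\[
\prob\bigl(Y_{\mathrm{root}}^n \le y\bigr) \;=\; \int_0^1 \nu_x\bigl([0,y]\bigr)\, \pi_n(x)\,dx,
\]
where $\pi_n(x) = n(n-1)\,x\,(1-x)^{n-2}$ is the density of the second order statistic of $n$ i.i.d.~uniforms, concentrated near $0$ at scale $1/n$. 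This is an Abel-type weighted average of $\nu_x$, and the main obstacle of the whole theorem lies here: Tauberian inversion. Convergence of this single weighted average is by itself insufficient to conclude Cesaro-type convergence $\nu_x \to \mu_0$. To close the gap, I would leverage the full joint convergence of $(t(\lbst{\pts^n_\mu}, v))_{v \in V}$ over all finite $V$, which---via the very same dictionary as in the forward direction---encodes convergence of a rich family of functionals of $\nu_x$ and of the early inserted $Y$-coordinates. The expectation is that this richer information rigidifies $\nu_x$ enough as $x \to 0^+$ to force weak convergence, with the candidate limit $\mu_0$ identified as the marginal law of $\psi_\infty(\lChild)$ in the subtree size limit $\psi_\infty$.
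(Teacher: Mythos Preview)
Your forward direction follows the paper's strategy closely: the identity for $t(\lbst{\pts^n_\mu},v)$, Glivenko--Cantelli for the uniform $Y$-marginals, and reduction to joint convergence of the first $K$ inserted $Y$-coordinates all match Section~\ref{sec:ssc-proof}. One inaccuracy: your claim that ``conditionally on the $K$ leftmost points lying in $[0,\epsilon]\times[0,1]$, these points are i.i.d.\ from the normalized restriction of $\mu$ to the strip'' is false as stated---conditioning on $X_{(K)}\le\epsilon$ does not produce an i.i.d.\ sample, and even conditioning on the disintegration kernels $\tilde\mu_{X_{(j)}}$ only gives the desired conclusion under pointwise convergence $\tilde\mu_x\to\mu_0$, which is strictly stronger than~\assumption{2}. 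The paper fills this gap by Poissonizing and carrying out a careful block decomposition (Proposition~\ref{prop:k points}), then de-Poissonizing (Corollary~\ref{corol:k-pts-fixed-n}); you would need something equivalent.

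In the converse direction you have the right opening move---$t(\lT^n,\lChild)=Y^n_{(1)}+o_\prob(1)$ via Glivenko--Cantelli---but then two issues arise. First, a slip: the density you write, $n(n-1)x(1-x)^{n-2}$, is that of the \emph{second} order statistic; the root label corresponds to the minimum $X_{(1)}$, with density $n(1-x)^{n-1}$. Second, and more importantly, your assertion that ``convergence of this single weighted average is by itself insufficient'' is wrong, and the proposed workaround via richer joint information is both vague and unnecessary. After Poissonization the formula becomes exactly $\prob[Y^N_{(1)}\in A]=n\,Lf_A(n)$ with $f_A(x)=\tilde\mu_x(A)$ and $L$ the Laplace transform; convergence of $n\,Lf_A(n)$ to $\mu_0(A)$ then yields $\int_0^x f_A\sim\mu_0(A)\,x$ by a classical Tauberian theorem (Feller, Vol.~II, p.~445), which is precisely~\eqref{eq:def_mu0}. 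So a single node suffices, and the paper's Proposition~\ref{prop:k points}, direction \eqref{item:cv_first_Y}$\Rightarrow$\eqref{item:cv_to_mu0}, does exactly this.
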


It is interesting to see that under~\assumption{2} the limit depends on $\mu$ only through $\mu_0$.
The assumption that $\mu_0$ does not have atoms is important when identifying this limit.
A first difficulty when $\mu_0$ has some atom is that the BST $\lbst{Y_1,Y_2,\dots}$ where $Y_1$, $Y_2$, \dots, are i.i.d.~random variables with distribution $\mu_0$ is ill-defined since some of the $Y_i$'s are equal.
We can also see that, in this case, the limit of $\lbst{\pts^n_{\mu}}$ may not depend only on $\mu_0$.
Indeed, consider the permuton $\mu^1$ (resp.~$\mu^2$) supported by the set $y \equiv \tfrac12+x$ modulo 1 (resp.~$y \equiv \tfrac12-x$ modulo 1).
They both satisfy  \eqref{eq:def_mu0} with $\mu_0=\delta_{1/2}$.
But it is easy to see that the trees $\lbst{\pts^n_{\mu^1}}$ and $\lbst{\pts^n_{\mu^2}}$ have different limits in the sense of subtree size convergence: 
in particular, one has
\[\lim_{n \to \infty} t(\lbst{\pts^n_{\mu^1}},11)=\tfrac12,\text{ while }\lim_{n \to \infty} t(\lbst{\pts^n_{\mu^2}},11)=0.\]

It is natural to ask whether $\lbst{\pts^n_\mu}$ converges for the subtree size topology as soon as there exists a probability measure $\mu_0$ on $[0,1]$ (possibly with atoms) for which \eqref{eq:def_mu0} holds.
In \Cref{ssec:no_STS} we disprove this, by finding a permuton which satisfies \eqref{eq:def_mu0} with $\mu_0 = \delta_{1/2}$, although $\lbst{\pts^n_\mu}$ does not converge for the subtree size topology.
Thus \assumption{2} is useful not only to identify the subtree size limit, but to guarantee its existence as well.

Like \Cref{thm:height}, \Cref{thm:limit} can be seen as a universality result:
for any permuton satisfying \assumption{2} with $\mu_0=\Leb_{[0,1]}$, the associated BST $\lbst{\pts^n_\mu}$ has the same subtree size limit as that of the BST of a uniform random permutation (see \Cref{ssec:logarithmic height but greater constant} for a non-uniform permuton with $\mu_0$ uniform).
We remark that none of the universality classes for the height or for the subtree size convergence is larger than the other in the following sense:
\begin{itemize}
    \item There are permutons $\mu$ for which $\lbst{\pts^n_\mu}$ has asymptotically the same height as in the uniform case, but a different subtree size limit. 
    An example is the \enquote{Mallows permuton}, as discussed in \Cref{ssec:example_Mallows}.
    \item There are permutons for which $h(\lbst{\pts^n_\mu})$ is asymptotically larger than the uniform case, but the subtree size limits are the same. 
    An example is given in \Cref{ssec:logarithmic height but greater constant}.
\end{itemize}

\begin{remark}
As explained in~\cite[Lemma 1]{grubel2023note}, elements from $\Psi$ are in correspondence with probability measures on the set $\mathbb V_\infty$ of infinite binary words (with the usual $\sigma$-algebra spanned by cylinders):
if $\psi$ is in $\Psi$, we simply associate to it the measure $\nu$ defined by $\nu(B_u)=\psi(u)$ for all $u \in \mathbb V$, where $B_u$ is the set of infinite words starting with $u$. 
Equivalently, one can construct a random word $v$ with law $\nu$ by setting
\begin{align}\label{eq:random sample}
    \prob\big[ v_{k+1}=\lChild~\big|~v_1,\ldots,v_k \big] =\frac{\psi(v_1\cdots v_k\lChild)}{\psi(v_1\cdots v_k)}\,.
\end{align}

Since the limit $\psi_{\mu_0}$ in \Cref{thm:limit} is a {\em random} element of $\Psi$, it can be seen as a {\em random} probability measure $\nu_{\mu_0}$ on $\mathbb V_\infty$.
This random measure is the conditional law of the random word $v$ defined by \eqref{eq:random sample} with $\psi=\psi_{\mu_0}$, given $\psi_{\mu_0}$, or equivalently given the tree $\lT^{\mu_0}$.
Taking a uniform random variable $X$ in $[0,1]$, the right-hand side of \eqref{eq:random sample} is the probability that $X$ is smaller than $\lT^{\mu_0}(v_1,\dots,v_k)$ knowing that it is between $\lTleft^{\mu_0}(v_1\cdots v_k)$ and $\lTright^{\mu_0}(v_1\cdots v_k)$.
Consequently, the random word $v$ corresponds to the infinite insertion procedure of $X$ in $\lT^{\mu_0}$
(since $\lT^{\mu_0}$ is an infinite complete binary tree, if we try to insert $X$ in $\lT^{\mu_0}$, the procedure never stops, but it yields an infinite word of $\lChild$ and $\rChild$ recording whether we visit the left or the right subtree of each node).

We can also construct the random word $v$ without knowing $\lT^{\mu_0}$ as follows.
Informally, the idea is that we build only the branch of $\lT^{\mu_0}$, which is visited in the insertion procedure of $X$.
First set $a_0=0$ and $b_0=1$.
Then for $k \ge 1$, given $a_{k-1},b_{k-1}$, we sample $Y_k$ according to $\mu_0$ conditioned on being in $(a_{k-1},b_{k-1})$ and let $v_k,a_k,b_k$ be as follows:
\begin{enumerate}[(i)]
    \item if $X \le Y_k$ then $v_k=\lChild$, $a_k=a_{k-1}$ and $b_k=Y_k$;
    \item if $X > Y_k$ then $v_k=\rChild$, $a_k=Y_{k}$ and $b_k=b_{k-1}$.
\end{enumerate}
Note that, for each $k$, it holds that $X$ is between $a_k$ and $b_k$, and we have no further information on $X$ at step $k$.
Hence, instead of sampling $X$ in advance, we can alternatively choose at step $k$ item (i) with probability $(Y_k-a_{k-1})/(b_{k-1}-a_{k-1})$, and item (ii) otherwise.

This procedure constructs $v$ without knowing $\lT^{\mu_0}$, so it does not give access to $\nu_{\mu_0}$, which is the conditional law of $v$ knowing $\lT^{\mu_0}$. 
It only gives access to the intensity measure\footnote{
For a random measure $\zeta$ on a measurable space $(S,\mathcal A)$, its {\em intensity measure} $\mathbb E \zeta$ is defined by $(\mathbb E \zeta) (A)=
\mathbb E [\zeta (A)]$ for all $A \in \mathcal A$.} $\mathbb E \nu_{\mu_0}$, which is the (unconditional) law of $v$.
In particular, \Cref{thm:limit} implies that, for any $u$ in $\mathbb V$,
\[ \lim_{n \to +\infty} \mathbb E\big[ t(\lbst{\pts_\mu^n},u) \big] 
= \mathbb E\big[ \psi_{\mu_0}(u) \big] 
= \mathbb E \nu_{\mu_0} (B_u) 
= \mathbb P[v \in B_u] ,\]
where $v$ is the above random word.
\end{remark}

\subsection{Decomposition of BSTs and proof strategies}
\label{ssec:proof-overview}

In this section we present a useful decomposition of the BSTs drawn from permuton samples, 
and provide an overview of the proofs of Theorems~\ref{thm:height} and \ref{thm:limit}.

\subsubsection*{Decomposing a BST from a permuton sample}

A basic idea in this work consists in decomposing the BST drawn from a permuton sample, as a top tree to which hanging trees are attached.
To this end, we first consider the $K$ left-most points in the sample $\pts^n_{\mu}$ 
(i.e.~the $K$ points with smallest $x$-coordinates).
These $K$ points are the first ones to be inserted in the construction of $\lbst{\pts^n_{\mu}}$ and are therefore inserted at the top of the tree.
We will refer throughout the paper to the part of $\lbst{\pts^n_{\mu}}$ corresponding to these first $K$ points as the {\em top tree}.
The labels in the top tree correspond to the $y$-coordinates $y_{(1)},\dots,y_{(K)}$ of these first $K$ points.
Now, consider the subdivision $I_1,\dots,I_{K+1}$ of $[0,1]$ induced by these numbers $y_{(1)},\dots,y_{(K)}$.
In the construction of $\lbst{\pts^n_{\mu}}$, further points $(x,y)$ will be inserted between some pair of consecutive vertices in the top tree, depending on the index $j$ such that $y \in I_j$.
Hence the tree $\lbst{\pts^n_{\mu}}$ is obtained by grafting to the top tree
one subtree for each interval $I_j$; see Figure~\ref{fig:bands}.
These trees will be refered to as the {\em hanging trees}.

\begin{figure}[htb]
    \centering
    \includegraphics[scale=0.55]{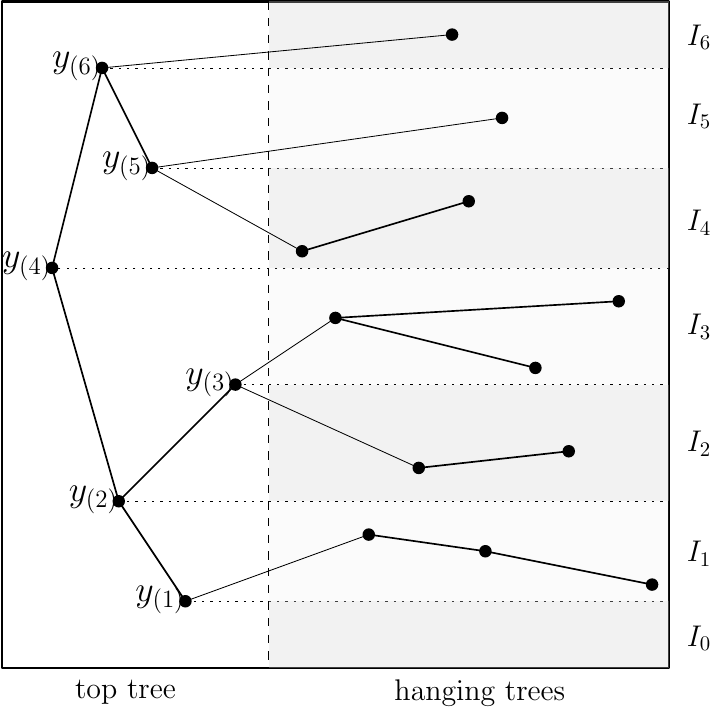}
    \caption{A sample of points and its associated BST, decomposed as top and hanging trees
    (for $K=6$).
    The BST has been rotated of 90 degrees to the left, so that it can be drawn directly on
    the set of points.}
    \label{fig:bands}
\end{figure}

\subsubsection*{Subtree size convergence}

From this decomposition, the proof of the subtree size convergence is relatively simple.
We take $K$ large, but independent of $n$.
First of all we prove that, under the regularity Assumption~\assumption{2}, 
the first $K$ points look like i.i.d.~random variables sampled from the left distribution $\mu_0$ (see Proposition~\ref{prop:k points} and Corollary~\ref{corol:k-pts-fixed-n}).
Moreover, since a permuton $\mu$ has by definition uniform projections, the proportion of points in each horizontal band seen in Figure~\ref{fig:bands} is asymptotically given by the height of the band.
By choosing $K$ large enough, this gives us the proportion of nodes in all subtrees until any given depth, thus proving the subtree size convergence 
(see Lemma~\ref{lem:subtree-proporitions_in_BSTs} and the proof of Theorem~\ref{thm:limit} thereafter).

\subsubsection*{Height universality}

To prove Theorem~\ref{thm:height}, we use the same decomposition of the BST into a top tree and hanging trees, but this time we take $K=\beta n + {\o_\prob(n)}$ with $\beta$ small but independent from $n$.
The heights of the top tree and hanging trees are controlled via different approaches.

For the top tree, our Assumption~\assumption{1} (specifically the continuity and positivity of the density near the left edge) ensures the following: 
for $\beta$ small enough, the density restricted to the left vertical band $[0,\beta] \times [0,1]$ is close, up to a multiplicative factor, to a density depending only on the $y$-coordinate.
It is easy to see that if the density only depended on $y$, then the associated permutation would be uniformly distributed, and thus the BST would have height $(c^*+o_\prob(1))\log K=(c^*+o_\prob(1))\log n$.
We then use comparison arguments to prove that the top tree also has height $(c^*+o_\prob(1))\log n$ under Assumption~\assumption{1}.
Such comparison arguments need to be handled carefully, since adding a single point to a point set can halve the height of the associated BST (see Remark~\ref{rem:height and points}).
On the other hand, we show that removing a point cannot decrease this height by more than $1$ (Lemma~\ref{lem:chain trick}), and use this to control the height of the top tree.

It remains to argue that the hanging trees all have height $o_\prob(\log n)$.
For $K=\beta n + {\o_\prob(n)}$, the horizontal bands in Figure~\ref{fig:bands} contain $\O(1)$ points on average, but the largest number of points in a band is actually $\O(\log n)$ (see Proposition~\ref{prop:tight rows}).
The hanging trees are themselves BSTs of random point sets, and therefore we expect them to have height $\O(\log \log n) \ll \log n$.
However, there are $\O(n)$ hanging trees, and we {need} all of them to have height $o(\log n)$.
To prove this,we need good deviation estimates on the fact that the BST of a point set has a height negligible compared to its size.
Such estimates are provided by Devroye for uniform BSTs \cite[Lemma 3.1]{devroye1986note}, but the monotonicity properties of BSTs are not good enough to use direct comparison arguments.
We solve this difficulty by comparing, for any point set, the height of its BST and the length of its longest monotone subsequences, and then by using monotonicity properties and deviation bounds for the latter (see Lemma~\ref{lem: upper bound LIS LDS} and Corollary~\ref{cor: extreme deviation BST height}).

\subsubsection*{Fixed number of points and Poisson point processes}

As in many results considering point sets, it is often more convenient to work with a Poisson number $N\sim\poisson{n}$ of points, instead of a fixed number $n$. 
Indeed, the point set then becomes a Poisson point process on $[0,1]^2$ with intensity $n\mu$ and gains useful independence properties:
conditionally given the labels $y_{(1)},\dots,y_{(k)}$ in the top tree, the hanging trees are independent from each other.

In Theorem~\ref{thm:Poisson fixed} we provide a general de-Poissonization result, relating the asymptotic height of a BST constructed from a Poisson point process with intensity $n\mu$ to that of $n$ i.i.d.~points with law $\mu$.
The proof of this result uses the comparison lemma already mentioned above (Lemma~\ref{lem:chain trick}), and is made difficult by the fact that we only have a {control} in one direction (recall that adding a single point to a point set can halve the height of the associated BST).

\subsection{Basic probabilistic facts and notation}
Throughout the paper, \enquote{with high probability} (or w.h.p.~for short) means \enquote{with probability tending to $1$, as $n$ tends to $\infty$}.
We also use the notation $X_n=o_\prob(Y_n)$ to say that $X_n/Y_n$ converges to $0$ in probability.

We start by stating a useful lemma, which compares the size of random subsets to binomial random variables.
We write $X\preceq Y$ (resp.~$X\succeq Y$) to denote that $X$ is stochastically smaller (resp.~larger) than $Y$.

\begin{lemma}\label{lem:size dom}
    Let $m$ be an integer and let $i,j< m$. 
    Let $I,J$ be subsets of $\{1,\dots,m\}$ of respective sizes $i,j$, where $I$ is fixed and $J$ is uniformly random. 
    Then the following stochastic domination holds:
    \begin{equation*}
        |I\cap J| \preceq \binomial{i}{\frac{j}{m-i}} 
    \end{equation*}
    where by convention the law $\binomial{n}{p}$ with $p>1$ is the Dirac law at $n$.
\end{lemma}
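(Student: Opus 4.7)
My plan is to expose $|I \cap J|$ as a sum of Bernoulli indicators, compute an explicit upper bound on each conditional inclusion probability, and then conclude via a standard monotone coupling with i.i.d.\ Bernoullis.

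Concretely, I would enumerate $I = \{a_1, \ldots, a_i\}$ arbitrarily and set $X_\ell := \One[a_\ell \in J]$ for $\ell \in \{1, \ldots, i\}$, so that $|I \cap J| = X_1 + \cdots + X_i$. The first key step is to identify the conditional law of $X_\ell$ given the past: writing $K_{\ell-1} := X_1 + \cdots + X_{\ell-1}$, exchangeability of uniform fixed-size random subsets guarantees that, conditionally on $(X_1, \ldots, X_{\ell-1})$, the residual set $J \setminus \{a_1, \ldots, a_{\ell-1}\}$ is uniformly distributed among size-$(j - K_{\ell-1})$ subsets of $\{1, \ldots, m\} \setminus \{a_1, \ldots, a_{\ell-1}\}$. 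Hence
\[
\prob\left[ X_\ell = 1 \,\mid\, X_1, \ldots, X_{\ell-1} \right] \;=\; \frac{j - K_{\ell-1}}{m - \ell + 1} \;\leq\; \frac{j}{m - i + 1} \;\leq\; \frac{j}{m - i},
\]
where the two inequalities use $K_{\ell-1} \geq 0$ and $\ell \leq i$ respectively.

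With this uniform upper bound on the conditional inclusion probabilities, a routine coupling on an enlarged probability space produces i.i.d.\ Bernoulli$(p)$ variables $Y_1, \ldots, Y_i$ with $p := j/(m-i)$ such that $X_\ell \leq Y_\ell$ almost surely for each $\ell$. Summing and using $Y_1 + \cdots + Y_i \sim \binomial{i}{p}$ then yields the stated stochastic domination. The edge case $p \geq 1$ (i.e.\ $j \geq m - i$) is handled automatically by the stated convention, since $|I \cap J| \leq i$ is a trivial deterministic bound. I do not foresee any real obstacle: the only moving piece is checking that the elementary bound $(j - K_{\ell-1})/(m-\ell+1) \leq j/(m-i)$ holds uniformly in the history, which the explicit formula above makes transparent.
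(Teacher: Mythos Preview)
Your proposal is correct and follows essentially the same approach as the paper. The only cosmetic difference is that the paper first swaps the roles of $I$ and $J$ by symmetry (making $J$ fixed and $I$ the uniformly random subset, built by drawing $i$ elements without replacement), whereas you work directly with $I$ fixed and $J$ random; in both cases one arrives at the same conditional bound $(j-K_{\ell-1})/(m-\ell+1)\le j/(m-i)$ and concludes by the standard coupling with i.i.d.\ Bernoullis.
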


\begin{proof}
    By symmetry, $ |I\cap J|$ would have the same distribution if $J$ were fixed and $I$ taken uniformly at random among subsets of size $i$ of $\{1,\dots,m\}$.
    In this situation, $I$ can be constructed by uniformly picking $i$ elements of $\{1,\dots,m\}$ without replacement.
    Each picked element has probability at most $\frac{j}{m-i}$ of being in $J$, which proves the lemma.
\end{proof}

Next, we record some well-known asymptotic estimates for Poisson random variables with large parameters.
If $N$ is a $\poisson{n}$ distributed random variable, then $N/n$ converges to $1$ in distribution and in moments: 
i.e., for any fixed integer $p \ge 1$, we have
\begin{equation}\label{eq:moment_poisson}
\expec[N^{p}]=n^{p}(1+o(1)).
\end{equation}
Furthermore, we have the following tail estimates, easily obtained via Chernoff bounds.
\begin{lemma}\label{lem:Poisson_bounds}
    Let $\lambda>0$ and $X$ be a $\poisson{\lambda}$ distributed random variable.
    Then, for $x \ge \lambda$, we have
    \[ \prob[X \ge x] \le \left(\frac{e\lambda}{x}\right)^xe^{-\lambda}, \]
    while, for $x \le \lambda$, it holds that
    \[ \prob[X \le x] \le \left(\frac{e\lambda}{x}\right)^xe^{-\lambda}. \] 
\end{lemma}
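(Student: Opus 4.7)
The plan is to apply the standard Chernoff bound technique, exploiting the explicit form of the moment generating function of a Poisson distribution, namely $\expec[e^{tX}] = \exp(\lambda(e^t - 1))$ for all $t \in \bR$. Both inequalities will come out with the same right-hand side because the Legendre transform of $t \mapsto \lambda(e^t-1)$ evaluated symmetrically at $x$ yields the rate function $x\log(x/\lambda) - x + \lambda$, which is what appears in the exponent of the claimed bound.

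For the upper tail, I would fix $x \ge \lambda$, take $t \ge 0$, and use Markov's inequality on $e^{tX}$:
\begin{equation*}
    \prob[X \ge x] \;\le\; e^{-tx}\expec[e^{tX}] \;=\; \exp\bigl(\lambda(e^t-1) - tx\bigr).
\end{equation*}
Minimizing the exponent in $t$ gives the first-order condition $\lambda e^t = x$, i.e.\ $t = \log(x/\lambda)$, which is nonnegative exactly because $x \ge \lambda$. Substituting yields
\begin{equation*}
    \prob[X \ge x] \;\le\; \exp\bigl(x - \lambda - x\log(x/\lambda)\bigr) \;=\; e^{-\lambda}\left(\frac{e\lambda}{x}\right)^x,
\end{equation*}
which is the desired bound.

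For the lower tail the approach is symmetric: fix $x \le \lambda$, take $t \ge 0$, and apply Markov to $e^{-tX}$:
\begin{equation*}
    \prob[X \le x] \;=\; \prob[e^{-tX} \ge e^{-tx}] \;\le\; e^{tx}\expec[e^{-tX}] \;=\; \exp\bigl(\lambda(e^{-t}-1) + tx\bigr).
\end{equation*}
Optimizing in $t$ gives $t = \log(\lambda/x) \ge 0$ (legitimate because $x \le \lambda$), and substituting back produces the same exponent $x - \lambda - x\log(x/\lambda)$, hence the same bound $e^{-\lambda}(e\lambda/x)^x$.

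There is no real obstacle here: the statement is a textbook Chernoff estimate, and the only minor point worth flagging is verifying that the optimizer $t$ lies in the admissible range $t \ge 0$ in each case, which is guaranteed by the hypotheses $x \ge \lambda$ and $x \le \lambda$ respectively. Everything else is a one-line computation.
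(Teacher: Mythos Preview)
Your proof is correct and is essentially identical to the paper's: the paper also applies Markov's inequality to $e^{\theta X}$, uses the Poisson moment generating function, and sets $e^\theta = x/\lambda$, then says the lower tail is proved similarly. You have simply spelled out the optimization and the symmetric lower-tail case in slightly more detail.
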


\begin{proof}
Assume $x \ge \lambda$. For $\theta >0$, we have
\[\prob[X \ge x] \le \frac{\expec[e^{\theta X}]}{e^{\theta x}} =e^{\lambda e^{\theta} -\lambda -\theta x},\]
and the first inequality in the lemma follows by setting $e^\theta=x/\lambda$. 
The second one is proved similarly.
\end{proof}

We conclude this section with a variant of the classical Glivenko--Cantelli theorem for triangular arrays.

\begin{prop}\label{prop:glivenko_cantelli_triangular}
Let $\mu$ be a probability measure with distribution function $F(x):=\mu((-\infty,x])$, and a finite fourth moment.
For each $n \ge 1$, let $(X_{i,n})_{1 \le i \le n}$ be i.i.d.~random variables with common distribution $\mu$ and let 
\[F_n(x) := \frac1n\, \big|\{i \le n: \, X_{i,n} \le x\}\big|\]
be their {\em empirical distribution function}. Then, a.s.~it holds that $F_n$ converges uniformly to $F$.
\end{prop}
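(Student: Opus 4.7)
The plan is to establish uniform convergence by combining almost-sure pointwise convergence at finitely many well-chosen points with the monotonicity interpolation underlying the classical Glivenko--Cantelli theorem. The main twist compared to the classical proof is that we deal with a triangular array rather than a single i.i.d.~sequence, so the strong law of large numbers cannot be applied row-wise; instead, I would derive a quantitative fourth-moment tail bound and invoke Borel--Cantelli to obtain the pointwise a.s.~convergence needed.

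First, fix $x \in \bR$ and set $Z_{i,n} := \One_{X_{i,n} \le x} - F(x)$. The variables $Z_{1,n}, \dots, Z_{n,n}$ are independent, centered, and bounded in absolute value by $1$. A direct expansion of $\expec[(Z_{1,n} + \dots + Z_{n,n})^4]$ shows that all cross-terms involving an isolated index vanish by independence, so the expectation reduces to diagonal and paired contributions, each bounded and summed over $O(n^2)$ index choices. This yields $\expec[(F_n(x) - F(x))^4] \le C/n^2$ for some absolute constant $C$, hence $\prob[|F_n(x) - F(x)| > \eps] \le C/(n^2 \eps^4)$ by Markov. This is summable in $n$, so Borel--Cantelli gives $F_n(x) \to F(x)$ a.s.; the same argument applied to the indicators $\One_{X_{i,n} < x}$ yields $F_n(x^-) \to F(x^-)$ a.s.

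Next, given $\eps > 0$, choose finitely many points $-\infty = x_0 < x_1 < \dots < x_K = +\infty$ such that $F(x_{k+1}^-) - F(x_k) \le \eps$ for every $k < K$, which is possible since $F$ is nondecreasing with limits $0$ and $1$. Intersecting the finitely many full-measure events from the previous step applied to $x_1, \dots, x_{K-1}$ and $x_1^-, \dots, x_{K-1}^-$, we obtain an event of full probability on which, eventually, $|F_n(x_k) - F(x_k)| \le \eps$ and $|F_n(x_k^-) - F(x_k^-)| \le \eps$ for every $k$. For any $x \in [x_k, x_{k+1})$, monotonicity of both $F$ and $F_n$ gives
\begin{equation*}
    F_n(x_k) - F(x_{k+1}^-) \,\le\, F_n(x) - F(x) \,\le\, F_n(x_{k+1}^-) - F(x_k),
\end{equation*}
and combining these bounds with $F(x_{k+1}^-) - F(x_k) \le \eps$ yields $\sup_x |F_n(x) - F(x)| \le 2\eps$ for all $n$ large enough. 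Letting $\eps \to 0$ along a countable sequence concludes the proof.

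The only real subtlety is indeed the triangular structure, which rules out a direct use of the strong law of large numbers and forces the quantitative Borel--Cantelli step above; the rest is a verbatim adaptation of the classical Glivenko--Cantelli argument. I note in passing that the finite fourth-moment assumption on $\mu$ does not actually enter this argument, since the indicators $\One_{X_{i,n} \le x}$ are automatically bounded; the hypothesis is presumably included for coherence with the applications of this lemma elsewhere in the paper.
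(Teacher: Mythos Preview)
Your proposal is correct and follows essentially the same approach as the paper: a fourth-moment/Borel--Cantelli argument for pointwise a.s.~convergence, followed by the standard monotonicity interpolation from the classical Glivenko--Cantelli proof. Your closing observation is also right: the fourth-moment hypothesis on $\mu$ is not actually used, since the relevant fourth-moment bound concerns the bounded indicators $\One_{X_{i,n}\le x}$ rather than the $X_{i,n}$ themselves.
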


\begin{proof}
A classical fourth moment computation, together with Borel--Cantelli lemma 
--- see e.g.~\cite[Theorem 6.1]{billingsley2012probability_and_measure} --- 
shows that, for any fixed $x$, $F_n(x)$ converges a.s.~to $F(x)$.
The rest of the proof is similar to that of the classical Glivenko--Cantelli theorem which considers a single sequence $X_i$ of i.i.d.~random variables instead of a triangular array, but does not require a fourth moment condition;
see e.g.~\cite[Theorem 20.6]{billingsley2012probability_and_measure}.
\end{proof}

\section{Subtree size convergence}
\label{sec:ssc-proof}

\subsection{Convergence of the first elements}

In the following proposition, $\mu$ is a permuton and $\mu_0$ a measure on $[0,1]$.
Note that, at this stage, we do not need to assume that $\mu_0$ has no atoms as in~\assumption{2}.
For each $n$, we let $N$ be a $\poisson{n}$ distributed random variable and $\pts^N= \left\{ \left(X^N_i,Y^N_i\right) \right\}_{i\le N}$ be i.i.d.~random variables with distribution $\mu$ (we drop the subscript $\mu$ for clarity);
equivalently, $\pts^N$ is a Poisson point process on $[0,1]^2$ with intensity $n \mu$.
Also let $\left( \big(X^N_{(i)},Y^N_{(i)}\big) \right)_{i\le N}$ be its reordering such that $X^N_{(1)} < \dots < X^N_{(N)}$.

\begin{prop}\label{prop:k points}
    The following statements are equivalent:
    \begin{enumerate}[(i)]
    \item \label{item:cv_to_mu0} we have the weak convergence of measures        
    \begin{equation*}
        \lim_{x\rightarrow0^+} \, \tfrac{1}{x}\, \mu\big([0,x]\times\cdot\big) = \mu_0\,;
    \end{equation*}
    \item \label{item:cv_few_first_Y} for any fixed $K \ge 1$, we have the following convergence in distribution in $\mathbb R^K$:
    \begin{equation*}
        \left( Y_{(1)}^N, \dots, Y_{(K)}^N \right) \underset{n\rightarrow\infty}{\longrightarrow}(Y_k)_{1\leq k\leq K}\,,
    \end{equation*}
    where $(Y_k)_{1\leq k\leq K}$ is a sequence of $K$ i.i.d.~random variables distributed according to $\mu_0$.
    \item \label{item:cv_first_Y} the random variable $Y_{(1)}^N$ converges in distribution to a random variable $Y_1$ with law $\mu_0$.
    \end{enumerate}
\end{prop}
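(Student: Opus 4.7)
The plan is to exploit the structure of the Poisson point process $\pts^N$. Two elementary facts will be central. First, since $\mu$ is a permuton, the $x$-coordinates of $\pts^N$ form a Poisson point process on $[0,1]$ with intensity $n$; after rescaling by $n$, the ordered sequence $(n X_{(k)}^N)_{k\ge 1}$ converges to the arrival times of a standard Poisson process on $[0,\infty)$. Second, conditionally on the $x$-coordinates, the corresponding $y$-coordinates are independent, with $Y_{(k)}^N$ at position $X_{(k)}^N = x$ distributed according to $\mu(\cdot \mid X = x)$. The implication from (ii) to (iii) is then immediate by projecting onto the first coordinate.

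For (i) $\Rightarrow$ (ii), I would work with Laplace functionals of the rescaled marked point process $\{(n X_{(k)}^N, Y_{(k)}^N)\}_{k \ge 1}$. Fix $T > 0$ and a bounded continuous $\phi \colon [0, T] \times [0, 1] \to [0, \infty)$. Combining the Poisson exponential formula with the two facts above gives
\begin{equation*}
    \expec\left[\exp\left(-\sum_{k \,:\, n X_{(k)}^N \le T} \phi(n X_{(k)}^N, Y_{(k)}^N)\right)\right]
    = \exp\left(-n \int_0^{T/n} \!\!\int_0^1 \bigl(1 - e^{-\phi(nx, y)}\bigr)\, \mu(dx\,dy)\right).
\end{equation*}
Introducing the measure $\mu_n^T$ on $[0, T] \times [0, 1]$ defined by $\mu_n^T(A \times B) := n\, \mu((A/n) \times B)$, Assumption (i) readily implies $\mu_n^T \Rightarrow \Leb_{[0,T]} \otimes \mu_0$ by checking convergence on rectangles of the form $[0, a] \times B$. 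The right-hand side therefore converges to the Laplace functional of a Poisson point process on $[0, T] \times [0, 1]$ with intensity $du \otimes \mu_0(dy)$, and standard point-process arguments (convergence of Laplace functionals on nice test functions implies vague convergence of the random measures) yield joint convergence of the first $K$ points to i.i.d.\ $\mu_0$-samples in the $y$-marks.

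For (iii) $\Rightarrow$ (i), I reduce to a Tauberian argument. For any bounded continuous $g \colon [0, 1] \to [0, \infty)$, set $h_g(x) := \int_0^1 g(y)\, \mu(dy \mid X = x)$ and $U_g(x) := \int_0^1 g(y)\, \mu([0, x] \times dy) = \int_0^x h_g(x')\, dx'$, which is nondecreasing with $U_g(0) = 0$. Using the density $n e^{-nx}/(1 - e^{-n})$ of $X_{(1)}^N$ conditional on $\{N \ge 1\}$, one computes
\begin{equation*}
    \expec\bigl[g(Y_{(1)}^N) \,\big|\, N \ge 1\bigr]
    = \frac{1}{1 - e^{-n}}\int_0^1 h_g(x)\, n\, e^{-nx}\, dx
    = \frac{n\, \tilde{U}_g(n)}{1 - e^{-n}},
\end{equation*}
where $\tilde{U}_g$ denotes the Laplace--Stieltjes transform of $U_g$. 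By (iii), the left-hand side tends to $\int g\, d\mu_0$. Since $U_g$ is nondecreasing, Karamata's Tauberian theorem (with exponent $\rho = 1$) yields the equivalence
\begin{equation*}
    n\, \tilde{U}_g(n) \xrightarrow[n\to\infty]{} L \iff \frac{U_g(x)}{x} \xrightarrow[x\to 0^+]{} L,
\end{equation*}
and therefore $\tfrac{1}{x}\mu([0, x] \times \cdot) \to \mu_0$ weakly (for general bounded continuous $g$, write $g = g_+ - g_-$).

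The main obstacle lies in this last Tauberian step: (iii) only encodes an Abel-type mean of the conditional distributions $\mu(\cdot \mid X = x)$ near $x = 0$, whereas (i) demands the finer Cesàro-type convergence. The monotonicity of $U_g$ for nonnegative $g$ is exactly the Tauberian condition that allows one to reverse the Abelian implication. Everything else --- including the negligible atom at $\{N = 0\}$ and the weak convergence checks for $\mu_n^T$ --- is routine, although some care is needed to keep the exponentially small errors controlled uniformly in $g$.
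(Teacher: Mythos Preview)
Your proof is correct, and the implication (iii) $\Rightarrow$ (i) is essentially the same Tauberian argument as in the paper (the paper tests against indicators $\One_A$ of continuity sets rather than continuous $g\ge 0$, and cites Feller for the Tauberian step, but the substance is identical; the paper also notes explicitly the passage from integer $n$ to real $s$, which you sweep into ``routine'').

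The implication (i) $\Rightarrow$ (ii), however, is handled quite differently. The paper gives a bare-hands argument: it fixes a small $\eps$, slices the strip $[0,\tfrac{1}{\eps n})\times[0,1]$ into $L=\lfloor \eps^{-3}\rfloor$ thin columns, shows that with high probability each column holds at most one point while the whole strip holds at least $K$, and then computes $\prob[\,Y_{(k)}^N\in I_k\text{ for all }k\le K\,]$ directly as a finite sum over column configurations, squeezing it between $c(\eps)\prod_k(\mu_0(I_k)\pm 2\eps)$. Your approach is cleaner and more conceptual: you package everything into convergence of the Laplace functional of the rescaled marked process $\{(nX_{(k)}^N,Y_{(k)}^N)\}$ towards that of a Poisson process on $[0,T]\times[0,1]$ with intensity $du\otimes\mu_0(dy)$, from which the joint law of the first $K$ marks is read off by standard point-process theory. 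What the paper's approach buys is self-containment (no appeal to general point-process convergence machinery), while yours buys brevity and makes the underlying limit object explicit. One small caution: your final step ``standard point-process arguments \dots\ yield joint convergence of the first $K$ points'' is correct but is exactly where the work lies --- vague convergence of the random measures must be upgraded to joint convergence of the ordered points, which requires checking that the limiting process almost surely has no two points sharing a first coordinate and no point at the boundary $\{u=T\}$; both are immediate here, but worth a sentence.
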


\begin{proof}
    We first prove that \eqref{item:cv_to_mu0} implies \eqref{item:cv_few_first_Y}.
    Let $I_1,\dots,I_K$ be intervals of $[0,1]$ whose boundaries contain no atom of $\mu_0$.
    We aim to show that:
    \begin{equation}\label{cv_voulue}
        \prob\left[ \forall k\leq K,\; Y_{(k)}^N\in I_k \right]
        \underset{n\rightarrow\infty}{\longrightarrow}
        \mu_0(I_1)\dots\mu_0(I_K).
    \end{equation}
    Fix $\eps>0$ and set $L := \lfloor 1/{\eps^3} \rfloor$.
    By taking $\eps$ small enough, we can assume that $L\ge K$.
    Using the assumption \eqref{eq:def_mu0} and the Portmanteau theorem, there exists $x_0>0$ such that for any $x\leq x_0$ and any $k\le K$:
    \begin{equation}\label{controle_cvloi}
        \left| \frac{1}{x}\mu\big( [0,x)\times I_k \big)
        - \mu_0\left( I_k \right) \right| \leq \eps / L\,,
    \end{equation}
    Set $n_0 := \lceil \frac{1}{\eps x_0} \rceil$ and consider any $n\geq n_0$.
    For all $0\le i\le L-1$ and $1\le k\le K$, define the following  blocks and columns:
    \begin{equation*}
        B_{i,k} := \left[ \frac{i}{L\eps n} , \frac{i+1}{L\eps n} \right) \times I_k
        \quad\text{and}\quad
        C_i :=  \left[ \frac{i}{L\eps n} , \frac{i+1}{L\eps n} \right) \times [0,1].
    \end{equation*}
    Using $\frac{1}{\eps n}\leq x_0$ and \eqref{controle_cvloi}, we have for each $0\le i\le L-1$ and $1\le k\le K$:
    \begin{align}\label{approx_box_mass}
        &\left| \mu\left( B_{i,k} \right) - \frac{1}{L\eps n}\mu_0(I_k) \right|
        \\\nonumber&\leq \left| \mu\left( \left[ 0 , \frac{i+1}{L\eps n} \right) \times I_k \right) - \frac{i+1}{L\eps n}\mu_0(I_k) \right|
        + \left| \frac{i}{L\eps n}\mu_0(I_k) - \mu\left( \left[ 0 , \frac{i}{L\eps n} \right) \times I_k \right) \right|
        \\&\nonumber \leq \frac{i+1}{L\eps n}\frac{\eps}{L} + \frac{i}{L\eps n}\frac{\eps}{L}
        \\& \leq \frac{2}{n L}.\nonumber
    \end{align}
    Now define the events
    \begin{equation*}
        E_1 := \Big\{ \forall i\leq L-1,\; \left| \pts^N \cap C_i \right| \leq 1 \Big\}
        \quad\text{and}\quad
        E_2 := \left\{ \left| \bigcup_{0\leq i\leq L-1}C_i \cap \pts^N \right| \geq K \right\}.
    \end{equation*}
    In words, $E_1$ is the event that {each one of} the first $L$ columns of width $1/L\eps n$ contains at most one point, while $E_2$ is the event that there are at least $K$ points in total in all those columns, that is in the set $[0,1/\eps n)\times[0,1]$.
    Each $\left| \pts^N \cap C_i \right|$ follows a Poisson law of parameter $n\mu(C_i)=\frac{1}{L\eps}$, thus 
    \begin{align*}
        1-\prob[E_1] = \prob\Big[ \exists i\leq L-1,\; |\pts^N\cap C_i| \geq 2 \Big]
        \leq L \left( 1 - e^{-\frac{1}{L\eps}} - \frac{1}{L\eps}e^{-\frac{1}{L\eps}} \right)\,.
    \end{align*}
    Recalling that $L=\lfloor1/\eps^3\rfloor$, it follows that
    \begin{align*}
        \prob[E_1]\geq1-\O\left(\frac{1}{L\eps^2}\right)=1-\O(\eps)\,,
    \end{align*}
    where the constant in $\O(\cdot)$ is  independent of $n$ and $\eps$.
    Likewise, $\left|\left(\cup_{i}C_i\right)\cap \pts^N \right|$ follows a Poisson law of parameter $1/\eps$, so $\prob[E_2]\geq1-o(1)$ as $\eps\rightarrow0$, for any fixed $K$ and uniformly in $n$.
    We conclude that the event $E:=E_1 \cap E_2$ satisfies $\prob[E] \geq 1-\delta(\eps)$,
    where $\lim_{\eps\rightarrow0}\delta(\eps)=0$ and $\delta$ does not depend on $n$.
    
    Under the event $E$, each column $C_i$ contains at most one point, but all the columns together contain at least $K$ points.
    This means that the column indices $i_1,\dots,i_K$ of the $K$ left-most points of $\pts^N$ satisfy $0\leq i_1<\ldots<i_K\leq L-1$.
    Thus, we get
    \begin{equation*}
        \prob\left[ \left\{ \forall k\leq K,\; Y_{(k)}^N\in I_k \right\} \cap E \right]
        =\sum_{0\leq i_1<\dots<i_K\leq L-1} \prob\left[ \left\{ \forall k\leq K ,\; \left( X_{(k)}^N,Y_{(k)}^N \right) \in B_{i_k,k} \right\} \cap E \right] .
    \end{equation*}
    The latter event is equivalent to the conjunction of four facts:
        i) for each $k\le K$, $B_{i_k,k}$ contains exactly $1$ point from $\pts^N$;
        ii) for each $k\le K$, the remainder $C_{i_k} \setminus B_{i_k,k}$ of the column $C_{i_k}$ contains no point;
        iii) the columns $C_i$ with $i \le i_K$ and $i \notin \{i_1,\dots,i_K\}$ are empty; and,
        iv) each column $C_{i}$ with $i_K<i \le L-1$ contains at most $1$ point.
   In a Poisson point process, the number of points in disjoint sets are independent Poisson random variables, so we get
    \begin{multline*}
        \prob\left[ \left\{ \forall k\leq K,\; Y_{(k)}^N\in I_k \right\} \cap E \right]
         = \sum_{i_1<\dots<i_K} 
        \Bigg[\prod_{k=1}^K n\mu\left(B_{i_k,k}\right)e^{-n\mu(B_{i_k,k})} e^{-n(\mu(C_{i_k})- \mu(B_{i_k,k}) )}  \\
        \cdot 
        \prod_{\substack{i \le i_K \\
        i \notin \{i_1,\dots,i_K\}}} e^{-n\mu(C_i)} 
        \prod_{i=i_K+1}^{L-1} e^{-n\mu(C_i)} \big(1 + n\mu(C_i)\big)\Bigg]\,.
        \end{multline*}
    Combining all the exponential terms simplifies to $e^{-n \mu([0,\frac{1}{n \eps})\times [0,1])} = e^{-1/\eps}$.
    Moreover, for each $i>i_K$, we have $\mu(C_i)=\frac1{L\eps n}$. Therefore, we get
    \begin{align*}
    \prob\left[ \left\{ \forall k\leq K,\; Y_{(k)}^N\in I_k \right\} \cap E \right] = n^K e^{-1/\eps}
    \sum_{i_1<\dots< i_K}
    \left( 1 + \frac{1}{L\eps} \right)^{L-i_K-1}\prod_{k=1}^K \mu\left(B_{i_k,k}\right)\,.
    \end{align*}
    Using \eqref{approx_box_mass}, we deduce that
    \begin{align}\label{upper_boxes_proba}
      & \prob\left[ \left\{ \forall k\leq K,\; Y_{(k)}^N\in I_k \right\} \cap E \right] 
        \\\nonumber&\leq n^K e^{-1/\eps}
        \sum_{i_1<\dots< i_K}
        \left( 1 + \frac{1}{L\eps} \right)^{L-i_K-1}\prod_{k=1}^K\left(\frac{1}{L\eps n}\mu_0(I_k)+\frac{2}{nL}\right)
        \\\nonumber&\leq c(\eps) \prod_{k=1}^K \Big( \mu_0(I_k) + 2\eps \Big)
    \end{align}
    where
    \begin{equation}\label{eq:def c}
        c(\eps):= \left(\frac{1}{L\eps}\right)^K e^{-1/\eps} \sum_{i_K=K-1}^{L-1} 
        \left( 1 + \frac{1}{L\eps} \right)^{L-i_K-1}
        \genfrac(){0pt}{}{i_K}{K-1}.
    \end{equation}
   Likewise
    \begin{align}\label{lower_boxes_proba}
        \prob\left[ \left\{ \forall k\leq K,\; Y_{(k)}^N\in I_k \right\} \cap E \right] 
        \geq c(\eps) \prod_{k=1}^K \Big(\big(\mu_0(I_k) - 2\eps\big)_+\Big),
    \end{align}
    where we use the notation $x_+=\max(x,0)$. 
    Moreover, this reasoning can be applied with each $I_k$ being the whole interval $[0,1]$, yielding:
    \begin{equation*}
        c(\eps) \left( 1 - 2\eps \right)^K
        \leq \prob(E) \leq
        c(\eps) \left( 1 + 2\eps \right)^K \,.
    \end{equation*}
    Since $1-\delta(\eps) \le \prob(E) \leq 1$, we get the following  bounds for $c(\eps)$:
    \begin{equation*}
        \frac{1-\delta(\eps)}{(1+2\eps)^K} \leq c(\eps) \leq \frac{1}{(1-2\eps)^K} \,.
    \end{equation*}
    In particular, this implies that $\lim_{\eps\rightarrow0}c(\eps)=1$.
    Using \eqref{upper_boxes_proba}, we deduce that for any $n\geq n_0$:
    \begin{equation*}
        \prob\left[ \forall k\leq K,\; Y_{(k)}^N\in I_k \right]
        \le \prob\left[ \left\{ \forall k\leq K,\; Y_{(k)}^N\in I_k \right\} \cap E \right] + \delta(\eps)
        \le c(\eps) \prod_{k=1}^K \big( \mu_0(I_k) + 2\eps \big) + \delta(\eps) \,,
    \end{equation*}
    and likewise with \eqref{lower_boxes_proba}:
    \begin{equation*}
        \prob\left[ \forall k\leq K,\; Y_{(k)}^N\in I_k \right] 
        \geq \prob\left[ \left\{ \forall k\leq K,\; Y_{(k)}^N\in I_k \right\} \cap E \right] 
        \geq c(\eps) \prod_{k=1}^K \Big(\big( \mu_0(I_k) - 2\eps \big)_+\Big) \,.
    \end{equation*}
    Both bounds converge to $\mu_0(I_1)\cdots\mu_0(I_K)$ when $\eps\rightarrow0$, proving~\eqref{cv_voulue}. 
    This concludes the proof that \eqref{item:cv_to_mu0} implies \eqref{item:cv_few_first_Y}.

    The fact that \eqref{item:cv_few_first_Y} implies \eqref{item:cv_first_Y} is trivial.
    Let us thus conclude with the proof that \eqref{item:cv_first_Y} implies \eqref{item:cv_to_mu0}.
    By the measure disintegration theorem, there exists a collection of measures $(\tilde\mu_x)_{x \in [0,1]}$ such that for any Borel set $B \subseteq [0,1]^2$, we have
    $\mu(B) = \int_0^1 \tilde\mu_x(B_x) dx$, where $B_x=\{y:(x,y) \in B\}$.
    Informally, if $(X,Y)$ has distribution $\mu$, then $\tilde\mu_x$ is the law of $Y$ knowing that $X=x$.
    By construction, $X_{(1)}^N$ is the smallest element in $\{X_1^N,\dots,X_N^N\}$.
    In particular,
    \[\mathbb P\big[ X_{(1)}^N \ge x \big] =  \mathbb P\Big[ \mathcal P^N \cap ([0,x] \times [0,1]) = \emptyset \Big]
    = \exp\big(-n\mu([0,x] \times [0,1]) \big) = \exp(-nx).\]
    Thus $X_{(1)}^N$ has density $n \exp(-nx)$.
    Conditionally to $X_{(1)}^N=x$, the random variable $Y_{(1)}^N$ has law $\tilde\mu_x$.
    Summing up, for every Borel set $A \subseteq [0,1]$, we have
    \[\mathbb P\big[ Y_{(1)}^N \in A \big] =\mathbb E\Big[\mathbb P\big[ Y_{(1)}^N \in A \,\big|\, X_{(1)}^N \big]\Big] =
    \mathbb E\Big[ \tilde\mu_{X_{(1)}^N}(A) \Big] = \int_0^1 \tilde\mu_x(A) \, n \exp(-nx) dx.\]
    Setting $f_A(x)=\tilde\mu_x(A) \One_{[0,1]}(x)$ and considering its Laplace transform $Lf_A(s):=\int_0^1 e^{-xs} f_A(x)dx$, the above formula writes $\mathbb P\big[ Y_{(1)}^N \in A \big] =n \, Lf_A(n)$.

    Now, Item \eqref{item:cv_first_Y} tells us that, for any continuity set $A$ for $\mu_0$, we have $\lim_{n \to +\infty} \mathbb P\big[ Y_{(1)}^N \in A \big]=\mu_0(A)$, or equivalently $Lf_A(n) \sim \mu_0(A)\, n^{-1}$ for large integers $n$. 
    It is straightforward to extend this estimate to large real numbers $s$, i.e.~we have $Lf_A(s) \sim \mu_0(A)\, s^{-1}$ for large $s$.
    By \cite[Theorem 3 page 445]{Feller}, this implies $\int_0^x f_A(y) dy \sim \mu_0(A) x$ when $x$ tends to $0^+$. 
    But for $x \le 1$, the integral $\int_0^x f_A(y)dy$ is simply $\int_0^x \tilde\mu_y(A)dy=\mu\big([0,x] \times A\big)$.
    Summing up, we get
    \[ \lim_{x \to 0^+} \tfrac1x \mu\big([0,x] \times A\big) = \mu_0(A).\]
    Since this holds for any continuity set $A$ for $\mu_0$, this proves Item \eqref{item:cv_to_mu0}.
\end{proof}

We now \enquote{de-Poissonize} the previous result.
{We use the notation of Section~\ref{ssec:permuton-sample}, namely $(X_i^n,Y_i^n)_{i \le n}$ is an i.i.d.~sample of fixed size $n$ and common distribution $\mu$, and $(X^n_{(i)},Y^n_{(i)})_{i \le n}$ is its reordering with increasing $x$-coordinates.}

\begin{cor}
\label{corol:k-pts-fixed-n}
    The following statements are equivalent:
    \begin{enumerate}[(i)]
        \item \label{item:cv_to_mu0_n} we have the weak convergence of measures
        \begin{equation*}
        \lim_{x\rightarrow0^+} \frac{1}{x} \mu\big([0,x]\times\cdot\big) = \mu_0\,;
        \end{equation*}
    \item \label{item:cv_few_first_Y_n} for any fixed $K \ge 1$, we have the following convergence in distribution in $\mathbb R^k$:
        \begin{equation*}
        \left( Y_{(1)}^n, \dots, Y_{(K)}^n \right) \underset{n\rightarrow\infty}{\longrightarrow}(Y_k)_{1\leq k\leq K}\,,
        \end{equation*}
    where $(Y_k)_{1\leq k\leq K}$ is a sequence of $K$ i.i.d.~random variables distributed according to $\mu_0$.
    \item \label{item:cv_first_Y_n} the random variable $Y_{(1)}^n$ converge in distribution to a random variable $Y_1$ with law $\mu_0$.
    \end{enumerate}
\end{cor}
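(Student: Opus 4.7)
The plan is to bootstrap \Cref{corol:k-pts-fixed-n} from \Cref{prop:k points} via a coupling between the fixed-size and Poisson-size samples. Statement \eqref{item:cv_to_mu0_n} is literally identical to \eqref{item:cv_to_mu0}, and \eqref{item:cv_few_first_Y_n} trivially implies \eqref{item:cv_first_Y_n}. It thus remains to establish \eqref{item:cv_to_mu0_n}$\Rightarrow$\eqref{item:cv_few_first_Y_n} and \eqref{item:cv_first_Y_n}$\Rightarrow$\eqref{item:cv_to_mu0_n}, both of which I would obtain by transferring the fixed-$n$ distributional statements to their Poisson counterparts and then invoking \Cref{prop:k points}.

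First I would set up the coupling. Let $(P_i)_{i\ge 1}$ be an infinite i.i.d.~sample with law $\mu$, and let $N \sim \poisson{n}$ be independent of it; set $\pts^n := \{P_1,\dots,P_n\}$ and $\pts^N := \{P_1,\dots,P_N\}$. Then $\pts^n$ has the law of the fixed-$n$ sample, while $\pts^N$ is a Poisson point process of intensity $n\mu$ on $[0,1]^2$, as in \Cref{prop:k points}. The key claim is that, for any fixed $K \ge 1$,
\begin{equation*}
    \prob\Big[\big(Y^n_{(1)},\dots,Y^n_{(K)}\big) \neq \big(Y^N_{(1)},\dots,Y^N_{(K)}\big)\Big] \underset{n\to\infty}{\longrightarrow} 0.
\end{equation*}

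To prove the claim, denote by $L^m_K \subseteq \{1,\dots,m\}$ the set of indices of the $K$ leftmost points of $\{P_1,\dots,P_m\}$. Conditionally on $N$, the $x$-coordinates of $P_1,\dots,P_N$ are exchangeable and a.s.~distinct, so $L^N_K$ is uniformly distributed among $K$-subsets of $\{1,\dots,N\}$. On the event $\{N \ge n \ge K\}$, the two ordered $K$-tuples coincide as soon as $L^N_K \subseteq \{1,\dots,n\}$, which has conditional probability $\binom{n}{K}/\binom{N}{K}$; the symmetric statement holds on $\{N < n\}$. Since $(N-n)/\sqrt{n}$ is tight by standard Poisson concentration, both ratios tend to $1$ in probability, which yields the key claim.

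Given the claim, $(Y^n_{(1)},\dots,Y^n_{(K)})$ and $(Y^N_{(1)},\dots,Y^N_{(K)})$ share the same weak limits (using Portmanteau and the vanishing disagreement probability). Applied with $K=1$ this gives \eqref{item:cv_first_Y_n}$\Leftrightarrow$\eqref{item:cv_first_Y}, and applied with general $K$ this gives \eqref{item:cv_few_first_Y_n}$\Leftrightarrow$\eqref{item:cv_few_first_Y}. Chaining with the equivalences from \Cref{prop:k points} closes the implication loop. The main obstacle is the careful bookkeeping of the coupling across the events $\{N \ge n\}$ and $\{N < n\}$; no new probabilistic ingredient beyond Poisson concentration and exchangeability is required.
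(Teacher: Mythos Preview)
Your proposal is correct and follows essentially the same coupling approach as the paper: both build the fixed-$n$ and Poisson samples from a common i.i.d.\ sequence and use exchangeability to show that the $K$ left-most points coincide with high probability, then invoke \Cref{prop:k points}. The only cosmetic difference is that the paper shifts the Poisson intensity to $n+n^{2/3}$ so that $\{N\ge n\}$ holds w.h.p.\ and only one inclusion needs to be checked, whereas you keep $N\sim\poisson{n}$ and handle the two cases $\{N\ge n\}$ and $\{N<n\}$ symmetrically.
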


\begin{proof}
    For each $n\in\bN$, let $\pts^N = \big( (X_1^N,Y_1^N),\dots,(X_N^N,Y_N^N) \big)$ be a Poisson point process of intensity $(n+n^{2/3})\mu$, {listed in a uniform random order}.
    Since the number $N$ of points follows a $\poisson{n+n^{2/3}}$ law, for large $n$, it has fluctuations of order $\sqrt n$ around its mean value $n+n^{2/3}$. 
    In particular, the event $\{N\geq n\}$ happens w.h.p.~as $n$ goes to infinity.
    Conditionally under this event, $\left\{ (X_1^N,Y_1^N),\dots,(X_n^N,Y_n^N) \right\}$ is a family of $n$ i.i.d.~random points distributed under $\mu$.
    We denote by $(X_{(1)}^N,Y_{(1)}^N),\dots,(X_{(N)}^N,Y_{(N)}^N)$ and $(X_{(1)}^n,Y_{(1)}^n),\dots,(X_{(n)}^n,Y_{(n)}^n)$ the reorderings, by increasing $x$-coordinate, of $\left\{ (X_1^N,Y_1^N), \dots, (X_N^N,Y_N^N) \right\}$ and  $\left\{ (X_1^N,Y_1^N), \dots, (X_n^N,Y_n^N) \right\}$ respectively.
    Let $\tau$ be the unique permutation of size $N$ satisfying $X_{(i)}^N = X_{\tau(i)}^N$ for all $1\le i \le N$.
    Since $\tau$ is uniformly random and since $N \le n+2n^{2/3}$ w.h.p., the event $\{ \tau(1)\leq n ,\dots, \tau(K)\leq n \}$ happens w.h.p.~as $n$ goes to infinity.
    {Informally, this event means that the $K$ left-most points of the whole Poisson point process $\pts^N$ belong to the subset of its first $n$ points.}
    Conditionally under this event, one has:
    \begin{equation*}
        \left( \big(X_{(1)}^n,Y_{(1)}^n\big) ,\dots, \big(X_{(K)}^n,Y_{(K)}^n\big) \right) = \left( \big(X_{(1)}^N,Y_{(1)}^N\big) ,\dots, \big(X_{(K)}^N,Y_{(K)}^N\big) \right) .
    \end{equation*}
    \Cref{corol:k-pts-fixed-n} then follows from \Cref{prop:k points}.
\end{proof}

\subsection{Proof of subtree size convergence}

Recall from \Cref{ssec:result-stscv} that for a finite tree $\lT$ and a node $u$ in $\rV$, we denote by $t(\lT,u)$ the proportion of nodes of $\lT$ which are descendants of $u$ (including $u$ itself).
In a binary search tree, this can be computed as follows.

\begin{lemma}\label{lem:subtree-proporitions_in_BSTs}
Let $y_1,\dots,y_n$ be distinct numbers and let $\lT:=\lbst{y_1,\dots,y_n}$ be the corresponding BST.
Let $u$ be a node in $\lT$ and let $k$ be such that $\lT(u)=y_k$.
Then we have
\begin{equation}\label{eq:subtree-proporitions_in_BSTs}
    t(\lT,u) 
    = \frac{1}{|\lT|} \, \Big| \big\{y_k,\dots,y_n\big\} 
    \cap \big( \lTleft(u),\lTright(u) \big) \Big| \,, 
\end{equation}
where $\lTleft$ and $\lTright$ {are extended to finite binary search trees using the definition from} \Cref{ssec:result-stscv}.
\end{lemma}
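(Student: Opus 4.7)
The plan is to prove the set-theoretic identity
\[\{v \in \mathcal{T} : u \preceq v\} \;=\; \big\{j \geq k : y_j \in (\mathcal{T}_{\text{left}}(u),\mathcal{T}_{\text{right}}(u))\big\},\]
from which \eqref{eq:subtree-proporitions_in_BSTs} follows by dividing by $|\mathcal T|$ (after identifying each $j$ with the node receiving the label $y_j$). The key observation is the following standard characterization of subtrees in a BST: a label $y$ sits in the subtree rooted at $u$ if and only if, during the insertion of $y$, at every strict ancestor $w$ of $u$ the comparison with $\mathcal{T}(w)$ dictates the same child-direction as when $y_k$ was inserted. In other words, the labels in the subtree rooted at $u$ are precisely those lying strictly between the largest ancestor-label smaller than $\mathcal{T}(u)$ and the smallest ancestor-label larger than $\mathcal{T}(u)$ — which by construction are exactly $\mathcal{T}_{\text{left}}(u)$ and $\mathcal{T}_{\text{right}}(u)$.

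The first step is to verify this characterization by a short induction on the depth of $u$: at the root, the interval is $(0,1)$ and the claim is tautological; at a node $u = w0$ (respectively $u = w1$), the interval at $w$ is cut at $\mathcal{T}(w)$, which updates $\mathcal{T}_{\text{right}}$ (resp.\ $\mathcal{T}_{\text{left}}$) to $\mathcal{T}(w)$ and leaves the other bound unchanged, matching the definitions from Section~\ref{ssec:result-stscv}.

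Next I would handle the temporal constraint $j \ge k$. For $j < k$, the label $y_j$ is inserted strictly before $y_k$, so $y_j$ is already in $\mathcal{T}$ at the moment $y_k$ is added; hence $y_j$ is on the insertion path of $y_k$, i.e.\ a strict ancestor of $u$, and therefore not a descendant of $u$. This means it is immaterial to count only labels $y_j$ with $j \ge k$ among those in $(\mathcal{T}_{\text{left}}(u), \mathcal{T}_{\text{right}}(u))$. Conversely, for $j \ge k$, if $y_j$ lies in the open interval $(\mathcal{T}_{\text{left}}(u), \mathcal{T}_{\text{right}}(u))$ then by the characterization above, the insertion path of $y_j$ passes through $u$ (this uses that $y_j$ was not yet inserted when $y_k$ was, so $u$ already exists when $y_j$ arrives), and therefore $y_j$ ends up as a descendant of $u$; note that $j=k$ is included trivially since $\mathcal{T}(u)=y_k$ lies in the interval by $\mathcal{T}_{\text{left}}(u) < \mathcal{T}(u) < \mathcal{T}_{\text{right}}(u)$.

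The only subtlety is the endpoint of the interval: one must check that $\mathcal{T}_{\text{left}}(u)$ and $\mathcal{T}_{\text{right}}(u)$, when they correspond to actual ancestors (rather than the formal values $0$ or $1$), are themselves excluded from the count — which is automatic since those ancestors have indices $< k$ and therefore are not in $\{y_k,\dots,y_n\}$ anyway. Putting the two inclusions together and dividing by $|\mathcal T| = n$ yields \eqref{eq:subtree-proporitions_in_BSTs}. I do not anticipate a real obstacle; the argument is a careful unpacking of the BST insertion rule combined with the definitions of $\mathcal{T}_{\text{left}}$ and $\mathcal{T}_{\text{right}}$.
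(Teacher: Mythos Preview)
Your approach is essentially the paper's: both argue that $y_i$ lands in the subtree rooted at $u$ precisely when $i\ge k$ and $y_i$ compares to every strict ancestor of $u$ the same way $y_k$ did, which is the condition $y_i\in(\lTleft(u),\lTright(u))$.

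One sentence in your write-up is false, though it does not damage the argument. You claim that for $j<k$, the label $y_j$ ``is on the insertion path of $y_k$, i.e.\ a strict ancestor of $u$''. This is not true in general: take $y=(2,1,3)$ with $k=3$ and $j=2$; then $y_2=1$ is not on the insertion path of $y_3=3$. The conclusion you actually need --- that $y_j$ with $j<k$ is never a descendant of $u$ --- is correct for the simpler reason that $y_j$ was inserted before the node $u$ was created. (In fact one can show that no $y_j$ with $j<k$ lies in the open interval at all, so the restriction to $j\ge k$ in the statement is redundant; but you do not need this.) Just replace the faulty sentence and the proof is fine.
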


\begin{proof}
    Consider the iterative construction of $\lbst{y_1,\dots,y_n}$.
    A number $y_i$ in the list is inserted in a node which is a strict descendant of $u$ if
    \begin{itemize}
        \item the node $u$ has been filled before, i.e.~if $i>k$;
        \item the number $y_i$ compares in the same way as $y_k$ to all numbers $\lT(u')$, where $u'$ is an ascendant of $u$. 
        This condition is equivalent to $y_i \in \big( \lTleft(u),\lTright(u) \big)$, see \Cref{fig:right-left-ancestors}.
    \end{itemize}
    Hence, the numerator in \eqref{eq:subtree-proporitions_in_BSTs} is indeed the number of descendants of $u$ in $T$ (including $u$, which corresponds to the label $y_k$). 
    This proves the lemma.
\end{proof}

\begin{proof}[Proof of \Cref{thm:limit}]
    Let us prove the first statement:  
    assume that Assumption~\assumption{2} is satisfied by $\mu$, and let $\mu_0$ be defined by \eqref{eq:def_mu0}.
    
    Let $\lPT{n}{}:=\lbst{\pts^n_\mu}$, skipping the dependency on $\mu$ in the notation.
    Since the subtree size topology is by definition the pointwise convergence of the function $(t(.,u))_{u \in \rV}$, we need to prove the convergence of finite-dimensional distributions.
    Namely, we need to prove that, for any $d \ge 1$ and $u_1,\dots,u_d$ in $\rV$, we have the following convergence in distribution as $n$ tends to $\infty$:
    \begin{equation}
        \label{eq:ToProve-Grubel}
        \big(t( \lPT{n}{} ,u_i) \big)_{i \le d} \longrightarrow \big(\Tlim{\mu}(u_i) \big)_{i \le d}.
    \end{equation}
    As usual, for each $n$, let $(X^n_1,Y^n_1),\dots,(X^n_n,Y^n_n)$ be the i.i.d.~random points in $[0,1]^2$ with common distribution $\mu$ used to construct $ \lbst{\pts^n_\mu}$ and reorder them as a sequence $(X^n_{(1)},Y^n_{(1)}),\dots,(X^n_{(n)},Y^n_{(n)})$ such that $X^n_{(1)} < \dots < X^n_{(n)}$.

    From \Cref{corol:k-pts-fixed-n}, we have the following convergence in distribution for the topology of pointwise convergence:
    \begin{equation}\label{eq:cv_first_Y}
        \big( Y_{(k)}^n \big)_{k \ge 1} 
        \underset{n\rightarrow\infty}{\longrightarrow} 
        (Y_k)_{k \ge 1},
    \end{equation}
    where $Y_1,Y_2,\dots$~is an infinite sequence of i.i.d.~random variables with distribution $\mu_0$.
    Using Skorohod's representation theorem \cite[Section 6]{billingsley1999convergence}, we might assume that the above convergence holds almost surely.

    Since $\mu_0$ has no atoms, the numbers $(Y_k)_{k\ge1}$ are a.s.~all distinct.
    Moreover, the tree $\lbst{Y_1,Y_2,\dots}$ has a.s.~shape $\rV$ (i.e.~there is no empty branch).
    Consequently, a.s., there exists a (random) threshold $K$ such that all nodes $u_i$ belong to $\lbst{Y_1,\dots,Y_K}$.
    Using the convergence \eqref{eq:cv_first_Y}, we know that there exists a (random) threshold $n_0$ such that for $n \ge n_0$, the relative order of $(Y_{(1)}^n,\dots, Y_{(K)}^n)$ is the same as that of $(Y_1,\dots,Y_K)$.
    This implies that the trees $\lT_K^n:=\lbst{Y_{(1)}^n,\dots, Y_{(K)}^n}$ and $\lT_K^\infty:=\lbst{Y_1,\dots,Y_K}$ have the same shape $T_K$.
    Moreover, for any $v$ in $T_K$, the values $\lT_K^n(v)$ and $\lT_K^\infty(v)$  correspond to $Y_{(i)}^n$ and $Y_i$ respectively, for the {\em same} index $i$. 
    Therefore, using again \eqref{eq:cv_first_Y}, we know that $\lT_K^n(v)$ converges to $\lT_K^\infty(v)$ 
    (a.s.~in the probability space created by the application of Skorohod's representation theorem).

    Now, using \Cref{lem:subtree-proporitions_in_BSTs} and the fact that each $u_i$ is filled in $\lT_n$ before step $K=\O_\prob(1)$, we have that
    \[t(\lPT{n}{},u_i) =\frac1n \, \Big| \{ Y^n_1,\dots,Y^n_n\} \cap \big( \lTnleft(u_i),\lTnright(u_i) \big) \Big|  + \o_\prob(1) \,. \]
    Introducing the empirical distribution function of the $(Y^n_i)_{i \le n}$
    \begin{equation}\label{eq:empirical-cdf-proof-STS}
        F_n(y):=\frac1{n} \big| \{ Y^n_1,\dots,Y^n_n\} \cap ( -\infty,y ) \big| \,,
    \end{equation}
    we have 
    \[ t( \lPT{n}{},u_i) = F_n\big( \lTnright(u_i) \big) - F_n\big( \lTnleft(u_i) \big) +\o_\prob(1) \,. \]
    For each fixed $n$, $(Y^n_i)_{1 \le i \le n}$ are i.i.d.~random variables in $[0,1]$.
    Since $\mu$ is a permuton, it satisfies~\eqref{eq:uniform_marginals}, and the common distribution of the $Y^n_i$'s is the uniform distribution.
    {From \Cref{prop:glivenko_cantelli_triangular}, we infer that $F_n$ converges a.s.~uniformly on $[0,1]$ to the identity function
    (the earlier use of Skorohod's representation theorem implies that the $(Y^n_i)_{1 \le i \le n}$ are coupled in a nontrivial way for different values of $n$, but \Cref{prop:glivenko_cantelli_triangular} applies nevertheless).}
    
    Moreover, the above discussion implies that $\lTnright(u_i)$ and $\lTnleft(u_i)$ converge a.s.~to $\lTinfright(u_i)$ and $\lTinfleft(u_i)$ respectively.
    Therefore, a.s.~in the probability space created by the application of Skorohod's representation theorem, we have that, for all $i\le d$,
    \[t( \lT^n,u_i) =\lTinfright(u_i) -\lTinfleft(u_i) +o_\prob(1)=\Tlim{\mu_0}(u_i) +o_\prob(1). \]
    Since a.s.~(joint) convergence implies (joint) convergence in distribution, \eqref{eq:ToProve-Grubel} is proved, concluding the first statement of \Cref{thm:limit}.

    Now let us prove the second statement: 
    we assume that $t( \lPT{n}{} ,u)$ converges in distribution as $n\to\infty$, finitely jointly in $u\in\rV$.
    In particular, the proportion of nodes in the left-subtree of the root, $t( \lPT{n}{} ,0)$, converges in distribution.
    However, using \Cref{prop:glivenko_cantelli_triangular} as before, we have
    \begin{equation*}
        t( \lPT{n}{} ,0) 
        = \frac1n \Big|\Big\{k : Y_k^n < Y_{(1)}^n \Big\}\Big| 
        = F_n\big( Y_{(1)}^n \big)
        = Y_{(1)}^n + o_\prob(1)
    \end{equation*}
    where $F_n$ is again defined by \eqref{eq:empirical-cdf-proof-STS}.
    Therefore, $Y_{(1)}^n$ converges in distribution as $n\to\infty$.
    Using \Cref{corol:k-pts-fixed-n}, \eqref{item:cv_first_Y}$\implies$\eqref{item:cv_to_mu0}, this concludes the proof.
\end{proof}

\section{Some comparison arguments and consequences}

\subsection{Height modification by adding/removing points}

Given a tree $\mathcal T$, a \textit{chain} in $\mathcal T$ is a subset $C$ of its nodes such that for every pair $(v,w)$ in $C$, either $v$ is an ancestor of $w$, or the converse.
We note that the height of $\mathcal T$ is simply the maximal size of a chain in $\mathcal T$, minus 1.
We extend this definition to point sets as follows:
given a set of points $\pts=\{(x_1,y_1),\ldots,(x_n,y_n)\}$ with distinct coordinates, we say that $C\subseteq\pts$ is a chain in $\lbst{\pts}$ if the corresponding nodes in $\lbst{\pts}$ form a chain.

\begin{lemma}\label{lem:ancestor condition}
Let $y=(y_1,\dots,y_n)$ be a list of distinct numbers and $\mathcal T=\lbst{y}$ be the associated BST.
If $i<j$ are two indices then the following are equivalent:
\begin{enumerate}[(i)]
    \item $y_i$ is an ancestor of $y_j$ in $\mathcal T$ (the converse cannot hold);
    \item there is no $k<i$ such that $y_k$ is between $y_i$ and $y_j$, i.e.~such that $(y_i-y_k)(y_j-y_k) < 0$.
\end{enumerate}
\end{lemma}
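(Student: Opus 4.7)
The plan is to translate both conditions into statements about the insertion process. The parenthetical remark is immediate: since $i<j$, the element $y_i$ is inserted before $y_j$, so $y_j$ cannot be an ancestor of $y_i$.

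For (i) $\Rightarrow$ (ii), I would argue by contradiction. Assume $y_i$ is an ancestor of $y_j$ and, by symmetry, that $y_i < y_j$; suppose some $y_k$ with $k<i$ satisfies $y_i < y_k < y_j$. Let $y_m$ be the lowest common ancestor of $y_i$ and $y_k$ in $\lbst{y_1,\dots,y_i}$; then $m \le k < i$. If $m = k$, then $y_k$ is an ancestor of $y_i$, so $y_i$ lies in the left subtree of $y_k$ (since $y_i < y_k$), which forces the descendant $y_j$ into the same left subtree and yields $y_j < y_k$, contradicting $y_k < y_j$. If $m < k$, then $y_i$ and $y_k$ lie on opposite sides of $y_m$; combined with $y_i < y_k$ this gives $y_i < y_m < y_k$. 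But $y_j$ must lie in the subtree of $y_m$ containing its ancestor-extension $y_i$, i.e.~the left subtree, forcing $y_j < y_m$, while we have $y_j > y_k > y_m$, a contradiction.

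For (ii) $\Rightarrow$ (i), I would follow the insertion path of $y_j$. Let $y_{m_1},\ldots,y_{m_\ell}$ be the ancestors of $y_i$ in $\lbst{y_1,\ldots,y_i}$, listed in the order in which they are visited when $y_i$ is inserted; each satisfies $m_s < i$. By condition (ii), no $y_{m_s}$ is strictly between $y_i$ and $y_j$, so $(y_j - y_{m_s})(y_i - y_{m_s}) > 0$, meaning $y_j$ compares to $y_{m_s}$ exactly as $y_i$ does. Since the ancestor structure of $y_i$ is already frozen at step $i$ (it is the same in $\lbst{y_1,\ldots,y_i}$ and in $\lbst{y_1,\ldots,y_{j-1}}$), the insertion path of $y_j$ in $\lbst{y_1,\ldots,y_{j-1}}$ visits $y_{m_1},\ldots,y_{m_\ell}$ in the same order and reaches $y_i$, proving that $y_i$ is an ancestor of $y_j$.

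I expect the main subtlety to lie in the forward direction: one must distinguish between the case where $y_k$ itself is an ancestor of $y_i$ and the case where $y_k$ sits on a different branch (so that the lowest common ancestor $y_m$ is strictly earlier than both), and in each case derive a sign contradiction via the BST left/right placement of $y_j$. The reverse direction is essentially a direct unpacking of the BST insertion rule.
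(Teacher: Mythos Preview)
Your proof is correct. Both directions go through as written: the LCA case split in (i)$\Rightarrow$(ii) is clean, and the insertion-path argument for (ii)$\Rightarrow$(i) correctly uses that the ancestor set of $y_i$ is frozen after step $i$ and that each such ancestor has index $<i$, so condition (ii) applies to it.

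The paper's proof, however, is considerably shorter and more conceptual. It invokes the top-tree/hanging-tree decomposition from Section~\ref{ssec:proof-overview}, taking the top tree to be the BST of $y_1,\dots,y_{i-1}$. Condition (ii) then says precisely that $y_i$ and $y_j$ land in the same hanging tree; since $y_i$ is the first element inserted into that hanging tree, it is its root, hence an ancestor of $y_j$. Conversely, elements in distinct hanging trees sit in disjoint subtrees of the top tree and cannot be ancestor/descendant. Your approach is self-contained and would work in any paper, but in this one the decomposition argument is preferable: it is a two-sentence proof, it reinforces the paper's central structural device, and it avoids the case analysis and LCA machinery entirely.
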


\begin{proof}
{As in Section~\ref{ssec:proof-overview}, we see $\mathcal T$ as a top tree, formed by the insertion of the first $i-1$ elements, and hanging trees. 
The condition (ii) above exactly stipulates that $y_i$ and $y_j$ are in the same hanging tree in this decomposition.
If this is the case, then $y_i$ is the first vertex inserted in this hanging tree, and therefore is its root, so that $y_i$ is indeed an ancestor of $y_j$. {Conversely}, if $y_i$ and $y_j$ are in different hanging trees, then $y_i$ cannot be an ancestor of $y_j$.}
\end{proof}

\begin{lemma}\label{lem:chain trick}
    Let $\pts_-\subseteq\pts_+$ be two sets of points with distinct $x$- and distinct $y$-coordinates.
    Then, for any chain $C$ of $\lbst{\pts_+}$, the set $C\cap\pts_-$ is a chain of $\lbst{\pts_-}$.
    Consequently, if $\mathcal{C}$ is a chain of maximal size in $\lbst{\pts_+}$, we have
    \begin{align*}
        \height{\pts_-} \ge \height{\pts_+} - \big|\mathcal{C}\,\cap\,(\pts_+\setminus\pts_-)\big|\,.
    \end{align*}
\end{lemma}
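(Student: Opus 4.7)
The plan is to deduce the first assertion directly from the ancestor characterization in \Cref{lem:ancestor condition}, and then the second assertion by a straightforward size count.

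First I would reformulate the hypothesis on chains. Ordering the points of $\pts_+$ by increasing $x$-coordinate, so that this corresponds to the insertion order, the chain condition means that any two distinct points $p,q \in C$ are in an ancestor/descendant relation in $\lbst{\pts_+}$; by \Cref{lem:ancestor condition}, if $p$ has a smaller $x$-coordinate than $q$, then $p$ is an ancestor of $q$, which is equivalent to the following: no point of $\pts_+$ whose $x$-coordinate is less than that of $p$ has a $y$-coordinate strictly between $y_p$ and $y_q$.

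For the first assertion, take two distinct points $p,q \in C\cap\pts_-$, with $p$ having the smaller $x$-coordinate. The condition above, a priori involving all of $\pts_+$, is \emph{monotone} in the sense that it remains true when one restricts to a sub-point-set. Since $\pts_- \subseteq \pts_+$, no point of $\pts_-$ with $x$-coordinate smaller than that of $p$ can have a $y$-coordinate strictly between $y_p$ and $y_q$. Applying the converse direction of \Cref{lem:ancestor condition} inside $\lbst{\pts_-}$, we conclude that $p$ is an ancestor of $q$ in $\lbst{\pts_-}$. Since this holds for any pair in $C\cap\pts_-$, this set is indeed a chain of $\lbst{\pts_-}$.

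For the second assertion, let $\mathcal{C}$ be a chain of maximal size in $\lbst{\pts_+}$, so that $|\mathcal{C}| = \height{\pts_+} + 1$. By the first part, $\mathcal{C}\cap\pts_-$ is a chain of $\lbst{\pts_-}$, and since the height of a BST equals the maximal size of a chain minus $1$, we obtain
\begin{equation*}
    \height{\pts_-} \ge |\mathcal{C}\cap\pts_-| - 1 = |\mathcal{C}| - |\mathcal{C}\cap(\pts_+\setminus\pts_-)| - 1 = \height{\pts_+} - |\mathcal{C}\cap(\pts_+\setminus\pts_-)|,
\end{equation*}
as desired. There is no real obstacle here beyond correctly exploiting the monotonicity of condition (ii) in \Cref{lem:ancestor condition} under passing to a subset of points; the rest is bookkeeping.
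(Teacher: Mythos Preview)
Your proof is correct and follows essentially the same approach as the paper: both arguments use \Cref{lem:ancestor condition} to characterize the ancestor relation, observe that condition (ii) there is monotone under passing to a sub-point-set, and then perform the identical size count for the height inequality. The paper phrases things in terms of the $y$-coordinate sequences $y^+$ and $y^-$ rather than the points directly, but the logic is the same.
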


\begin{proof}
    Let $y^+=(y_1^+,y_2^+,\dots)$ and $y^-=(y_1^-,y_2^-,\dots)$ denote the sequences of $y$-coordinates of $\pts_+$ and $\pts_-$ read by increasing $x$-coordinate, so that $\lbst{\pts_+} = \lbst{y^+}$ and $\lbst{\pts_-} = \lbst{y^-}$.
    Using \Cref{lem:ancestor condition}, if $C =(y_{i_1}^+,\dots,y_{i_\ell}^+)$ is a chain of $\lbst{y^+}$ then for any triple of indices $k<i_j<i_{j'}$ one has 
    \[ \big( y_{i_j}^+-y_k^+ \big)\big( y^+_{i_{j'}}-y_k^+ \big) > 0 .\]
    Since $y^-\subseteq y^+$, this property still holds when restricted to $y^-$, and therefore $C \cap y^-$ is a chain of $\lbst{y^-}$.
    
    Now if $\mathcal{C}$ is a chain of maximal size in $\lbst{y^+}$, one has $\height{y^+}=|\mathcal{C}|-1$.
    But $\mathcal{C} \cap y^-$ is a chain in $\lbst{y^-}$, implying
    \begin{equation*}
    \height{y^-} \ge \big|\mathcal{C} \cap y^-\big| -1 = |\mathcal{C}| - \big|\mathcal{C} \cap (y^+ \setminus y^-)\big| -1 = \height{y^+} - \big|\mathcal{C} \cap (y^+ \setminus y^-)\big|.\qedhere
    \end{equation*}
\end{proof}

Combining the above lemma with standard thinning properties of Poisson point processes, we get the following useful proposition.

\begin{prop}\label{prop:two fish}
    Let $\rho_-\leq\rho_+$ be two intensity functions defined on the same support $S\subseteq\bR^2$, and $\pts_-,\pts_+$ be two Poisson point processes with intensities $\rho_-$ and $\rho_+$.
    Then, we have
    \begin{align*}
        \height{\pts_-} \succeq
        \binomial{1+\height{\pts_+}}{\inf_{(x,y)\in S} \frac{\rho_-(x,y)}{\rho_+(x,y)}}-1.
    \end{align*}
\end{prop}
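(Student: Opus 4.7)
The plan is to build an explicit coupling of $\pts_-$ and $\pts_+$ via Poisson thinning, and then chain this coupling with \Cref{lem:chain trick}. Let me assume for simplicity that $\rho_+ > 0$ on $S$ (otherwise one restricts to the set $\{\rho_+>0\}$, where $\pts_+$ is almost surely supported). Set
\[
  p(x,y) := \frac{\rho_-(x,y)}{\rho_+(x,y)} \in [0,1],
  \qquad p_{\min} := \inf_{(x,y)\in S} p(x,y).
\]
By the standard thinning property of Poisson point processes, if we sample $\pts_+$ and then independently keep each point $(x,y)\in\pts_+$ with probability $p(x,y)$, the resulting subset is a Poisson point process of intensity $\rho_-$, hence has the same law as $\pts_-$. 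From now on we assume this coupling, so that $\pts_- \subseteq \pts_+$ almost surely.

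Next, conditionally on $\pts_+$, fix a chain $\mathcal C$ of maximal size in $\lbst{\pts_+}$, so that $|\mathcal C| = 1 + \height{\pts_+}$ (measurably as a function of $\pts_+$, which is possible since $\pts_+$ is a.s.~finite). Applying \Cref{lem:chain trick} to $\pts_- \subseteq \pts_+$ with the chain $\mathcal C$ gives
\[
  \height{\pts_-} \ \ge\ |\mathcal C \cap \pts_-| - 1.
\]
Conditionally on $\pts_+$ (and hence on $\mathcal C$), the random set $\mathcal C \cap \pts_-$ is obtained by keeping each point $(x,y) \in \mathcal C$ independently with probability $p(x,y)\ge p_{\min}$. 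Therefore $|\mathcal C \cap \pts_-|$ is, conditionally on $\pts_+$, a sum of $|\mathcal C|$ independent Bernoulli variables with parameters bounded below by $p_{\min}$, so it stochastically dominates a $\binomial{|\mathcal C|}{p_{\min}} = \binomial{1+\height{\pts_+}}{p_{\min}}$ random variable.

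This conditional stochastic domination passes to the unconditional law (integrating $\expec[f(\cdot)\mid\pts_+]$ against any non-decreasing $f$), which yields
\[
  \height{\pts_-} \ \succeq\ \binomial{1+\height{\pts_+}}{p_{\min}} - 1,
\]
as claimed. There is no real obstacle in this argument: the only mild care required is handling the conditioning correctly when transferring stochastic domination of the thinned chain size to a statement about the laws of $\height{\pts_-}$ and $\height{\pts_+}$, and observing that $\mathcal C$ can be chosen measurably as a function of $\pts_+$; both are routine.
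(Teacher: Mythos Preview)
Your proof is correct and follows essentially the same approach as the paper: couple $\pts_-\subseteq\pts_+$ via Poisson thinning, take a maximal chain $\mathcal C$ in $\lbst{\pts_+}$, use \Cref{lem:chain trick} to deduce $\height{\pts_-}\ge|\mathcal C\cap\pts_-|-1$, and observe that $|\mathcal C\cap\pts_-|$ conditionally dominates a $\binomial{|\mathcal C|}{p_{\min}}$ random variable. The only cosmetic differences are your explicit remarks on measurability of $\mathcal C$ and on passing from conditional to unconditional stochastic domination, which the paper leaves implicit.
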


\begin{proof}
    Write $r := \inf_{(x,y)\in S} \frac{\rho_-(x,y)}{\rho_+(x,y)}$ where, by convention, $\frac{\rho_-(x,y)}{\rho_+(x,y)}=1$ if $\rho_+(x,y)=0$.
    We couple $\pts_+$ and $\pts_-$ according to the classical thinning process, meaning that $\pts_-$ is constructed by keeping each point $(x,y)$ of $\pts_+$ independently with probability $\rho_-(x,y) / \rho_+(x,y)\geq r$.
    Let $C$ be a set of points of $\pts_+$ corresponding to a chain of maximum length in $\lbst{\pts_+}$ and denote by $K = |C\cap\pts_-| = |C| - \big| C \,\cap\, (\pts_+\setminus\pts_-) \big|$ the number of points on this chain kept by the thinning procedure.
    Using the thinning process, conditionally given $\pts_+$, the following stochastic dominance holds:
    \begin{equation*}
        K \succeq \binomial{|C|}{r}\,.
    \end{equation*}
    Then by Lemma~\ref{lem:chain trick}, since $|C|=\height{\pts_+}+1$, we have
    \begin{equation*}
        \height{\pts_-}
        \ge \height{\pts_+} -
        \big|C\,\cap\,(\pts_+\setminus\pts_-)\big|=K-1\,.\qedhere
    \end{equation*}
\end{proof}

\begin{remark}\label{rem:height and points}
Considering as in \Cref{lem:chain trick} two sets of points $\pts_-\subseteq\pts_+$ with distinct $x$- and $y$-coordinates, the height $\height{\pts_-}$ can be much bigger than $\height{\pts_+}$, even while removing a single point.
To see this, take $n$ odd, let $\pts_-=\{(i/n,i/n), 0 < i < n\}$, and $\pts_+=\pts_- \cup \{(0,1/2)\}$.
The points in $\pts_-$ form an increasing sequence, so that $\lbst{\pts_-}$ consists in a single branch growing to the right, and has height $n-2$. 
On the other hand, the root in $\lbst{\pts_+}$ has label $1/2$ and divides the tree into two equal parts of size $(n-1)/2$. 
Each of this part has height $(n-1)/2-1$, so that $\height{\pts_+} = (n-1)/2$.
\end{remark}

\subsection{A de-Poissonization result}

The previous comparison lemma can also be used to de-Poissonize convergence results for the height of BSTs.
{Recall that $\pts^n_\mu$ and $\pts^N_\mu$ denote respectively a sample of $n$ i.i.d.~random points with law $\mu$, and a Poisson point process with intensity $n \mu$.}

\begin{thm}\label{thm:Poisson fixed}
    Let $\mu$ be a permuton. 
    Let $f:\bN \rightarrow [1,\infty)$ be a function such that there exists $\frac12 < \alpha <1$ satisfying
    \begin{align}\label{eq:regularity_f}
        \sup_{|\delta|\leq n^\alpha} \left| \frac{f(n+\delta)}{f(n)} - 1 \right|
        \underset{n\to\infty}{\longrightarrow} 0 \,.
    \end{align}
    Then we have
    \begin{align*}
        \frac{\height{\pts^n_\mu}}{f(n)} \overset{\prob}{\longrightarrow}1~\Longleftrightarrow~\frac{\height{\pts^N_\mu}}{f(n)} \overset{\prob}{\longrightarrow}1
    \end{align*}
    as $n\to\infty$.
    Moreover all powers of $\frac{\height{\pts^n_\mu}}{f(n)}$ are uniformly integrable if and only if all powers of $\frac{\height{\pts^N_\mu}}{f(n)}$ are uniformly integrable.
\end{thm}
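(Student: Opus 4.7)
The plan is to couple $\pts^N_\mu$ with auxiliary samples of deterministic sizes. Let $(X_i,Y_i)_{i \ge 1}$ be i.i.d.~with law $\mu$, let $N \sim \poisson{n}$ be independent, and set $\pts^m_\mu := \{(X_i,Y_i) : 1 \le i \le m\}$ for any (possibly random) integer $m$. Put $n_\pm := n \pm \lceil n^\alpha \rceil$. Since $\alpha > 1/2$, \Cref{lem:Poisson_bounds} implies that the event $E_n := \{n_- \le N \le n_+\}$ holds w.h.p. On $E_n$, the inclusions $\pts^{n_-}_\mu \subseteq \pts^N_\mu \subseteq \pts^{n_+}_\mu$ hold, and \Cref{lem:chain trick} yields
\[
\height{\pts^{n_+}_\mu} - K^+ \;\le\; \height{\pts^N_\mu} \;\le\; \height{\pts^{n_-}_\mu} + K^-,
\]
where $K^+ := |\mathcal C_{n_+} \cap (\pts^{n_+}_\mu \setminus \pts^N_\mu)|$ for $\mathcal C_{n_+}$ a maximum chain in $\lbst{\pts^{n_+}_\mu}$, and $K^- := |\mathcal C_N \cap (\pts^N_\mu \setminus \pts^{n_-}_\mu)|$ for $\mathcal C_N$ a maximum chain in $\lbst{\pts^N_\mu}$. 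By exchangeability of the i.i.d.~sequence, given the larger unordered point set and $N$, the smaller is a uniformly random subset; hence $K^\pm$ have hypergeometric distributions whose conditional means are (chain length)$\cdot\O(n^{\alpha-1})$ on $E_n$.

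Suppose $\height{\pts^n_\mu}/f(n) \overset{\prob}{\to} 1$. Applying this at $n_\pm$ in place of $n$ and combining with $f(n_\pm)/f(n) \to 1$ (which is the regularity assumption on $f$), we deduce $\height{\pts^{n_\pm}_\mu}/f(n) \overset{\prob}{\to} 1$. The lower bound on $\height{\pts^N_\mu}$ is then immediate: since $|\mathcal C_{n_+}| = \O_\prob(f(n))$, Markov's inequality applied to $\expec[K^+ \mid \lbst{\pts^{n_+}_\mu},N] = \O(f(n) n^{\alpha-1}) = o_\prob(f(n))$ yields $K^+ = o_\prob(f(n))$, hence $\height{\pts^N_\mu} \ge (1-o_\prob(1))f(n)$. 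The complementary upper bound is the main obstacle, because $\expec[K^- \mid \mathcal C_N, N]$ depends circularly on $\height{\pts^N_\mu}$ itself. I plan to resolve this via a bootstrap. Starting from the deterministic estimate $K^- \le N - n_- \le 2n^\alpha$, we get $\height{\pts^N_\mu} \le B_0 := (1+\eps)f(n) + 2n^\alpha$ w.h.p. Feeding $B_0$ back into the hypergeometric mean and applying Markov's inequality produces $K^- = \O_\prob(B_0 n^{\alpha-1})$, hence the improved bound $B_1 := (1+\eps)f(n) + \O_\prob(B_0 n^{\alpha-1})$. Since $\alpha < 1$, the contraction factor $n^{\alpha-1}$ tends to $0$, and iterating a finite number of times (depending only on $\alpha$) yields $\height{\pts^N_\mu} \le Cf(n)$ w.h.p.~for a constant $C$. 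A final application of the hypergeometric estimate gives $K^- = o_\prob(f(n))$, hence $\height{\pts^N_\mu} \le (1+o_\prob(1))f(n)$. The reverse implication follows by a symmetric argument.

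For the equivalence of uniform integrability of all powers, the same coupling and bootstrap scheme apply, with Markov's inequality replaced by sharper Chernoff-type tail estimates for the hypergeometric variables and combined with the exponential concentration of $N$ around $n$ from \Cref{lem:Poisson_bounds}; this enables controlling $\expec[(K^\pm)^p]$ for every $p$ and thus transferring arbitrary moment bounds in both directions.
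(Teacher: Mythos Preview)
Your coupling and the sandwich inequalities from \Cref{lem:chain trick} are correct, and the bootstrap for the upper bound is a valid alternative to the paper's approach. A few remarks.

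First, you have the emphasis backwards. The implication from fixed $n$ to Poisson $N$ is trivial: conditionally on $N=m$, the variable $\height{\pts^N_\mu}/f(N)$ is distributed as $\height{\pts^m_\mu}/f(m)$, so convergence for all deterministic sizes immediately gives the Poissonized statement, and then $f(N)/f(n)\to 1$ from \eqref{eq:regularity_f}. No bootstrap is needed there. The nontrivial content is the de-Poissonization $N\Rightarrow n$, which you dismiss as ``symmetric''. Your bootstrap does indeed adapt---sandwiching $\pts^n_\mu$ between two Poisson processes of intensities $(n\pm n^\alpha)\mu$---but that is precisely where the work lies and should be spelled out rather than invoked by symmetry.

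For that de-Poissonization the paper proceeds differently: instead of iterating, it introduces an auxiliary exponent $\beta\in(\alpha,1)$ and splits into the cases $f(n)\ge n^\beta$ (where trivially $K^-\le n-N_-=o(f(n))$) and $f(n)<n^\beta$ (where one application of the crude bound yields $n-\height{\pts^n_\mu}\ge n-3n^\beta$, making the binomial parameter $o(1)$ and producing directly a one-step self-referential inequality $K^-\le \tfrac{\delta/2}{1+\delta}\height{\pts^n_\mu}$). Your finite iteration with contraction factor $n^{\alpha-1}$ reaches the same conclusion and is a legitimate alternative; the paper's case split is arguably cleaner but not essentially different.

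The genuine gap is the uniform integrability claim. You assert that ``Chernoff-type tail estimates'' combined with Poisson concentration transfer all moment bounds, but this does not address the circularity: the $p$-th moment of $K^-$ is controlled only in terms of the $p$-th moment of $1+\height{\pts^n_\mu}$, which is exactly the quantity you are trying to bound. In the paper this is resolved by a careful split on $\{\height{\pts^{N_-}_\mu}\gtrless n^\beta\}$, a Cauchy--Schwarz estimate on the large-height event using that the $2p$-th Poisson moment is bounded, and finally a self-referential moment inequality of the form
\[
\expec\!\left[\frac{\height{\pts^n_\mu}^p}{f(n)^p}\right]\le \O(1)+2n^{\beta-1}\,\expec\!\left[\frac{(1+\height{\pts^n_\mu})^p}{f(n)^p}\right],
\]
which can be rearranged because $2n^{\beta-1}\to 0$. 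Your proposal gives no indication of how to obtain such an inequality, nor of how to control the contribution of the event $\{\height{\pts^{N_-}_\mu}\ge n^\beta\}$ where the hypergeometric parameter is not small; Chernoff bounds on $K^-$ alone are insufficient there.
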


\begin{proof}
    First suppose that $\frac{\height{\pts_\mu^n}}{f(n)} \overset{\prob}{\longrightarrow}1$.
    Then $\frac{\height{\pts_\mu^N}}{f(N)} \overset{\prob}{\longrightarrow}1$ as $n\to\infty$.
    Moreover, since $\alpha >1/2$, w.h.p.~it holds that $n- n^\alpha\leq N\leq n+ n^\alpha$, so the regularity hypothesis \eqref{eq:regularity_f} on $f$ implies $f(n)/f(N) \overset{\prob}{\longrightarrow} 1$ and we conclude that $\frac{\height{\pts_\mu^N}}{f(n)} \overset{\prob}{\longrightarrow}1$.
    
    Now suppose that all powers of $\frac{\height{\pts_\mu^n}}{f(n)}$ are uniformly integrable, or equivalently that for every integer $p$, the sequence $\expec\left[ \frac{\height{\pts_\mu^n}^p}{f(n)^p} \right]$ is bounded in $n$.
    This implies that the sequence $\expec\left[ \frac{ \height{\pts_\mu^N}^p }{f(N)^p} \right]$ is also bounded in $n$.
    Therefore:
    \begin{align*}
        \expec\left[ \frac{ \height{\pts_\mu^N}^p }{f(n)^p} \right]
        &\leq \expec\left[ \frac{ \height{\pts_\mu^N}^p }{f(n)^p} \mathbf1_{|N-n|\le n^\alpha} \right] \, + \, \expec\left[ N^p \mathbf1_{|N-n|> n^\alpha} \right]
        \\&\leq \expec\left[ \frac{ \height{\pts_\mu^N}^p }{f(N)^p} \right] \sup_{|\delta|\leq n^\alpha}\frac{f(n+\delta)^p}{f(n)^p}
\, +\,  \sqrt{\expec\left[ N^{2p} \right] \prob\left( |N-n|>n^\alpha \right)}.
    \end{align*}
    The first term is bounded by assumption, while the second one is easily proved to tend to $0$, using \Cref{lem:Poisson_bounds} and \Cref{eq:moment_poisson}.
    We conclude that, for any $p$, the quantity $\expec\left[ \frac{\height{\pts_\mu^n}^p}{f(n)^p} \right]$ is bounded in $n$, and thus, all powers of $\frac{\height{\pts_\mu^n}}{f(n)}$ are uniformly integrable.
    \bigskip
    
    The converse implications require de-Poissonization and are more subtle.
    The global idea is to bound the height of $\lbst{\pts_\mu^n}$ by two Poissonized versions, using \Cref{lem:chain trick}.
    However, making this idea work (both for probability convergence and for $L^p$ boundedness) requires some computation.
    
    Let $N_+\sim\poisson{n+ n^\alpha}$ and $\pts^{N_+}$ be a Poisson point process of intensity $(n+ n^\alpha)\mu$.
    Since $\alpha>\frac12$, the event $E_+ = \{N_+\geq n\}$ holds w.h.p.~as $n\to\infty$.
    Under $E_+$, define $\pts^n$ as a uniform subset of size $n$ in $\pts^{N_+}$.
    Then $\pts^n$ is a set of $n$ i.i.d.~points distributed under $\mu$.
    Similarly, let $N_-\sim\poisson{n- n^\alpha}$ and note that the event $E_-=\{N_-\leq n\}$ holds w.h.p.~as $n\to\infty$.
    Under the event $E_-$ and conditionally given $N_-$, define $\pts^{N_-}$ as a uniform subset of size $N_-$ in $\pts^{n}$.
    Then conditionally under $E_-$, the set $\pts^{N_-}$ is distributed like a Poisson point process of intensity $(n- n^\alpha)\mu$ conditioned to have at most $n$ points.
    By applying \Cref{lem:chain trick} to both $\pts^n\subseteq\pts^{N_+}$ and $\pts^{N_-}\subseteq\pts^n$, conditionally under $E=E_-\cap E_+$, we have that
    \begin{align}\label{eq:encadrement fixed poisson}
        \height{\pts^{N_+}} - \big|C_+\,\cap\,(\pts^{N_+}\setminus\pts^n)\big|
        \le \height{\pts^n} \le 
        \height{\pts^{N_-}} + \big|C\,\cap\,(\pts^{n}\setminus\pts^{N_-})\big|
    \end{align}
    where $C_+,C$ are arbitrary chains of maximal size in $\lbst{\pts^{N_+}}$ and $\lbst{\pts^{n}}$ respectively.
    Under $E$ and conditionally on $N_+$ and $\height{\pts^{N_+}}$, we may apply \Cref{lem:size dom} to obtain:
    \begin{align}\label{eq:dom+}
        \big|C_+\,\cap\,(\pts^{N_+}\setminus\pts^n)\big|
        \preceq \binomial{1+\height{\pts^{N_+}}}{\frac{N_+-n}{N_+ - \height{\pts^{N_+}} - 1}} \,.
    \end{align}
    Similarly, assuming $E$ and conditionally given $N_-$ and $\height{\pts^n}$, it holds that:
    \begin{align}\label{eq:dom-}
        \big|C\,\cap\,(\pts^{n}\setminus\pts^{N_-})\big|
        \preceq \binomial{1+\height{\pts^{n}}}{\frac{n-N_-}{n-\height{\pts^{n}}-1}} \,.
    \end{align}
    
    {We now} suppose that $\frac{\height{\pts_\mu^N}}{f(n)} \overset{\prob}{\longrightarrow}1$ as $n$ goes to infinity, {and we want to prove that $\frac{\height{\pts_\mu^n}}{f(n)}$ tends to $1$ in probability as well}.
    Using the regularity hypothesis \eqref{eq:regularity_f} on $f$, we have $\frac{\height{\pts^{N_{\pm}}}}{f(n)} \overset{\prob}{\longrightarrow}1$.
    {Throughout the proof, we fix} constants $\beta \in(\alpha,1)$ and $\delta>0$.
    \medskip
 
    \underline{Lower bound on $\height{\pts_\mu^n}$}.
    Using \eqref{eq:encadrement fixed poisson}, as $n$ goes to infinity:
    \begin{align*}
        &\prob\left[ \height{\pts_\mu^n} \le (1-\delta)f(n) \right]
        \\&\leq \prob\left[ \height{\pts_\mu^n} \le (1-\delta)f(n) \;\wedge\; E \right] 
        + \prob[E^c]
        \\&\leq \prob\left[\Big. \height{\pts^{N_+}} - \Big|C_+\,\cap\,(\pts^{N_+}\setminus\pts^n)\Big| \le (1-\delta)f(n) \right] 
        + o(1)
        \\&\leq \prob\left[\Big. \height{\pts^{N_+}} \le (1-\delta/2)f(n) \right] 
        + \prob\left[ \Big|C_+\,\cap\,(\pts^{N_+}\setminus\pts^n)\Big| \ge \frac\delta2 f(n) \right]
        + o(1) \,.
    \end{align*}
    In the last line, we can conclude directly from the convergence $\frac{\height{\pts^{N_+}}}{f(n)} \overset{\prob}{\longrightarrow}1$ that the first probability is a $o(1)$, but the second one requires more attention.
    
    Notice that under the event $\left| C_+\,\cap\, \left( \pts^{N_+}\setminus\pts^n \right)\right| \geq \frac\delta2 f(n)$, we have $f(n)\leq \frac 2\delta (N_+-n)$. 
    Moreover, w.h.p.~as $n\to\infty$, it holds that $N_+-n\leq n^\beta$ and $1+\height{\pts_\mu^{N_+}} \le 2f(n)$. 
    Thus the parameters of the binomial random variable in \eqref{eq:dom+} are bounded w.h.p.~by $2f(n)$ and $2n^{\beta-1}$ respectively (recall that $\beta<1$).
    Denote by $S_n$ a $\binomial{\lfloor2f(n)\rfloor}{2n^{\beta-1}}$ random variable to obtain the following:
    \begin{align*}
        \prob\left[ \left| C_+\,\cap\, \left(\pts^{N_+}\setminus\pts^n \right)\right| \ge \frac\delta2 f(n) \right]
        \le \prob\left[ S_n \ge \frac\delta2 f(n) \right] + o(1)
        {\leq\frac{2\, \expec[S_n]}{\delta f(n)}+o(1)} 
        = o(1) 
    \end{align*}
    This concludes the proof that
    \begin{align*}
        \prob\left[ \height{\pts_\mu^n} \le (1-\delta)f(n) \right]
        \underset{n\to\infty}{\longrightarrow}0 \,.
    \end{align*}\medskip

    \underline{Upper bound on $\height{\pts_\mu^n}$}.
    For each $n\in\bN$ we distinguish between two cases.
    \begin{itemize}
        \item Suppose $f(n)\ge n^\beta$. 
        Then we use \eqref{eq:encadrement fixed poisson} as before:
        \begin{align*}
            &\prob\left[ \height{\pts_\mu^n} \ge (1+\delta)f(n) \right]
            \\&\le \prob\left[\Big. \height{\pts^{N_-}} +\big|C\,\cap\,(\pts^{n}\setminus\pts^{N_-})\big| \ge (1+\delta)f(n) \right]
            + o(1)
            \\&\le \prob\left[ \height{\pts^{N_-}} \ge (1+\delta/2)f(n) \right] 
            + \prob\left[ n-N_- \ge \delta n^\beta /2 \right]
            + o(1)
            \\&=o(1) \,.
        \end{align*}
        \item Suppose $f(n)\le n^\beta$. 
        From \eqref{eq:encadrement fixed poisson} and the trivial bound $\big|C\,\cap\,(\pts^{n}\setminus\pts^{N_-})\big| \leq n-N_-$, we have $n- \height{\pts^{n}} \ge N_- - \height{\pts^{N_-}}$.
        But, w.h.p., $N_- \ge n - n^\beta$ and $\height{\pts^{N_-}} \le 2 f(n) \le 2 n^\beta$, implying that $n - \height{\pts^{n}} \ge n-3n^\beta$.
        Using again that $n-N^-\le n^\beta$ w.h.p., the probability parameter of the binomial random variable in \eqref{eq:dom-} tends to $0$ in probability.
        Hence w.h.p., one has $\big|C\,\cap\,(\pts^{n}\setminus\pts^{N_-})\big| \le \frac{\delta/2}{1+\delta} \height{\pts^{n}}$
        (recall that $\delta$ is an arbitrary positive constant, and thus, so is $\frac{\delta/2}{1+\delta}$).
        The upper bound of \eqref{eq:encadrement fixed poisson} yields w.h.p.:
        \begin{align*}
            \left( 1-\frac{\delta/2}{1+\delta} \right) \height{\pts^n} \le \height{\pts^{N_-}} \,,
        \end{align*}
        and then
        \begin{align*}
            &\prob\left[ \height{\pts_\mu^n} \ge (1+\delta)f(n) \right]
            \\&= \prob\left[ \left( 1-\frac{\delta/2}{1+\delta} \right) \height{\pts_\mu^n} \geq (1+\delta/2)f(n) \right]
            \\&\leq \prob\left[ \height{\pts^{N_-}} \ge (1+\delta/2)f(n) \right] + o(1)
            \\&= o(1) \,.
        \end{align*}
    \end{itemize}
    We have thus proved that $\frac{\height{\pts_\mu^n}}{f(n)} \overset{\prob}{\longrightarrow}1$.
    \medskip

 \underline{Uniform integrability.}   
    Finally suppose that for every integer $p$, the sequence $\expec\left[ \frac{\height{\pts_\mu^N}^p}{f(n)^p} \right]$ is bounded in $n$. 
    Thanks to our hypothesis on $f$, the sequence $\expec\left[ \frac{\height{\pts^{N_-}}^p}{f(n)^p} \mathbf{1}_E \right]$ is also bounded in $n$.
    Then \eqref{eq:encadrement fixed poisson}, together with the convexity inequality
    $(a+b)^p \le 2^{p-1}(a^p + b^p)$ for $a,b \ge 0$ and $p \ge 1$, yields:
    \begin{align}\label{eq:UI bound fixed Poisson}
        \expec\left[ \frac{\height{\pts_\mu^{n}}^p}{f(n)^p} \right]
        \leq n^p\cdot\prob(E^c)
        + 2^{p-1} \expec\left[ \frac{\height{\pts^{N_-}}^p}{f(n)^p} \mathbf{1}_E \right]
        + 2^{p-1} \expec\left[ \frac{\big|C\,\cap\,(\pts^{n}\setminus\pts^{N_-})\big|^p}{f(n)^p} \mathbf{1}_E \right].
    \end{align}
    But $n^p\cdot\prob(E^c)$ converges to $0$ (as a consequence of \Cref{lem:Poisson_bounds}, the probability actually decreases at least as fast as $e^{-n^{2\alpha-1}}$), while the second term was already identified as being bounded in $n$. 
    Let us consider the last term, which we split as follows
     \begin{multline}\label{eq:C^p-split}
        \expec\left[ \frac{\big|C\,\cap\,(\pts^{n}\setminus\pts^{N_-})\big|^p}{f(n)^p} \mathbf{1}_E \right] \\
        = \expec\left[ \frac{\big|C\,\cap\,(\pts^{n}\setminus\pts^{N_-})\big|^p}{f(n)^p} \mathbf{1}_E \mathbf{1}_{\height{\pts^{N_-}} \ge n^\beta}\right]
        + \expec\left[ \frac{\big|C\,\cap\,(\pts^{n}\setminus\pts^{N_-})\big|^p}{f(n)^p} \mathbf{1}_E \mathbf{1}_{\height{\pts^{N_-}} < n^\beta}\right]
        \end{multline}
        For the first term, we bound $\big|C\,\cap\,(\pts^{n}\setminus\pts^{N_-})\big|$ by $(n-N_-)$ and, using the Cauchy--Schwarz inequality, we get
        \begin{align}\label{eq:C^p-first-term}
        \expec\left[ \frac{\big|C\,\cap\,(\pts^{n}\setminus\pts^{N_-})\big|^p}{f(n)^p} \mathbf{1}_E \mathbf{1}_{\height{\pts^{N_-}} \ge n^\beta}\right]&\le
        \expec\left[ \frac{(n-N_-)^p}{n^{\beta p}} \frac{\height{\pts^{N_-}}^p}{f(n)^p} \mathbf{1}_E \right] \\ &\le
         \sqrt{ \expec\left[ \frac{(n-N_-)^{2p}}{n^{2\beta p}} \right] \expec\left[ \frac{\height{\pts^{N_-}}^{2p}}{f(n)^{2p}} \mathbf{1}_E \right] }.\notag
        \end{align}
    Since $n-N_-=n^\alpha-(N_- - \expec[N_-])$ and since it is well known that $\frac1{\sqrt{n-n^\alpha}} (N_- - \expec[N_-])$ converges in distribution and in moments to a standard Gaussian random variable, the first expectation under the square root tends to $0$ as $n$ tends to $\infty$ (recall that $\beta>\alpha>\frac12$). 
    Also, it has been observed above that the second expectation under the square root is bounded in $n$. 
    Thus the left-hand side of \eqref{eq:C^p-first-term} tends to $0$ as $n$ tends to $\infty$.
    
    We now consider the second term in \eqref{eq:C^p-split}.
    From \Cref{lem:Poisson_bounds}, one has $N_- \ge n - n^\beta$ outside a set of exponentially small probability. 
    When this holds together with the events $E$ and $\height{\pts^{N_-}}< n^\beta$, one has, for $n$ large enough,
   \[ \frac{n-N_-}{n-\height{\pts^{n}}-1} \le  \frac{n^\beta}{n-n^\beta-1} \le 2n^{\beta-1}.\]
    Hence, using \eqref{eq:dom-}, and writing $S_n$ for a $\binomial{1+\height{\pts_\mu^n}}{2n^{\beta-1}}$ random variable, we get:
    \begin{align*}
    \expec\left[ \frac{\big|C\,\cap\,(\pts^{n}\setminus\pts^{N_-})\big|^p}{f(n)^p} \mathbf{1}_E \mathbf{1}_{\height{\pts^{N_-}} < n^\beta}\right]
    \le \expec\left[ \frac{{S}_n^p}{f(n)^p} \right]+ n^p \, \prob[N_- < n - n^\beta] 
    =\expec\left[ \frac{{S}_n^p}{f(n)^p} \right]+ o(1).
    \end{align*}
    The moments of a $\binomial{M}{q}$ random variable $X$ are easily bounded by $\expec[X^p] \le M^p \, q$, implying that
    \[\expec\left[ \frac{\big|C\,\cap\,(\pts^{n}\setminus\pts^{N_-})\big|^p}{f(n)^p} \mathbf{1}_E \mathbf{1}_{\height{\pts^{N_-}}< n^\beta}\right]
    \le  \expec\left[ \frac{\left( 1+\height{\pts_\mu^n} \right)^p}{f(n)^p} \right] \, 2n^{\beta-1} +o(1).\]
    Using this last estimate, \eqref{eq:UI bound fixed Poisson}-\eqref{eq:C^p-split} and the fact that \eqref{eq:C^p-first-term} tends to $0$, we get that
    \[  \expec\left[ \frac{\height{\pts_\mu^{n}}^p}{f(n)^p} \right]
    \leq \O(1) + 2n^{\beta-1} \expec\left[ \frac{\left( 1+\height{\pts_\mu^{n}} \right)^p}{f(n)^p} \right]. \]
Using again that $(a+b)^p\leq2^{p-1}(a^p+b^p)$ and since $2n^{\beta-1}$ tends to $0$, this implies that $\expec\left[ \frac{\height{\pts_\mu^{n}}^p}{f(n)^p} \right]$ is bounded (for all $p$), proving the uniform integrability of all powers of $\frac{\height{\pts_\mu^{n}}}{f(n)}$.
\end{proof}

\subsection{A connection with monotone subsequences and extreme deviation bounds}

We start by recalling standard definitions.
Let $\sigma$ be a permutation of $\{1,\dots,n\}$.
An increasing subsequence of $\sigma$ is a sequence of indices $i_1<\dots<i_k$ such that $\sigma(i_1)<\dots<\sigma(i_k)$.
The maximum length of an increasing subsequence of $\sigma$ is then denoted by $\LIS{\sigma}$.
We define similarly $\LDS{\sigma}$, the maximum length of a decreasing subsequence of $\sigma$.

\begin{lemma}\label{lem: upper bound LIS LDS}
    Let $\sigma$ be a permutation of $\{1,\dots,n\}$.
    Then
    \begin{equation*}
        \height{\sigma} \le \LIS{\sigma} + \LDS{\sigma} .
    \end{equation*}
\end{lemma}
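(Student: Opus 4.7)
The plan is to extract one increasing and one decreasing subsequence of $\sigma$ directly from a maximal chain in $\lbst{\sigma}$, exploiting the defining BST ordering. Fix a chain $v_0, v_1, \dots, v_h$ from the root to a deepest leaf, so that $h = \height{\sigma}$, and let $w_j$ denote the label of $v_j$. Because $v_{j+1}$ is a direct child of $v_j$, the labels appear in $\sigma$ in the same order as along the chain (each ancestor is inserted before its descendant), so the sequence $(w_0, w_1, \dots, w_h)$ is already a subsequence of $\sigma$.

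The key structural observation is the following: each step $j \to j+1$ is either a \emph{left step} (the child $v_{j+1}$ is the left child of $v_j$, whence $w_{j+1} < w_j$) or a \emph{right step} ($w_{j+1} > w_j$). Moreover, by the BST property applied to the subtree rooted at $v_{i+1}$, a left step at index $i$ forces $w_j < w_i$ for \emph{every} $j > i$, not merely $j = i+1$, and symmetrically a right step at $i$ forces $w_j > w_i$ for every $j > i$. Let $a$ and $b$ be the number of left and right steps respectively, so $a + b = h$.

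From this I would read off the monotone subsequences immediately. Let $\ell_1 < \dots < \ell_a$ be the indices of the left steps. By the structural observation, $w_{\ell_1} > w_{\ell_2} > \dots > w_{\ell_a} > w_h$ (the last inequality because $\ell_a \le h-1$ and step $\ell_a$ is a left step), producing a decreasing subsequence of $\sigma$ of length $a+1$; the dual construction applied to the right-step indices yields an increasing subsequence of length $b+1$. Therefore
\[
\LIS{\sigma} + \LDS{\sigma} \;\ge\; (a+1) + (b+1) \;=\; h + 2 \;>\; \height{\sigma},
\]
which even gives a slightly stronger inequality than stated.

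I do not expect any genuine obstacle: the whole argument hinges on the single, elementary fact that a left step at index $i$ confines every subsequent chain vertex to the left subtree of $v_i$ (and dually for right steps). The only mild bookkeeping point is to append $w_h$ to each of the two constructed sequences so that their lengths are $a+1$ and $b+1$ rather than $a$ and $b$; this works without issue because $\ell_a, r_b \le h-1$, so the index $h$ can always be added at the end.
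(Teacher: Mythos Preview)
Your argument is correct and essentially identical to the paper's: both split a maximal root-to-leaf chain into its left-step and right-step indices, append the final vertex $w_h$ to each part, and read off a decreasing and an increasing subsequence of lengths $a+1$ and $b+1$. The paper states the conclusion more tersely, but the underlying mechanism (a left step at $i$ confines all later chain labels below $w_i$) is the same, and both proofs in fact yield the slightly sharper bound $\LIS{\sigma}+\LDS{\sigma}\ge\height{\sigma}+2$.
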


\begin{proof}
    Let $i_1<\dots<i_k$ be a sequence of integers such that $\sigma(i_1),\dots,\sigma(i_k)$ label nodes on a chain $C$ of $\lbst{\sigma}$.
    Define $\mathcal{I_R}$ (resp.~$\mathcal{I_L}$) as the family of $i_j$'s such that the node following $\sigma(i_j)$ in $C$ lies in its right subtree (resp.~left subtree).
    By construction of the BST, $\mathcal{I_R} \cup \{i_k\}$ and $\mathcal{I_L} \cup \{i_k\}$ form respectively an increasing and a decreasing subsequence of $\sigma$.
    The lemma follows.
\end{proof}

Combining this lemma with \cite[Proposition 3.2]{borga2022permutons}, we get that for any integrable function $\rho$, the quantity $\frac{1}{n}\height{\pts_\rho^N}$ tends to $0$ in probability, as $n\to\infty$.
We will need a more quantitative version of this, valid only for bounded functions $\rho$.
We start with an extreme deviation bound\footnote{
We use the term ``extreme devitation'' since, for the longest increasing subsequence, the usual ``large deviation framework'' consists in studying $\prob\left[ \LIS{\sigma^n} \ge x \sqrt n \right]$ for $x >2$, see \cite{seepalainen1998deviations}.
We look here at much rarer events.} for the longest monotone subsequences.
\begin{lemma}\label{lem: extreme deviation LIS}
  For each integer $n$, let $\sigma^{n}$ be a uniform permutation of $\{1,\dots,n\}$.
  Then we have
    \begin{equation*}
        \prob\left[ \LIS{\sigma^n} \ge \frac{n}{\log n} \right] \le \exp\left( -n + o(n) \right) .
    \end{equation*}
\end{lemma}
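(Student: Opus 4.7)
The plan is to use the standard first-moment / union bound on increasing subsequences of prescribed length, then plug in Stirling-type estimates.

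First, set $k_n := \lceil n/\log n \rceil$. For a uniform permutation $\sigma^n$ and any fixed $k$-tuple of positions $i_1<\dots<i_k$ in $\{1,\dots,n\}$, symmetry gives
\[ \prob\big[\sigma^n(i_1)<\dots<\sigma^n(i_k)\big] = \tfrac{1}{k!}, \]
since all $k!$ relative orders of $(\sigma^n(i_1),\dots,\sigma^n(i_k))$ are equally likely. A union bound over the $\binom{n}{k}$ choices of positions then yields
\[ \prob\big[\LIS{\sigma^n} \ge k\big] \le \binom{n}{k}\frac{1}{k!}. \]

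Next, I would bound this using the elementary inequalities $\binom{n}{k}\le (en/k)^k$ and $1/k! \le (e/k)^k$, giving
\[ \prob\big[\LIS{\sigma^n} \ge k\big] \le \left(\frac{e^2 n}{k^2}\right)^k . \]
Specializing to $k=k_n \sim n/\log n$, we have $e^2 n / k_n^2 \sim e^2 (\log n)^2 / n$, hence
\[ \log \prob\big[\LIS{\sigma^n}\ge k_n\big] \le k_n\big(2+2\log\log n-\log n\big) = -n + O\!\left(\tfrac{n\log\log n}{\log n}\right), \]
which is $-n + o(n)$, as desired.

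There is essentially no obstacle: the argument is a one-shot first moment estimate, and the regime $k=n/\log n$ is exactly where $\binom{n}{k}/k!$ transitions from superpolynomial to $e^{-n(1+o(1))}$. The only thing to double-check is that the asymptotic expansion $k_n(2+2\log\log n-\log n)=-n+o(n)$ absorbs correctly the rounding from $n/\log n$ to $\lceil n/\log n\rceil$, which is harmless since $(\lceil n/\log n\rceil - n/\log n)\log n = O(\log n) = o(n)$.
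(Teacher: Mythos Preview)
Your proof is correct and follows essentially the same approach as the paper: a first-moment/union bound giving $\prob[\LIS{\sigma^n}\ge k]\le \binom{n}{k}/k!$, followed by asymptotic estimation at $k\sim n/\log n$. The only cosmetic difference is that the paper applies Stirling's formula directly to $n!/(k!^2(n-k)!)$, whereas you use the elementary inequalities $\binom{n}{k}\le(en/k)^k$ and $1/k!\le(e/k)^k$; both routes yield the same $-n+o(n)$ in the exponent.
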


\begin{proof}
    This is a straightforward application of the first moment method.
    Write, with $k=\lfloor\frac{n}{\log n}\rfloor$:
    \begin{align*}
        \prob\left[ \LIS{\sigma^n} \ge \frac{n}{\log n} \right]
        &\le \expec\left[ \text{number of increasing subsequences of length } k \text{ in } \sigma^n\right]=\frac{1}{k!}\binom{n}{k}
    \end{align*}
    Now, using Stirling's formula along with $k =\lfloor\frac{n}{\log n}\rfloor = o(n)$, we obtain
    \begin{align*}
        \frac{1}{k!}\binom{n}{k}=\frac{n!}{k!^2(n-k)!}
        &= e^{n\log n - n - 2k\log k -(n-k)\log(n-k) + (n-k) + o(n)} 
        = e^{-n + o(n)} \,.\qedhere
    \end{align*}
\end{proof}

\begin{cor}\label{cor: extreme deviation BST height}
    For any $M>0$ and $\eps>0$, there exists $n_0=n_0(M,\eps)$ such that the following holds.
    For any $0<\zeta\le1$, any function $\rho:[0,1]^2\to[0,\infty)$ bounded by $M$ and supported on some rectangle $[a,b]\times[c,d]$ with $(b-a)(d-c) \le \zeta$, and for any integer $n > n_0/\zeta$:
    \begin{equation*}
        \prob\left[ \height{\pts_\rho^N} > 2\eps \zeta n \right] \le 4\exp\left( -\frac\eps2 \zeta n \log(\zeta n) \right) .
    \end{equation*}
\end{cor}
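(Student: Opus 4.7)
The plan is to combine \Cref{lem: upper bound LIS LDS} and the extreme-deviation estimate of \Cref{lem: extreme deviation LIS} with a Poisson thinning argument that replaces $\rho$ by a constant intensity on the enclosing rectangle, plus the Poisson tail bound from \Cref{lem:Poisson_bounds}. Throughout I write $\lambda_0 := \zeta n$ and $\ell := \lceil \eps \lambda_0 \rceil$.

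I would first set $\rho_+ := M \, \One_{[a,b]\times [c,d]}$, so that $\rho \le \rho_+$ pointwise, and couple $\pts_\rho^N \subseteq \pts_{\rho_+}^N$ by standard Poisson thinning. Any increasing (resp.\ decreasing) subconfiguration of $\pts_\rho^N$ remains one in $\pts_{\rho_+}^N$, so $\LIS{\sigmaP{\pts_\rho^N}} \le \LIS{\sigmaP{\pts_{\rho_+}^N}}$ and likewise for LDS, and \Cref{lem: upper bound LIS LDS} reduces the problem to controlling $\LIS{\sigmaP{\pts_{\rho_+}^N}} + \LDS{\sigmaP{\pts_{\rho_+}^N}}$. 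Since $\rho_+$ is constant on its support, conditionally on $N_+ := |\pts_{\rho_+}^N|$ the associated permutation $\sigmaP{\pts_{\rho_+}^N}$ is uniform of size $N_+$, and $N_+$ is Poisson of parameter $\lambda := nM(b-a)(d-c) \le M \lambda_0$. Since LIS and LDS of a uniform permutation are equidistributed (via reversal), it then suffices to show $\prob[\LIS{\sigmaP{\pts_{\rho_+}^N}} \ge \ell] \le 2 \, e^{-(\eps/2) \lambda_0 \log \lambda_0}$, from which the target $4 \, e^{-(\eps/2)\lambda_0 \log \lambda_0}$ follows by doubling.

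I would then split on the event $\{N_+ > \lambda_0 \log \lambda_0\}$. For the Poisson tail, \Cref{lem:Poisson_bounds} at $x = \lambda_0 \log \lambda_0$ yields
\[
\prob[N_+ > \lambda_0 \log \lambda_0] \le \left( \frac{eM}{\log \lambda_0} \right)^{\lambda_0 \log \lambda_0} e^{-\lambda},
\]
whose logarithm behaves like $-\lambda_0 (\log \lambda_0)(\log \log \lambda_0)$ for large $\lambda_0$, hence is below $-(\eps/2) \lambda_0 \log \lambda_0$ once $\lambda_0$ is large. On the complementary event, the first-moment argument from the proof of \Cref{lem: extreme deviation LIS} gives, uniformly for $k \le \lambda_0 \log \lambda_0$,
\[
\prob[\LIS{\sigma^k} \ge \ell] \le \binom{k}{\ell} \frac{1}{\ell!} \le \left( \frac{k e^2}{\ell^2} \right)^{\ell} \le \left( \frac{e^2 \log \lambda_0}{\eps^2 \lambda_0} \right)^{\eps \lambda_0},
\]
whose logarithm is $-\eps \lambda_0 \log \lambda_0 + \O(\eps \lambda_0 \log \log \lambda_0) \le -(\eps/2) \lambda_0 \log \lambda_0$ for large $\lambda_0$. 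Both conditions on $\lambda_0$ are of the form $\lambda_0 \ge n_0(M,\eps)$, which is exactly the hypothesis $n > n_0/\zeta$.

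The main obstacle is choosing the correct threshold for $N_+$: a naive cutoff at a constant multiple of $\lambda_0$ would only give a Poisson deviation of order $e^{-c(M) \lambda_0}$, much weaker than the target $e^{-(\eps/2) \lambda_0 \log \lambda_0}$. Pushing the cutoff all the way to $\lambda_0 \log \lambda_0$ makes the Poisson tail super-exponential, but one must check that the first-moment bound on $\LIS{\sigma^k}$ still yields $e^{-(\eps/2) \lambda_0 \log \lambda_0}$ at $k = \lambda_0 \log \lambda_0$. This works because the $1/(\ell!)^2$ factor in the first-moment estimate provides an extra $\log \lambda_0$ per element of the putative increasing subsequence, absorbing the $\log \lambda_0$ picked up by enlarging $k$.
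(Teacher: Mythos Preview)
Your proof is correct and follows essentially the same route as the paper: thicken to a constant intensity on the rectangle, use \Cref{lem: upper bound LIS LDS} to pass to $\LIS+\LDS$ of a uniform permutation with Poisson size, split on a large threshold for that size, and combine the Poisson tail from \Cref{lem:Poisson_bounds} with the first-moment bound underlying \Cref{lem: extreme deviation LIS}. The only cosmetic differences are that the paper first rescales the rectangle to $[0,1]^2$ (unnecessary, as you implicitly note), uses the threshold $\eps\lambda_0\log\lambda_0$ instead of your $\lambda_0\log\lambda_0$, and invokes \Cref{lem: extreme deviation LIS} as stated rather than redoing the first-moment estimate by hand.
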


\begin{proof}
    By increasing the size of the rectangle $[a,b]\times[c,d]$, we can assume without loss of generality that $(b-a)(d-c) = \zeta$.
    Now let us stretch $\rho$ from $[a,b]\times[c,d]$ onto $[0,1]^2$.
    Define 
    \[ g : (x,y)\in [a,b]\times[c,d] \mapsto \left( \frac{x-a}{b-a} , \frac{y-c}{d-c} \right)\in [0,1]^2 \,. \]
    Then $\widetilde{\pts} = g\left( \pts_\rho^N \right)$ is a Poisson point process with intensity $n \zeta \rho\circ g^{-1}$ on $[0,1]^2$.
    Moreover, the transformation $g$ does not change the relative order of points, so $\lbst{\widetilde{\pts}}$ and $\lbst{\pts_\rho^N}$ {have the same shape}.
    From now on we work with $\widetilde\pts$.
      
    The density $\rho\circ g^{-1}$ is bounded above by $M$ on $[0,1]^2$.
    Thus, by a standard thickening procedure, we can construct a Poisson point process $\widehat\pts$ with constant intensity $n\zeta M$ such that a.s.~$\widetilde\pts \subset \widehat\pts$.
    Let $\widetilde\sigma$ and $\widehat\sigma$ be the permutations induced by $\widetilde\pts$ and $\widehat\pts$, respectively.
    Using \Cref{lem: upper bound LIS LDS}, a.s.~it holds that:
    \begin{equation}\label{eq: using monotonicity of LIS}
      \height{\widetilde\pts} \le \LIS{\widetilde\sigma} + \LDS{\widetilde\sigma}
      \le \LIS{\widehat\sigma} + \LDS{\widehat\sigma} .
    \end{equation}
    The permutation $\widehat\sigma$ has a random size $\widehat N$, which follows a $\poisson{n\zeta M}$ law.
    Moreover, conditionally to its size, it is uniformly distributed. 
    By \Cref{lem:Poisson_bounds}, for any $n$ large enough such that $\eps\log(\zeta n)\geq M$, we get:
    \begin{equation*}
      \prob\left[\widehat N > \eps \zeta n \log(\zeta n)\right]
      \le e^{-n\zeta M}\left(\frac{eM}{\eps \log(\zeta n)}\right)^{\eps\zeta n\log(\zeta n)}
      \le \exp\left(\eps\zeta n\log(\zeta n)\log\left(\frac{eM}{\eps \log(\zeta n)}\right)\right) .
    \end{equation*}
    For large enough {$\zeta n$ (with a threshold depending on $M$ and $\eps$)}, we have $\log\left(\frac{eM}{\eps \log(\zeta n)}\right)\le-1$, and thus
    \begin{equation}\label{eq:deviation_Nhat}
      \prob\left[\widehat N > \eps \zeta n \log(\zeta n)\right] \le \exp\left(-\eps\zeta n\log(\zeta n)\right)
    \end{equation}
    Define $n'= \lfloor \eps \zeta n\log(\zeta n)\rfloor$ and write $\sigma^{n'}$ for a uniform permutation of $\{1,\dots,n'\}$.
    As $\frac{n'}{\log(n')} \le \eps \zeta n$ for large enough $\zeta n$, we have
    \[
        \prob\left[ \LIS{\widehat\sigma} > \eps\zeta n \right]
        \le \prob\left[ \LIS{\sigma^{n'}} > \frac{n'}{\log(n')} \right] + \prob\left[\widehat N > \eps \zeta n \log(\zeta n)\right] \,.
    \]
    {Using \Cref{lem: extreme deviation LIS} and \Cref{eq:deviation_Nhat}, we get that, for large enough $\zeta n$,}
    \[
        \prob\left[ \LIS{\widehat\sigma} > \eps\zeta n \right]
        \le e^{-n'+o(n')} + e^{-\eps\zeta n\log(\zeta n)}
        \le 2e^{-\frac12 \eps\zeta n\log(\zeta n)} \,.
    \]
    The same holds for $\LDS{\widehat\sigma}$, and {\Cref{eq: using monotonicity of LIS} allows us to conclude the proof of the corollary}.
\end{proof}

\section{Height of BSTs of permuton samples}
\label{sec:height-proof}

Before starting the proof, let us introduce some notation related to the decomposition presented in the introduction (Section~\ref{ssec:proof-overview}).
Let $\pts=\{(x_1,y_1),\ldots,(x_N,y_N)\}$ {be a set of points} in $(0,1)^2$ with distinct $x$- and distinct $y$-coordinates.
For any $\beta\in(0,1)$, write $\pts(\beta)=\pts\cap ([0,\beta]\times[0,1])$ for the set of points in a band of width $\beta$ on the left.
Now write $K_\beta=|\pts(\beta)|$ and let $y_{(1)}<\dots<y_{(K_\beta)}$ be the ordered $y$-coordinates of the points in $\pts(\beta)$.
Then for each integer $0\leq k\leq K_\beta$, define $I_k = (y_{(k)},y_{(k+1)})$ where we used the convention $y_{(0)}=0$ and $y_{(K_\beta+1)}=1$.
In words, $I_0,\dots,I_{K_\beta}$ are the gaps between the points $\{0,y_{(1)},\dots,y_{(K_\beta)},1\}$, enumerated from lowest to highest.
Finally, define
\begin{align*}
    \pts_k(\beta):=\pts\cap \big( \, (\beta,1]\times I_k \big).
\end{align*}

Recall that $\lbst{\pts(\beta)}$ and $\left(\big. \lbst{\pts_k(\beta)} \right)_{0 \le k \le K_\beta}$ are respectively called the \textit{top tree} and the \textit{hanging trees} of $\lbst{\pts}$.
One can see that the top and hanging trees are indeed subtrees of $\lbst{\pts(\beta)}$.
The entire tree can then be reconstructed as follows: 
start with $\lbst{\pts(\beta)}$, and for each $0<k<K_\beta$ do the following.
Write $v_k$, resp.~$v_{k+1}$, for the node labeled $y_{(k)}$, resp.~$y_{(k+1)}$, in $\lbst{\pts(\beta)}$ and notice that necessarily one is an ancestor of the other.
If $v_k$ is deeper than $v_{k+1}$ then attach $\lbst{\pts_k(\beta)}$ to the right of $v_k$, otherwise attach it to the left of $v_{k+1}$.
Finally, attach $\lbst{\pts_0(\beta)}$ to the left of the node labeled $y_{(1)}$ and $\lbst{\pts_{K_\beta}(\beta)}$ to the right of the node labeled $y_{(K_\beta)}$.
The reader can go back to \Cref{fig:bands} for an illustration.
This construction yields the following lemma:
\begin{lemma}\label{lem:height bound}
    Let $\pts$ be a point set of $[0,1]^2$ with distinct $x$- and distinct $y$-coordinates.
    Then for any $\beta\in(0,1)$:
    \begin{align*}
        \height{\pts(\beta)} \le \height{\pts} \le \height{\pts(\beta)} +1+ \max_{0\le k\le K_\beta}\Big\{\height{\pts_k(\beta)}\Big\}\,.
    \end{align*}
\end{lemma}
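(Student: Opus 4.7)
The plan is to read off both inequalities directly from the grafting construction recalled in the paragraph just above the lemma statement, which writes $\lbst{\pts}$ as the top tree $\lbst{\pts(\beta)}$ with the hanging trees $\lbst{\pts_k(\beta)}$ attached below some specific nodes $v_k,v_{k+1}$ of $\lbst{\pts(\beta)}$.

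For the lower bound, the point is that the first $K_\beta$ insertions in the iterative construction of $\lbst{\pts}$ are exactly the points of $\pts(\beta)$ (they have the $K_\beta$ smallest $x$-coordinates), and the BST formed by these first insertions is exactly $\lbst{\pts(\beta)}$. Subsequent insertions only attach new nodes strictly below existing leaves, so every ancestor relation among points of $\pts(\beta)$ is preserved in $\lbst{\pts}$. In particular, the root-to-leaf path realising $\height{\pts(\beta)}$ appears verbatim inside $\lbst{\pts}$, giving $\height{\pts(\beta)}\le\height{\pts}$.

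For the upper bound, pick a deepest node $v$ of $\lbst{\pts}$. Either $v$ corresponds to a point of $\pts(\beta)$, in which case its depth in $\lbst{\pts}$ equals its depth in the embedded top tree and is therefore at most $\height{\pts(\beta)}$; or $v$ corresponds to a point of some $\pts_k(\beta)$. In the latter case, the path from the root of $\lbst{\pts}$ to $v$ splits into three pieces: a descent inside the top tree down to the grafting node ($v_k$ or $v_{k+1}$), which has length at most $\height{\pts(\beta)}$; one edge from this grafting node to the root of the hanging tree $\lbst{\pts_k(\beta)}$; and a descent inside the hanging tree, of length at most $\height{\pts_k(\beta)}$. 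Summing and taking the maximum over $k$ yields
\begin{equation*}
    \height{\pts}\;\le\;\height{\pts(\beta)}+1+\max_{0\le k\le K_\beta}\height{\pts_k(\beta)}.
\end{equation*}

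There is no real obstacle here: the statement is essentially a structural observation, and the only thing one must be careful with is to justify the \enquote{$+1$}, which accounts for the single edge linking the grafting node in $\lbst{\pts(\beta)}$ to the root of a hanging tree. I would not invoke \Cref{lem:chain trick} for either direction, since the chain trick provides bounds in the wrong direction for the lower inequality and is weaker than necessary for the upper inequality; the grafting description already handed to us in the preceding paragraph is sharper and makes both bounds immediate.
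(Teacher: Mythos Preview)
Your proof is correct and follows exactly the approach the paper intends: the lemma is stated immediately after the grafting description with the sentence \enquote{This construction yields the following lemma}, and you have simply written out the details of how the decomposition into top tree plus hanging trees yields both inequalities. Your handling of the \enquote{$+1$} for the grafting edge is precisely the point that needs care, and your remark about not needing \Cref{lem:chain trick} is apt, since the special feature here---that the removed points all have larger $x$-coordinates than the retained ones---is what makes the top tree sit verbatim at the top of $\lbst{\pts}$.
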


\subsection{Controlling the height of the top tree}

\begin{prop}\label{prop:control-top-tree}
    Let $R=[x_1,x_2]\times[y_1,y_2]$ be a rectangle with non-empty interior and $\rho: R \to(0,\infty)$ be a continuous, positive intensity function.
    For each integer $n$, let $\pts^N_\rho$ be a Poisson point process with intensity $n\rho$.
    Let $m\leq M$ be positive real numbers such that $m\leq\rho\leq M$ holds a.e.~on $R$ and write
    \begin{align*}
        \eta=\frac{M-m}{m}\,.
    \end{align*}
    Then for any $\eps>0$, we have
    \begin{align*}
        \lim_{n\to\infty}
        \prob\left[ \left|\frac{\height{\pts_\rho^N}}{c^*\log n}-1\right| > \eta+\eps \right] =0\,.
    \end{align*}
    Moreover, for any $p>0$, the sequence of random variables $\frac{\height{\pts_\rho^N}^p}{\log(n)^p}$ is uniformly integrable.
\end{prop}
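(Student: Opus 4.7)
My plan is to sandwich $\pts^N_\rho$ between two homogeneous Poisson point processes on $R$: $\pts^N_m$ with constant intensity $nm$ and $\pts^N_M$ with constant intensity $nM$. Conditionally on their (Poisson) number of points, both induce uniformly random permutations. Since the area $|R|$ is a positive constant, this number has mean $\Theta(n)$, so Devroye's theorem \cite{devroye1986note} together with \Cref{thm:Poisson fixed} applied to $f(n)=c^*\log n$ (which clearly satisfies \eqref{eq:regularity_f}) yields
\[
\frac{\height{\pts^N_m}}{c^*\log n}\longrightarrow 1 \quad\text{and}\quad \frac{\height{\pts^N_M}}{c^*\log n}\longrightarrow 1
\]
in probability and in $L^p$ for every $p\ge 1$.

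For the lower bound, I would apply \Cref{prop:two fish} with $\rho_-=\rho$ and $\rho_+\equiv M$: since $\inf_R(\rho/M)\ge m/M=1/(1+\eta)$,
\begin{equation*}
    \height{\pts^N_\rho}\succeq \binomial{1+\height{\pts^N_M}}{\tfrac{1}{1+\eta}}-1.
\end{equation*}
Conditioning on $\height{\pts^N_M}$ and using Chernoff concentration of the binomial, combined with $\height{\pts^N_M}\sim c^*\log n$ w.h.p., this gives $\height{\pts^N_\rho}\ge (1-\tfrac{\eps}{2})(c^*\log n)/(1+\eta)$ w.h.p.; since $1-1/(1+\eta)\le\eta$, we get $\height{\pts^N_\rho}/(c^*\log n)\ge 1-\eta-\eps$ w.h.p.

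The upper bound is the delicate part, because by \Cref{rem:height and points} there is no monotonicity of the height under point addition/removal. I would instead apply \Cref{prop:two fish} in the other direction, with $\rho_-\equiv m$ and $\rho_+=\rho$, to obtain
\begin{equation*}
    \height{\pts^N_m}\succeq \binomial{1+\height{\pts^N_\rho}}{\tfrac{1}{1+\eta}}-1.
\end{equation*}
On the event $\bigl\{\height{\pts^N_\rho}\ge(1+\eta+\eps)c^*\log n\bigr\}$, the binomial has conditional mean at least $\tfrac{1+\eta+\eps}{1+\eta}c^*\log n=(1+\eps')c^*\log n$ for some $\eps'=\eps'(\eta,\eps)>0$, so Chernoff concentration forces $\height{\pts^N_m}\ge (1+\eps'/2)c^*\log n$ with high conditional probability; this contradicts $\height{\pts^N_m}\sim c^*\log n$ in probability unless the conditioning event has asymptotically vanishing probability. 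Combined with the lower bound, this proves the first convergence stated in the proposition.

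For uniform integrability, it suffices to control tails. I would combine the same stochastic domination with Devroye's super-polynomial tail bound \cite[Lemma~3.1]{devroye1986note} (transferred to $\pts^N_m$ via \Cref{thm:Poisson fixed}). The union bound
\begin{equation*}
    \prob\bigl[\height{\pts^N_\rho}\ge T\bigr]\le \prob\Bigl[\height{\pts^N_m}\ge \tfrac{T+1}{2(1+\eta)}\Bigr]+\prob\Bigl[\binomial{T+1}{\tfrac{1}{1+\eta}}\le \tfrac{T+1}{2(1+\eta)}\Bigr]
\end{equation*}
shows that the second (Chernoff) term decays exponentially in $T$, while the first term decays super-polynomially in $n$ once $T$ is a large enough multiple of $\log n$. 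Integrating these tails bounds all polynomial moments of $\height{\pts^N_\rho}/\log n$ uniformly in $n$, which is equivalent to uniform integrability of all powers.
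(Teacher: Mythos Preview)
Your proposal is correct and follows essentially the same route as the paper: apply \Cref{prop:two fish} once with $\rho_+\equiv M$ to get the lower bound and once with $\rho_-\equiv m$ to get the upper bound, then use Devroye's result for the homogeneous processes and binomial concentration; for uniform integrability the paper also leverages the domination \eqref{eq:domin top bst binom} together with \cite[Lemma~3.1]{devroye1986note}, bounding moments directly rather than via the tail integral you sketch, but the content is the same. One cosmetic remark: you invoke \Cref{thm:Poisson fixed} for $\pts^N_m$ and $\pts^N_M$, which as stated applies to permutons; the paper instead simply conditions on the Poisson number of points (yielding a uniform permutation of size $\poisson{n\zeta m}$ or $\poisson{n\zeta M}$) and applies Devroye's theorem directly, which is slightly cleaner here.
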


\begin{proof}
    Write $\zeta=(x_2-x_1)(y_2-y_1) >0$ for the area of $R$.
    Note that
    \begin{align*}
        \frac{m}{M}=1+\frac{m-M}{M}\geq1-\frac{M-m}{m}=1-\eta
        \qquad;\qquad
        \frac{M}{m}=1+\frac{M-m}{m}=1+\eta\,.
    \end{align*}
    Using Proposition~\ref{prop:two fish} with $\rho_-=n\rho$ and $\rho_+=nM$ on $R$, we obtain
    \begin{align*}
        \height{\pts_\rho^N}
        \succeq
        \binomial{1+\height{\pts_+}}{\frac{m}{M}}-1,
    \end{align*}
    where $\pts_+ := \pts_{\rho_+}^N$. 
    Since $\rho_+$ is a constant density, the tree $\lbst{\pts_+}$ is the BST of a uniform permutation of random size $\poisson{n\zeta M}$.
    According to \cite[Theorem 5.1]{devroye1986note}, $\height{\pts_+}$ then behaves as $c^*\log(|\pts_+|)$ as $n\rightarrow\infty$ in probability, which leads to
    \begin{align*}
        1+\height{\pts_+}
        &= c^*\log(n\zeta M)+o_\prob(\log n)=c^*\log n+o_\prob(\log n) \,.
    \end{align*}
    Using that a $\binomial{a\log n}{m/M}$ random variable is concentrated around its mean $(am/M)\log n$, we get:
    \begin{align*}
        \height{\pts_\rho^N}
        \geq \frac{m}{M}
        \big( c^*\log n - o_\prob(\log n) \big)
        \geq \big(1-\eta-o_\prob(1)\big)c^*\log n\,.
    \end{align*}
    Similarly, using Proposition~\ref{prop:two fish} with $\rho_-=nm$ and $\rho_+=n\rho$ we obtain
    \begin{align}\label{eq:domin top bst binom}
        \height{\pts_-}
        \succeq
        \binomial{1+\height{\pts_\rho^N}}{\frac{m}{M}}-1,
    \end{align}
    where $\pts_- := \pts_{\rho_-}^N$. 
    As before, we observe that $\lbst{\pts_-}$ is the BST of a uniform permutation of random size $\poisson{n\zeta m}$.
    This implies
    \begin{align*}
        \height{\pts_\rho^N}
        \leq \frac{M}{m}c^*\log(n\zeta m)+o_\prob(\log n)=\big(1+\eta+o_\prob(1)\big) c^*\log n \,,
    \end{align*}
    which concludes the proof of the first claim.

    For the uniform integrability claim, let us fix $p>0$ and establish boundedness of $\expec\left[ \frac{\height{\pts_\rho^N}^p}{\log(n)^p} \right]$ in $n$.
    Conditionally given $\height{\pts_\rho^N}$, write $S_n+1$ for a $\binomial{1+\height{\pts_\rho^N}}{\frac{m}{M}}$ random variable.
    Then, using Hoeffding's inequality:
    \begin{align*}
        \prob\left[ \left. S_n < \frac{m}{2M}\left(1+\height{\pts_\rho^N}\right)-1 \;\right|\, \height{\pts_\rho^N} \right] \leq e^{-\frac{m^2}{2M^2}\left(1+\height{\pts_\rho^N}\right)}
    \end{align*}
    and therefore, for any $n\geq e$:
    \begin{align*}
        &\expec\left[ \frac{\height{\pts_\rho^N}^p}{\log(n)^p} \right]
        \\&\leq \expec\left[ \frac{\height{\pts_\rho^N}^p}{\log(n)^p} \mathbf{1}_{S_n < \frac{m}{2M}\left(1+\height{\pts_\rho^N}\right)-1} \right]
        + \expec\left[ \frac{\height{\pts_\rho^N}^p}{\log(n)^p} \mathbf{1}_{S_n \ge \frac{m}{2M}\left(1+\height{\pts_\rho^N}\right)-1} \right]
        \\&\leq \expec\left[ \height{\pts_\rho^N}^p e^{-\frac{m^2}{2M^2}\left(1+\height{\pts_\rho^N}\right)} \right]
        + \expec\left[ \frac{\big((2M/m)\cdot(S_n+1)-1\big)^p}{\log(n)^p} \right] \,.
    \end{align*}
    Since the function $x\mapsto x^pe^{-\frac{m^2}{2M^2}(1+x)}$ is bounded over $\bR_+$, the first term is bounded in $n$.
    As for the second term, we use \eqref{eq:domin top bst binom} along with $(a+b)^p\leq2^{p-1}(a^p+b^p)$ to deduce:
    \begin{align*}
        \expec\left[ \frac{\big((2M/m)\cdot(S_n+1)-1\big)^p}{\log(n)^p} \right]&\le \left(\frac{2M}{m}\right)^p\expec\left[ \frac{(S_n+1)^p}{\log(n)^p} \right]
        \\&\le 2^{2p-1}\left(\frac{M}{m}\right)^p\left(\expec\left[ \frac{\height{\pts_-}^p}{\log(n)^p} \right]+\frac{1}{\log(n)^p}\right)
    \end{align*}
    which is bounded in $n$ by \cite[Lemma 3.1]{devroye1986note} and Poisson estimates.
    This concludes the proof of the uniform integrability claim.
\end{proof}

The weakness of the previous proposition is that $\eta$, which depends on the rectangle under consideration, might be big.
In the next statement we show that, for continuous positive densities $\rho$, it is possible to choose rectangles for which the corresponding $\eta$ is small.

\begin{cor}\label{cor:control-top-tree}
    Let $D$ be a compact domain in the plane and $\rho: D \to(0,\infty)$ be a continuous, positive intensity function.
    Then for any $\eps>0$, there exists $\beta>0$ such that for any $\beta'\le\beta$ and any rectangle $R=[x_1,x_1+\beta']\times[y_1,y_2]$ with non-empty interior contained in $D$:
    \begin{align*}
        \lim_{n\rightarrow\infty} \prob\left[ \left|\frac{\height{\pts_\rho^N\cap R}}{c^*\log n}-1\right| >\eps \right] =0 \,.
    \end{align*}
\end{cor}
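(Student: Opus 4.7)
The plan is to approximate $\rho$ on the narrow vertical strip $R$ by a density depending only on $y$, for which the induced permutation is always uniformly distributed. The obstacle addressed by this trick is that the vertical extent of $R$ is arbitrary, so $\rho$ may vary significantly on $R$ and a direct application of \Cref{prop:control-top-tree} would give an uncontrolled ratio $\eta = (M-m)/m$. However, replacing $\rho$ by $\tilde\rho(x,y) := \rho(x_1,y)$ kills all $x$-variation, and for such a density the associated permutation is uniform regardless of how $\tilde\rho$ varies in $y$, so that \cite[Theorem 5.1]{devroye1986note} yields the $c^*\log n$ asymptotics directly.

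More precisely, let $m_0 := \min_D \rho > 0$. Given $\eps > 0$, choose $\eta' > 0$ small enough that $\tfrac{1-\eta'}{1+\eta'} \ge 1-\eps$ and $\tfrac{1+\eta'}{1-\eta'} \le 1+\eps$ (e.g.\ $\eta' = \eps/4$). By uniform continuity of $\rho$ on the compact set $D$, there exists $\beta > 0$ such that $|\rho(x,y) - \rho(x',y)| \le \eta' m_0$ for all $(x,y),(x',y) \in D$ with $|x - x'| \le \beta$. Then for any $\beta' \le \beta$ and any $R = [x_1, x_1+\beta'] \times [y_1,y_2] \subseteq D$, setting $\tilde\rho(x,y) := \rho(x_1,y)$ and using $\tilde\rho \ge m_0$ gives $(1-\eta')\tilde\rho \le \rho \le (1+\eta')\tilde\rho$ on $R$.

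Let $\pts^\pm$ be Poisson point processes on $R$ of respective intensities $n(1\pm\eta')\tilde\rho$. Since $\tilde\rho$ depends only on $y$, the joint density of a point of $\pts^\pm$ factorizes so that, conditionally on $|\pts^\pm|$, the $x$-coordinates are i.i.d.\ uniform on $[x_1,x_1+\beta']$ and independent from the $y$-coordinates. The induced permutation is therefore uniform, and since $|\pts^\pm|$ is Poisson of parameter $n\lambda^\pm$ with $\lambda^\pm = (1\pm\eta')\beta'\int_{y_1}^{y_2}\rho(x_1,y)\,dy > 0$, \cite[Theorem 5.1]{devroye1986note} gives $\height{\pts^\pm}/(c^*\log n) \overset{\prob}{\longrightarrow} 1$. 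Applying \Cref{prop:two fish} with $\rho_- = n\rho$ and $\rho_+ = n(1+\eta')\tilde\rho$ on $R$ (so $\inf_R \rho_-/\rho_+ \ge (1-\eta')/(1+\eta')$) gives
\[ \height{\pts_\rho^N \cap R} \succeq \binomial{1+\height{\pts^+}}{\tfrac{1-\eta'}{1+\eta'}} - 1, \]
and binomial concentration combined with the above convergence yields the lower bound $\height{\pts_\rho^N \cap R} \ge (1-\eps)c^*\log n + o_\prob(\log n)$. Swapping the roles of the two intensities produces
\[ \height{\pts^-} \succeq \binomial{1+\height{\pts_\rho^N \cap R}}{\tfrac{1-\eta'}{1+\eta'}} - 1, \]
which inverts to the matching upper bound.

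The most delicate step is this inversion, where the stochastic domination yields $\height{\pts^-}$ above a binomial whose parameters involve the very quantity $\height{\pts_\rho^N\cap R}$ we wish to bound from above. One argues contrapositively: if $\height{\pts_\rho^N\cap R}$ were to exceed $(1+\eps)c^*\log n$ with non-vanishing probability, then binomial concentration would force $\height{\pts^-}$ to exceed $c^*\log n$ with non-vanishing probability as well, contradicting the convergence $\height{\pts^-}/(c^*\log n) \overset{\prob}{\longrightarrow} 1$ established above. This standard inversion is already carried out in the proof of \Cref{prop:control-top-tree} and transfers to the present setting without modification.
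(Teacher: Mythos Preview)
Your proof is correct and rests on the same idea as the paper's: on a thin vertical strip, $\rho(x,y)$ is uniformly close in ratio to the horizontally-constant density $\tilde\rho(x,y)=\rho(x_1,y)$, and such a density induces a uniformly random permutation, so the binomial comparison of \Cref{prop:two fish} transfers Devroye's $c^*\log n$ asymptotics. The paper packages this slightly differently---it performs an explicit change of variables $y\mapsto g(y)$ that flattens $\tilde\rho$ to a constant and then invokes \Cref{prop:control-top-tree}---whereas you bypass both the change of variables and that proposition by observing directly that $y$-only densities yield uniform permutations; the two routes are equivalent, with yours being marginally more direct.
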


In particular, taking $x_1=y_1=0$ and $y_2=1$, the tree $\lbst{\pts_\rho^N\cap R}$ is the top tree $\lbst{\pts_\rho^N(\beta)}$ defined at the beginning of \Cref{sec:height-proof}.

\begin{proof}
    Let $\eps>0$ and assume that $\eps< \min_D\rho$. 
    By uniform continuity of $\rho$, find $\beta>0$ such that for any $(x,y),(x',y')\in D$:
    \begin{equation*}
        |x-x'| + |y-y'| \le \beta \quad\Longrightarrow\quad |\rho(x,y)-\rho(x',y')| \le \eps .
    \end{equation*}
    Then fix $\beta'\le\beta$ and consider any rectangle $R=[x_1,x_1+\beta']\times[y_1,y_2]$ contained in $D$.
    Define 
    \[f : y \in[y_1,y_2] \mapsto \int_{y_1}^y \rho(x_1,t)dt 
    \quad\text{ and }\quad 
    g : y\in[y_1,y_2] \mapsto y_1 + (y_2-y_1)f(y)/f(y_2).\]
    The function $g$ is a $\mathcal{C}^1$ increasing map from $[y_1,y_2]$ onto itself.
    Let $\widetilde\pts$ denote the set of points obtained after applying the transformation 
    $(x,y) \mapsto \left(x,g(y) \right)$
    to $\pts^N_\rho\cap R$.
    This transformation does not change the relative orders of points, therefore $\lbst{\widetilde\pts}$ and $\lbst{\pts^N_\rho\cap R}$ {have the same shape}.
    Additionally, $\widetilde\pts$ follows the law of a Poisson point process with intensity 
    \[ n\, \frac{\rho(x,g^{-1}(y))}{g'(g^{-1}(y))} =  n\, \frac{f(y_2)}{y_2-y_1} \frac{\rho(x,g^{-1}(y))}{\rho(x_1,g^{-1}(y))} \] 
    on $R$.
    Thus we can apply \Cref{prop:control-top-tree} with 
    \[ m= \frac{f(y_2)}{y_2-y_1}
    \left(1-\frac{\eps}{\min_D\rho}\right)
    \,,\quad 
    M= \frac{f(y_2)}{y_2-y_1} \left(1+\frac{\eps}{\min_D\rho}\right)
    \,,\quad 
    \eta=\frac{2\eps}{\min_D\rho-\eps} \,, \]
    to obtain:
    \begin{align*}
        \lim_{n\rightarrow\infty}\prob\left[ \left|\frac{\height{\tilde\pts}}{c^*\log n}-1\right| >\eta+\eps \right] =0\,.
    \end{align*}
    Since this holds for any small enough $\eps>0$ and since $\eta$ goes to $0$ when $\eps$ goes to $0$, the result follows.
\end{proof}

\subsection{Some bounds on the hanging trees}

As above, let $\pts^N$ be a Poisson point process on $[0,1]^2$ with intensity $n\mu$.
We use the notations at the beginning of \Cref{sec:height-proof}.
Moreover, for each $k \le |\pts^N(\beta)|$, we let $\zeta_k=|I_k|$ be the size of the $k$-th gap, and $\mathcal P^N_k(\beta)$ be the points of $\pts^N$ in the horizontal band $(\beta,1] \times I_k$.
The sizes of the bands and the number of points in each band are then controlled by the following proposition.

\begin{prop}\label{prop:tight rows}
    Let $\mu$ be a permuton.
    Assume that there exists $\beta>0$ such that $\mu_{|[0,\beta]\times[0,1]}$ has a continuous and positive density $\rho:[0,\beta]\times[0,1]\rightarrow(0,\infty)$.
    Then the following holds.
    \begin{enumerate}
      \item \hypertarget{item1}{There exists $\alpha$ such that 
        \[ \lim_{n \to +\infty} \prob\left[ \max_k \, \zeta_k > \alpha\frac{\log n}{n} \right] =0.\]}
       \item \hypertarget{item2}{The sequence of random variables
        \begin{align*}
            \left(\frac{1}{\log n} \max_{0\leq k\leq|\pts(\beta)|}\big|{\pts_k^N(\beta)}\big| \right)^p
        \end{align*}
        is uniformly integrable for any $p \ge 1$.}
    \end{enumerate}
\end{prop}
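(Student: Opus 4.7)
The plan is to reduce both items to standard Poisson tail estimates, exploiting the Poisson structure of the construction.

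For item~(1), I would first note that the projection of $\pts^N\cap([0,\beta]\times[0,1])$ onto the second coordinate is, by the mapping theorem, a Poisson point process on $[0,1]$ with intensity $y\mapsto n\int_0^\beta\rho(x,y)\,dx$, which by continuity and positivity of $\rho$ is bounded below by $n\beta m$ for some $m>0$. For any interval $I\subseteq[0,1]$ of length $\ell$, the probability that no $y_{(k)}$ belongs to $I$ is then at most $\exp(-n\beta m\ell)$. By partitioning $[0,1]$ into $\lceil 2/\ell\rceil$ intervals of length $\ell/2$ and noting that any gap of length exceeding $\ell$ contains at least one of them entirely, a union bound gives
\[
\prob\bigl[\max_k\zeta_k>\ell\bigr] \le \lceil 2/\ell\rceil\exp(-n\beta m\ell/2).
\]
With $\ell=\alpha\log n/n$ this is $O(n^{1-\alpha\beta m/2}/\log n)$, which proves item~(1) for $\alpha>2/(\beta m)$ and, importantly, can be made to decay polynomially at any prescribed rate by choosing $\alpha$ large.

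For item~(2), the key structural observation is that, conditionally on $\pts^N\cap([0,\beta]\times[0,1])$ (and hence on the gaps $I_k$), the hanging-tree sizes $|\pts^N_k(\beta)|$ are \emph{independent} $\poisson{\lambda_k}$ variables with
\[
\lambda_k = n\,\mu\bigl((\beta,1]\times I_k\bigr) \le n\,\mu\bigl([0,1]\times I_k\bigr) = n\zeta_k,
\]
the last equality using the uniform marginals of a permuton. Fix $p\ge 1$ and set $q:=\lceil p\rceil+1$; it suffices to show $\sup_n\expec[(\max_k|\pts^N_k(\beta)|/\log n)^q]<\infty$. My plan is to split according to the event
\[
A := \bigl\{\max_k\zeta_k\le\alpha\log n/n\bigr\}\cap\{K_\beta\le 2\beta n\},
\]
where $\alpha=\alpha(q)$ is chosen large enough that $\prob[A^c]\le n^{-4q}$ (possible by item~(1) and by \Cref{lem:Poisson_bounds} applied to $K_\beta\sim\poisson{n\beta}$). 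On $A$, all $\lambda_k\le\alpha\log n$ and there are at most $2\beta n+1$ hanging trees, so \Cref{lem:Poisson_bounds} and a union bound give, for $t\ge e\alpha$,
\[
\prob\bigl[\max_k|\pts^N_k(\beta)|>t\log n,\ A\bigr] \le (2\beta n+1)\,n^{t\log(e\alpha/t)-\alpha}.
\]
Since $t\log(e\alpha/t)\to-\infty$ as $t\to\infty$, integrating this tail against $qt^{q-1}$ keeps $\expec[(\max_k|\pts^N_k(\beta)|/\log n)^q\mathbf{1}_A]$ bounded in $n$. On $A^c$, I would use the crude bound $\max_k|\pts^N_k(\beta)|\le|\pts^N|\sim\poisson n$, whose $2q$-th moment equals $(1+o(1))n^{2q}$ by \eqref{eq:moment_poisson}, and Cauchy--Schwarz:
\[
\expec\bigl[(\max_k|\pts^N_k(\beta)|)^q\mathbf{1}_{A^c}\bigr] \le \sqrt{\expec[|\pts^N|^{2q}]\cdot\prob[A^c]} = O(n^q\cdot n^{-2q}) = O(n^{-q}),
\]
which is $o(\log^q n)$ after the normalization.

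The hard part is item~(2): a union bound over $\Theta(n)$ hanging trees is effective only because each has $\poisson{O(\log n)}$ size, which is precisely the content of item~(1). The uniform integrability moreover forces us to revisit item~(1) with $\alpha$ depending on $p$, so that $\prob[A^c]$ decays polynomially fast enough to dominate the rough $\poisson n$ moment estimate on the bad event via Cauchy--Schwarz.
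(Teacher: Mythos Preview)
Your proof is correct and follows the same broad outline as the paper's: Poisson tail estimates plus a covering/union-bound argument for item~(1), then a good/bad event split together with Cauchy--Schwarz on the bad event for item~(2).

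The one genuine difference worth noting is how item~(2) is set up. You condition on $\pts^N(\beta)$ and use that the hanging-tree sizes are, conditionally, independent $\poisson{\lambda_k}$ variables with $\lambda_k\le n\zeta_k\le\alpha\log n$ on the good event; the union bound then runs over the (random but $\le 2\beta n+1$) number of gaps. The paper instead introduces a \emph{fixed} grid of cells $D_i=(\beta,1]\times[i/\ell,(i+1)/\ell]$ with $\ell=\lceil n/(a\log n)\rceil$, and observes that under the good event every band $(\beta,1]\times I_k$ is covered by at most two adjacent $D_i$'s, so that $\max_k|\pts^N_k(\beta)|\le 2\max_i|\pts^N\cap D_i|$; the union bound then runs over the deterministic collection of $D_i$'s, each containing $\poisson{n\mu(D_i)}$ points with $n\mu(D_i)\le n/\ell\le a\log n$. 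The paper's device sidesteps the conditioning entirely, which makes the layer-cake computation marginally cleaner, but your conditional argument is perfectly valid (since the good event $A$ is $\pts^N(\beta)$-measurable and the two halves of the PPP are independent) and arguably more transparent about why the permuton marginal condition $\mu([0,1]\times I_k)=\zeta_k$ is exactly what is needed. Both approaches require revisiting item~(1) with a $p$-dependent $\alpha$, which you correctly flag.
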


\begin{proof}
    Let $\ell=\lceil n/ (a\log n) \rceil$, where the constant $a\ge1$ will be specified later, and let us divide vertically the rectangles $[0,\beta]\times[0,1]$ and $(\beta,1] \times [0,1]$, each into $\ell$ cells of the same size. 
    Namely we set, for $0\le i <\ell$,
    \[C_i=\big[0,\beta\big]\times \big[\tfrac{i}{\ell},\tfrac{i+1}{\ell}\big], 
         \quad D_i=\big(\beta,1\big] \times \big[\tfrac{i}{\ell},\tfrac{i+1}{\ell}\big].\]
    For each $i$, the probability that $\pts^N \cap C_i$ is empty equals $e^{-n \mu(C_i)} \le e^{-n \frac{\beta m}{\ell}}$, where $m$ is a lower bound for the density $\rho$ on $[0,\beta]\times[0,1]$.    
    Call $E$ the event that all $C_i$ contain at least one point of $\pts^N$.
    It follows from the above computation along with a union bound that
    \begin{align}\label{eq:prob_un_Ci_vide}
       \prob\big[ E^c \big]= \prob\Big[ \exists i:\, \pts^N \cap C_i = \emptyset \Big]
        &\leq \ell\, e^{-n \frac{\beta m}{\ell}}\leq \left\lceil \frac{n}{a\log n} \right\rceil e^{-\beta  m a \log n 
        {+ o(\log n)}
        }
    \end{align}
    as $n\to\infty$.
    When $E$ is satisfied, for all $k$ it holds that $\zeta_k \le \frac2{\ell} \le \frac{2a \log n}n$, implying
    \[\prob\left[ \max_k \, \zeta_k > \frac{2a \log n}n \right]
    \le \prob\big(E^c\big) \le \left\lceil \frac{n}{a\log n} \right\rceil e^{-\beta  m a \log n + o(\log n)}.\]
    The latter bound tends to $0$ if we choose $a >1/(\beta m)$, showing item~\itemlink{1}.
    
    For item~\itemlink{2} we observe that, assuming $E$, any given band $(\beta,1] \times I_k$ intersects at most two cells $D_i$, implying
    \[ \max_{0\leq k\leq|\pts(\beta)|} \big| \pts_k^N(\beta) \big| \le 2 \max_{0 \leq i \leq \ell-1} \big| \pts^N \cap D_i \big|.\]
    The random variables $\big| \pts^N \cap D_i \big|$ are $\poisson{n \mu(D_i)}$ distributed.
    Thanks to the bounds $\mu(D_i) \le \mu( [0,1] \times [\tfrac{i}{\ell},\tfrac{i+1}{\ell}])=\tfrac1{\ell} \le \tfrac{a\log n}{n}$, this implies the following estimates for $b>0$:
    \begin{align*}
        \prob\left[ \max_{0\leq k\leq|\pts(\beta)|} \big| \pts_k^N(\beta) \big| \ge 2 b \log n \,,\, E \right]
        &\le \prob\left[ \max_{0 \leq i \leq \ell-1} \big| \pts^N \cap D_i \big| \ge  b \log n \right]
        \\& \le \ell \, \prob\Big[ \poisson{a \log n} \ge b \log n \Big] .
    \end{align*}
    Using \Cref{lem:Poisson_bounds}, for $b > a$ we deduce
    \begin{equation}\label{eq:bound_max_geq_2blogn}
        \prob\left[ \max_{0\leq k\leq|\pts(\beta)|} \big| \pts_k^N(\beta) \big| \ge 2b \log n \,,\, E \right]
        \le \left( \frac{e a}{b} \right)^{b \log n} \left\lceil \frac{n}{a\log n}\right\rceil e^{-a \log n}. 
    \end{equation}
    For $a \geq 1$ and $b \ge e a$ the right-hand side is uniformly bounded by $(ea/b)^b$ for any $n\geq e$.

    We now have all the necessary estimates to conclude the proof of item~\itemlink{2}.
    Recall that we want to prove uniform integrability of all powers of $\frac{1}{\log n} \max_{0\leq k\leq|\pts(\beta)|} | \pts_k^N(\beta) |$, which is equivalent to boundedness in $n$ of all its moments.
    Fix $p > 0$. 
    We have, using the layer cake representation and~\eqref{eq:prob_un_Ci_vide} and~\eqref{eq:moment_poisson}:
    \begin{multline}
      \label{eq:bound_pth_moment_max}
      \expec\left[\left( \frac{\max_{0\leq k\leq|\pts(\beta)|} \big| \pts_k^N(\beta)\big|}{\log n} \right)^p \right]
     \le \expec\left[ \big|\pts^N\big|^p \Big] \prob\big[ E^c \right]
     + \expec\left[\left( \frac{\max_k \big| \pts_k^N(\beta)\big|}{\log n}  \right)^p \One_E \right] \\
     \le n^p (1+o(1)) 
     \left\lceil \frac{n}{a\log n} \right\rceil e^{-\beta  m a \log n
     {+o(\log n)}
     } + p \int_0^{\infty} s^{p-1}
    \prob\left[ \frac{\max_k \big| \pts_k^N(\beta)\big|}{\log n} \ge s ,  E \right] ds .
    \end{multline}
    We choose $a>\frac{p+1}{\beta m}$, so that the first term tends to $0$, and thus, is a bounded sequence in $n$.
    The second term is bounded as follows, using \eqref{eq:bound_max_geq_2blogn} for $s \ge 2ea$ and simply bounding the probability by $1$ otherwise:
    \[\int_{0}^{\infty} s^{p-1}
    \prob\left[ \frac{\max_{0\leq k\leq|\pts(\beta)|} \big| \pts_k^N(\beta) \big|}{\log n} \ge s ,  E \right] ds
    \le \int_{0}^{\infty} s^{p-1} \min\left(1,\left(\frac{2ea}{s} \right)^{s/2}\right) ds \,.
     \]
     This integral is finite and independent of $n$, and we conclude that \eqref{eq:bound_pth_moment_max} is bounded in $n$. 
     The proposition is proved.
\end{proof}

Item~\itemlink{1} can be further used to control the maximal height of a hanging tree in $\bst{\pts^N_\mu}$.

\begin{prop}\label{prop:height_hanging_trees}
  Let $\mu$ be a permuton satisfying \assumption{1}, i.e.~$\mu$ has an upper bounded density $\rho$ on $[0,1]$, which is positive and continuous on $[0,\beta]\times[0,1]$ for some $\beta>0$. 
  Then we have the following convergence in probability as $n$ goes to infinity:
  \[ \frac1{\log n} \max_{0\leq k\leq|\pts(\beta)|}\Big\{\height{\pts^N_k(\beta)}\Big\} \longrightarrow 0.\]
\end{prop}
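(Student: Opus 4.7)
The plan is to combine item \itemlink{1} of \Cref{prop:tight rows}, which uniformly bounds the widths $\zeta_k$ of the bands $I_k$, with the extreme deviation bound of \Cref{cor: extreme deviation BST height} applied separately to each hanging tree, and to finish via a union bound over the $O(n)$ bands. The key observation is that, under item \itemlink{1}, every $\zeta_k$ is at most $\alpha \log n / n$ w.h.p., so each hanging tree is built from a Poisson point process on a rectangle of small area, to which \Cref{cor: extreme deviation BST height} applies with $\zeta = \alpha \log n / n$.

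More concretely, one first fixes $\alpha$ as given by item \itemlink{1} and sets $E_1 := \{\max_k \zeta_k \le \alpha \log n / n\}$, so that $\prob[E_1] \to 1$; one also introduces $E_2 := \{|\pts^N| \le 2n\}$, which by Poisson concentration holds w.h.p.~and bounds the number of bands by $2n+1$. By Assumption \assumption{1}, the density $\rho$ of $\mu$ is upper-bounded by some $M < \infty$ on $[0,1]^2$. Conditioning on $\pts^N \cap ([0,\beta] \times [0,1])$ fixes the intervals $I_k$; by the independence of Poisson point processes on disjoint sets, the hanging trees $\pts^N_k(\beta)$ become, under this conditioning, independent Poisson processes with intensities bounded above by $M$ and supported on the rectangles $(\beta, 1] \times I_k$. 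For any fixed $\eps > 0$, applying \Cref{cor: extreme deviation BST height} with $\zeta = \alpha \log n / n$ on the event $E_1$ yields, for $n$ large enough,
\[ \prob\left[ \height{\pts^N_k(\beta)} > 2\eps\alpha \log n \,\Big|\, \pts^N \cap ([0,\beta] \times [0,1]) \right] \mathbf{1}_{E_1} \le 4 \exp\left( -\tfrac{\eps}{2}\alpha \log n \log(\alpha \log n) \right). \]
A union bound over the at most $2n+1$ bands on $E_1 \cap E_2$ then gives
\[ \prob\left[ \max_k \height{\pts^N_k(\beta)} > 2\eps\alpha \log n \right] \le 4(2n+1) \exp\left( -\tfrac{\eps}{2}\alpha \log n \log(\alpha \log n) \right) + \prob[E_1^c \cup E_2^c], \]
which tends to $0$ as $n \to \infty$. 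Since $\eps > 0$ is arbitrary, this will imply the desired convergence in probability.

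The main challenge is that the union bound introduces a factor of order $n$, so the individual deviation bound must decay faster than $n^{-1}$. This is comfortably the case: the bound $\exp(-\Omega(\log n \log \log n))$ provided by \Cref{cor: extreme deviation BST height} dominates $\exp(\log n)$, precisely because the relevant bands have area $\Theta(\log n / n)$ rather than $\Theta(1/n)$, so that the parameter $\zeta n$ in the corollary grows like $\log n$ rather than staying bounded. The remaining technical point, handling the conditioning on the left-band points, is immediate from the independence of Poisson processes on disjoint sets.
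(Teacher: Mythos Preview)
Your proof is correct and follows essentially the same approach as the paper's own proof: condition on the left-band points, invoke item \itemlink{1} of \Cref{prop:tight rows} to bound all $\zeta_k$ by $\alpha\log n/n$, apply \Cref{cor: extreme deviation BST height} with $\zeta=\alpha\log n/n$ to each hanging tree, and finish with a union bound over the $O(n)$ bands. The only cosmetic differences are that the paper bounds the number of bands via $|\pts(\beta)|<n$ rather than your $|\pts^N|\le 2n$, and reparametrizes $\eps$ so that the threshold reads $\eps\log n$ instead of $2\eps\alpha\log n$.
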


\begin{proof}
From \Cref{prop:tight rows}, item~\itemlink{1}, there exists $\alpha>0$ such that $\max_k \zeta_k < \alpha\frac{\log n}{n}$ w.h.p.~as $n\to\infty$.
We work conditionally given $\pts^N(\beta)$, and assume that $\max_k \zeta_k < \alpha\frac{\log n}{n}$.
In particular, as before, we let $\{y_{(1)}<\dots<y_{(K_\beta)}\}$ be the ordered $y$-coordinates of the points in $\pts(\beta)$, with $K_\beta=|\pts(\beta)|$, and set by convention $y_{(0)}=0$ and $y_{(K_\beta+1)}=1$.
Then for each $0\le k\le |\pts(\beta)|$, the family $\pts_k^N(\beta)$ is distributed like a Poisson point process with intensity $n \rho_{|[\beta,1]\times[y_{(k)},y_{(k+1)}]}$.

For any $0\le k \le |\pts(\beta)|$, since $\zeta_k < \alpha\frac{\log n}{n}$, \Cref{cor: extreme deviation BST height} applies with $\rho$ restricted to $[\beta,1]\times[y_{(k)},y_{(k+1)}]$ and $\zeta=\alpha\frac{\log n}{n}$.
Thus for $n \zeta=\alpha \log n$ large enough:
\begin{equation*}              
    \prob\left[ \height{\pts_k^N(\beta)} > \eps \log n \right]
    = \prob\left[ \height{\pts_k^N(\beta)} > 2 \frac{\eps}{2\alpha} \zeta n \right] \le 4 \exp \left[-\frac{\eps}{4\alpha} (\alpha \log n) \log(\alpha \log n) \right].
\end{equation*}
A union bound then implies that, still conditionally given the family $\pts(\beta)$ and assuming that $\max_k \zeta_k < \alpha\frac{\log n}{n}$, one has
  \[\prob\left[ \frac1{\log  n} \max_k\height{\pts_k^N(\beta)} > \eps \right]
  \le (|\pts(\beta)|+1) \,\cdot\, 4 \exp \left[-\frac{\eps}{4\alpha} (\alpha \log n) \log(\alpha \log n) \right].\]
But w.h.p., the inequality $\max_k \zeta_k < \alpha\frac{\log n}{n}$ indeed holds and $|\pts(\beta)| <n$, so the unconditioned probability tends to $0$ as $n$ tends to infinity:
\[\prob\left[ \frac1{\log  n} \max_{0\leq k\leq|\pts(\beta)|}\height{\pts_k^N(\beta)} > \eps \right]
  \underset{n\rightarrow\infty}{\longrightarrow}0.\]
This holds for any $\eps>0$, proving the proposition.
\end{proof}

\begin{remark}
    Item (1) in Proposition~\ref{prop:tight rows} could alternatively have been derived using standard results on the maximal gap, also called {\em maximal spacing}, between i.i.d.~uniform random variables; see e.g.~\cite{slud1978gap}.
    Indeed, by applying a thinning procedure, $\max_k\zeta_k$ is bounded above by the maximal gap between $\poisson{n\beta m}$ i.i.d.~uniform variables in $[0,1]$, which is known to concentrate around $\log(n)/(n\beta m)$.
\end{remark}

\subsection{Concluding the proof of the height theorem}

First, we can deduce uniform integrability of all powers of $\frac{\height{\pts^N}}{\log n}$, under a hypothesis which is slightly weaker than \assumption{1} (more precisely, we only
make an assumption regarding the behavior on $[0,\beta]\times[0,1]$).

\begin{prop}\label{prop:UI}
    Let $\mu$ be a permuton.
    Assume that there exists $\beta>0$ such that $\mu_{|[0,\beta]\times[0,1]}$ has a continuous and positive density $\rho:[0,\beta]\times[0,1]\rightarrow(0,\infty)$.
    Then, for any $p>0$, the family of random variables
    \begin{align*}
        \left(\left(\frac{\height{\pts^N_\mu}}{\log n}\right)^p\right)_{n\geq2}
    \end{align*}
    is uniformly integrable.
\end{prop}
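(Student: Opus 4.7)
The plan is to combine the height decomposition of Lemma~\ref{lem:height bound} with the uniform integrability results already established for the top tree (Proposition~\ref{prop:control-top-tree}) and for the sizes of the hanging trees (Proposition~\ref{prop:tight rows}, item~\itemlink{2}). Fix $\beta>0$ as provided by the hypothesis, so that $\mu_{|[0,\beta]\times[0,1]}$ has a continuous positive density $\rho$. Lemma~\ref{lem:height bound} gives
\[ \height{\pts^N_\mu} \le \height{\pts^N_\mu(\beta)} + 1 + \max_{0 \le k \le K_\beta}\height{\pts^N_k(\beta)}. \]
Using $(a+b+c)^p \le 3^{p-1}(a^p + b^p + c^p)$ for $p\geq 1$, it suffices to prove that $\expec[\height{\pts^N_\mu(\beta)}^p]$ and $\expec[\max_k \height{\pts^N_k(\beta)}^p]$ are both $\O(\log(n)^p)$; uniform integrability of all powers then follows, and yields the case $0<p<1$ by Jensen.

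For the top tree, since $\rho$ is continuous and positive on the \emph{compact} rectangle $[0,\beta]\times[0,1]$, it is bounded above by $M:=\max_{[0,\beta]\times[0,1]}\rho<\infty$ and below by $m:=\min_{[0,\beta]\times[0,1]}\rho>0$. By the standard restriction property of Poisson point processes, $\pts^N_\mu(\beta)$ is a Poisson point process of intensity $n\rho$ on $R=[0,\beta]\times[0,1]$, so Proposition~\ref{prop:control-top-tree} applies verbatim and delivers uniform integrability of $(\height{\pts^N_\mu(\beta)}/\log n)^p$ for every $p>0$.

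For the hanging trees, I would use the crude deterministic bound $h(\mathcal T)\le |\mathcal T|$, valid for any tree, to get
\[ \max_{0 \le k\le K_\beta} \height{\pts^N_k(\beta)} \le \max_{0\le k\le K_\beta} \big|\pts^N_k(\beta)\big|. \]
Item~\itemlink{2} of Proposition~\ref{prop:tight rows} then provides uniform integrability of $(\max_k |\pts^N_k(\beta)|/\log n)^p$ for every $p\ge 1$, and uniform integrability is preserved under pointwise domination.

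The subtle point — which I would flag as the main thing to verify — is that the \emph{trivial} inequality $h(\mathcal T)\le |\mathcal T|$, although very loose in general, is sharp enough here because each hanging tree has size $\O(\log n)$ with moments controlled by Proposition~\ref{prop:tight rows}. In particular, no finer estimate (such as a $\O(\log\log n)$ bound on the heights of the hanging trees) is required for this proposition; such finer control is only needed for the in-probability convergence proved in Proposition~\ref{prop:height_hanging_trees}.
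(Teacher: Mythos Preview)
Your proof is correct and follows exactly the same approach as the paper: decompose via Lemma~\ref{lem:height bound}, apply Proposition~\ref{prop:control-top-tree} to the top tree, and bound the hanging trees by their sizes via the trivial inequality $h(\mathcal T)\le|\mathcal T|$ before invoking Proposition~\ref{prop:tight rows}, item~\itemlink{2}. The paper's proof is a one-sentence reference to precisely these three ingredients.
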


\begin{proof}
    This follows immediately from \Cref{lem:height bound}, together with Propositions~\ref{prop:control-top-tree}
    and~\ref{prop:tight rows},
    using the trivial bound $\height{\pts_k^N(\beta)} \le \big|{\pts_k^N(\beta)}\big|$
    for the hanging trees.
\end{proof}

Now we can combine our results to establish \Cref{thm:height} under Assumption~\assumption{1}.

\begin{proof}[Proof of \Cref{thm:height}]
Thanks to \Cref{thm:Poisson fixed}, it suffices to prove the theorem in its Poissonized version.
We shall work with $\pts^N$, a Poisson point process of intensity $n\mu$.

Fix $\eps>0$. 
Let $D$ be a compact neighborhood of $\{0\}\times[0,1]$ on which $\rho$ is continuous and positive, and let $\beta=\beta(\eps)>0$ be given by \Cref{cor:control-top-tree} applied to $\rho$ on $D$.
Therefore
\begin{align*}
    \lim_{n\rightarrow\infty} \prob\left[ \left|\frac{\height{\pts^N(\beta)}}{c^*\log n}-1\right| >\eps \right] =0
\end{align*}
where $\lbst{\pts^N(\beta)} = \lbst{\pts^N\cap ([0,\beta]\times[0,1])}$ is the top tree of $\lbst{\pts^N}$.
Furthermore, by \Cref{prop:height_hanging_trees}, the quantity
\[\frac1{\log n} \max_{0\leq k\leq|\pts(\beta)|}\Big\{\height{\pts_k(\beta)} \Big\}\]
converges in probability to $0$.
Combining this with \Cref{lem:height bound}, we conclude that the random variables $\frac1{\log n} \height{\pts^N}$ converge in probability to $c^*$.
Finally, \Cref{prop:UI} implies uniform integrability of all powers, and thus $L^p$ convergence for all $p\ge1$.
\end{proof}

\section{Examples and extra results}\label{sec:examples and extras}

\subsection{The Mallows permuton}\label{ssec:example_Mallows}

Fix $\gamma\in\bR$, and let $\nu_\gamma$ be the permuton with density
\begin{equation*}
    \rho_\gamma(x,y) := 
    \frac{\gamma \sinh(\gamma)}{\left( e^{\gamma/2}\cosh\big( \gamma(x-y) \big) - e^{-\gamma/2}\cosh\big( \gamma(x+y-1) \big) \right)^2} 
\end{equation*}
for $(x,y)\in[0,1]^2$.
The permuton $\nu_\gamma$ appears as the limit of Mallows random permutations, as introduced in~\cite{mallows1957non}.
We recall that $\sigma_{n,q}$ is a Mallow random permutation of size $n$ and parameter $q$ if for $\tau \in S_n$, the probability $\mathbb P(\sigma_{n,q}=\tau)$ is proportional to $q^{i(\tau)}$,
where $i(\tau)$ is the number of inversions in $\tau$.
When $q=q_n=1-2\gamma\, n^{-1}+o(n^{-1})$, it has been proved in \cite{starr2009permuton-Mallows} that $\sigma_{n,q}$ (or more precisely its associated permuton) converges to the permuton $\nu_\gamma$.

The permuton $\nu_\gamma$ satisfies Assumption~\assumption{1}, therefore $\lbst{\sigma^n_{\nu_\gamma}}$ has height $(c^*+o(1)) \log n$, by \Cref{thm:height}.
This is not surprising, since it was proved in \cite{addario2021bst-mallows} that, in the regime $q=q_n=1-2\gamma\, n^{-1}+o(n^{-1})$, the tree $\lbst{\sigma_{n,q}}$ has height $(c^*+o(1)) \log n$ as well.
The asymptotics of $\height{\sigma^n_{\nu_\gamma}}$ can not be directly deduced from that of $\height{\sigma_{n,q}}$ or vice versa since $\sigma_{n,q}$ and $\sigma^n_{\nu_\gamma}$ have different distributions, but these two random permutations can be coupled in a rather strong way (see \cite{mueller2013Mallows-LIS}, where such a coupling is constructed to study the longest increasing subsequence), and it would have been surprising that the heights of their BSTs behave differently.

Since Assumption~\assumption{2} is weaker that \assumption{1}, we can also apply \Cref{thm:limit} to the permuton $\nu_\gamma$, where the derivative $\nu_{\gamma,0}$ has density
\begin{equation*}
    \rho_{\gamma,0}(y) = \rho_\gamma(0,y) =
    \frac{\gamma \sinh(\gamma)}{\left( e^{\gamma/2}\cosh( \gamma y ) - e^{-\gamma/2}\cosh\big( \gamma(y-1) \big) \right)^2} 
\end{equation*}
for $y\in[0,1]$.
In particular the measure $\nu_{\gamma,0}$ is \textit{not} the uniform measure on $[0,1]$, therefore the random function $\psi_{\nu_{\gamma,0}}$ is \textit{not} distributed like $\psi_{\Leb_{[0,1]}}$.
In other words: $\lbst{\sigma^n_{\nu_\gamma}}$ has a different (random) subtree size limit than the BST of a uniform permutation $\lbst{\sigma^n_{\Leb_{[0,1]^2}}}$.

\subsection{Permutons with partially vanishing densities on the left edge and binary search trees of polynomial height}
\label{sec:hypothesis-A1}

Let $E := \big\{(x,y) \in [0,1]^2: x \le y \le x+\tfrac12 \text{ or }y\le x-\tfrac12\big\}$, see \Cref{fig:Sets_E_Ein} (left).
It is straight-forward to check that the measure $\mu$ defined by $\mu(A) := 2\Leb(A \cap E)$ has uniform marginals, i.e.~is a permuton.

\begin{figure}[htb]
    \centering
    \includegraphics{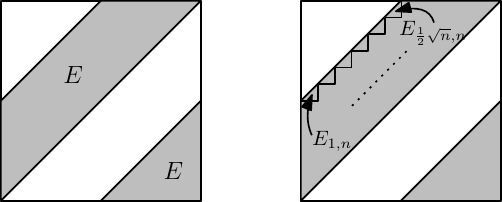}
    \caption{Left: the support $E$ of the permuton of \Cref{prop:deep_BST}. Right: the sets $E_{i,n}$ involved in the proof.}
    \label{fig:Sets_E_Ein}
\end{figure}

\begin{prop}
\label{prop:deep_BST}
    Let $\mu$ be the above permuton. Then, for any $\eps>0$:
    \[\lim_{n\to\infty} \prob\left[\Big. \height{\pts_\mu^n} \ge \tfrac12(1-\tfrac1e - \eps) \sqrt n \right] = 1. \]
\end{prop}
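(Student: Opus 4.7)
The plan is to exhibit, with high probability, a chain of size at least $\tfrac12(1 - \tfrac1e - \eps)\sqrt n$ in $\lbst{\pts_\mu^n}$, formed by one carefully chosen point in each of roughly $\sqrt n/2$ narrow vertical strips near the left edge, each such point lying very close to the upper edge $y = x + \tfrac12$ of the diagonal band of $E$. Since $\height{\cdot}$ equals the maximum chain size minus one, this is enough to conclude.

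Concretely, I would fix a small slack parameter $\eta \in (0,1)$ (to be tuned in terms of $\eps$) and set $\delta := 1/\sqrt n$, $\epsilon := (1-\eta)\delta$ and $K := \lfloor 1/(2\delta) \rfloor$. For $j = 1,\dots,K$, let $B_j := ((j-1)\delta, j\delta]$ and define the \emph{top slice}
\[ T_j := \left\{(x,y) \in E : x \in (j\delta - \epsilon, j\delta],\ y > j\delta + \tfrac12 - \epsilon \right\} . \]
A direct computation shows $T_j$ is a right-triangle of area $\epsilon^2/2$ entirely contained in the diagonal band of $E$, so $\mu(T_j) = \epsilon^2 = (1-\eta)^2/n$. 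Let $S := \{j \le K : \pts_\mu^n \cap T_j \ne \emptyset\}$ and, for $j \in S$, let $q_j = (x_j^*, y_j^*)$ be the leftmost point of $\pts_\mu^n$ in $T_j$.

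The key structural step, and the main technical obstacle I anticipate, is to verify that $(q_j)_{j \in S}$ forms a chain in $\lbst{\pts_\mu^n}$. By \Cref{lem:ancestor condition}, for $j < j'$ both in $S$ it suffices to show that no point of $\pts_\mu^n$ with $x$-coordinate $< x_j^*$ has $y$-coordinate in $(y_j^*, y_{j'}^*)$. Such a candidate must lie either in some earlier strip $B_i$ with $i < j$, in which case $(x,y) \in E$ forces $y \le x + \tfrac12 \le (j-1)\delta + \tfrac12 < j\delta + \tfrac12 - \epsilon \le y_j^*$ (the strict inequality using $\delta > \epsilon$); or in $B_j$ with $x < x_j^*$, in which case the minimality of $q_j$ places it outside $T_j$ and again $y \le j\delta + \tfrac12 - \epsilon \le y_j^*$. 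In both cases $y \le y_j^* < y_{j'}^*$ (the second inequality again using $\delta > \epsilon$), so $y \notin (y_j^*, y_{j'}^*)$.

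It remains to lower bound $|S|$. The indicators $\One_{\{j \in S\}}$ are identically distributed with expectation $1 - (1 - \mu(T_j))^n$, which tends to $1 - e^{-(1-\eta)^2}$ as $n \to \infty$, and are pairwise negatively correlated since $(|\pts_\mu^n \cap T_j|)_{j \le K}$ forms a multinomial random vector. A Chebyshev bound then yields $|S|/K \to 1 - e^{-(1-\eta)^2}$ in probability. Choosing $\eta$ small enough that $1 - e^{-(1-\eta)^2} > 1 - \tfrac1e - \eps$ delivers $|S| \ge \tfrac12(1 - \tfrac1e - \eps)\sqrt n$ w.h.p., whence $\height{\pts_\mu^n} \ge |S| - 1 \ge \tfrac12(1 - \tfrac1e - \eps)\sqrt n$ (the floor and the additive $-1$ are absorbed into $\eps$ for large $n$).
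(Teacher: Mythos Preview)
Your proof is correct and follows essentially the same geometric idea as the paper: both place $\sim \sqrt n/2$ triangles of $\mu$-mass $\sim 1/n$ along the edge $y = x + \tfrac12$, observe that one point per non-empty triangle yields a chain (the paper phrases this via \emph{records} forming the rightmost branch; your leftmost points $q_j$ are in fact records), and then count non-empty triangles. The one notable difference is that the paper Poissonizes for independence and then de-Poissonizes via \Cref{thm:Poisson fixed}, whereas you work directly with fixed $n$ using negative correlation of the multinomial cell occupancies and Chebyshev --- a slightly more self-contained route that avoids the de-Poissonization machinery.
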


\begin{proof}
We first consider a Poisson point process $\pts_\mu^N$ of intensity $n\mu$.
We say that a point $(x,y)$ in $\pts_\mu^N$ is a record if there is no point in $([0,x) \times (y,1]) \cap \pts_\mu^N$, i.e.~if there is no point of $\pts_\mu^N$ above and to the left of $(x,y)$.
It is easily seen that in the construction of $\lbst{\pts_\mu^N}$, a point is inserted in the right-most branch of the tree if and only if it is a record.
Hence the number of nodes on that right-most branch is the number of records in $\pts_\mu^N$, which we will denote by $\rec(\pts_\mu^N)$. 
Therefore, $\height{\pts_\mu^N} \ge \rec(\pts_\mu^N)-1$, and we will prove a lower bound in probability for $\rec(\pts_\mu^N)$.

For any positive integer $i \le \tfrac12 \sqrt n$, let us define $x_i := i/\sqrt n$ and
\[ E_{i,n} :=\{ (x,y): x_{i-1} \le y-\tfrac12 \le x \le x_i \} \subset E \,\]
as shown on \Cref{fig:Sets_E_Ein}. 
We have $\mu(E_{i,n})=2\Leb(E_{i,n})=1/n$, implying that each $E_{i,n}$ contains $\poisson{1}$ points in $\pts^N_\mu$.
Moreover, these numbers are independent random variables.
Hence, for any $\eps>0$, w.h.p., at least a fraction $(1-\tfrac1e - \eps)$ of the sets $E_{i,n}$ (for $i \le \tfrac12 \sqrt n$) contain a point in $\pts^N_\mu$.
Each non-empty $E_{i,n}$ contains at least one record, implying $\rec(\pts_\mu^N) \ge \tfrac12(1-\tfrac1e - \eps) \sqrt n$ w.h.p.
This shows
\[\lim_{n\to\infty} \prob\left[\Big. \height{\pts_\mu^N} \ge \tfrac12(1-\tfrac1e - \eps) \sqrt n \right] = 1. \]
The same result for $\pts_\mu^n$ is deduced using the de-Poissonization techniques of \Cref{thm:Poisson fixed}.
\end{proof}

On the other hand, for any permuton $\mu$ with a bounded density $\rho$, w.h.p.~it holds that $\height{\pts_\mu^n} = \O(\sqrt{n})$.
Indeed, this follows from \Cref{lem: upper bound LIS LDS} and the {proof method of \cite[Proposition~1.3]{dubach2023LIS-inermediate}}.
Hence the lower bound given in~\Cref{prop:deep_BST} is optimal up to a multiplicative constant.

The above example can be modified to a permuton with a continuous density.
Take a continuous function $\varphi:[0,1/4] \to \mathbb R_+$ such that $\varphi(0)=0$ and $\int_0^{1/4}\varphi(t) dt=1/2$.
We then define \[ \rho(x,y)=\varphi\left( \dist_{L_1}\big((x,y),E^c \cup \{(1,0)\}\big) \right) \,\]
Going cyclically along any horizontal line, the $L_1$ distance above grows linearly from $0$ to $1/4$ and then decreases linearly again from $1/4$ to $0$. 
The same holds along vertical lines.
Since $\int_0^{1/4}\varphi(t) dt=1/2$, this implies that the measure $\mu=\rho(x,y)dx dy$ is a permuton supported on $E$.
Moreover, with the notation of the above proof, we have 
\[\mu(E_{i,n})=\int_0^{1/\sqrt n} (1/\sqrt n-t) \varphi(t) dt.\]
Choosing e.g.~$\varphi(t)\sim t^\delta$ for small $t$, we have 
\[\mu(E_{i,n}) \sim \frac{n^{-1-\delta/2}}{(\delta+1)(\delta+2)},\]
implying that each $E_{i,n}$ contains a point in $\pts^N_\mu$ with probability roughly $n^{-\delta/2}$.
Hence, w.h.p.~at least $\Theta(n^{(1-\delta)/2})$ sets among the $E_{i,n}$ are non-empty, showing that the height $\height{\pts_\mu^N}$ has polynomial growth.
This illustrates the importance of the positivity assumption on the density near the left edge of the square made in \Cref{thm:height}.
\medskip

Regarding the subtree size convergence of the permuton from \Cref{prop:deep_BST}, we remark that it satisfies Assumption~\assumption{2} with $\mu_0 = \Leb_{[0,1/2]}$.
Hence, the associated BSTs admit a subtree size limit which is different from the uniform case: 
looking at any fixed depth, asymptotically, \Cref{thm:limit} states that more than half of the nodes of $\lbst{\pts_\mu^n}$ belong to the right-most branch, as is expected from the shape of the permuton observed in \Cref{fig:Sets_E_Ein}.

\subsection{Permutons with positive densities on a band and binary search trees of large logarithmic height}
\label{ssec:logarithmic height but greater constant}

For $\beta >0$, we consider the permuton $\mu_{\beta}$ which has a mass $\beta$ uniformly distributed on the band $[0,\beta]\times[0,1]$ and a mass $1-\beta$ uniformly distributed on the line segment from $(\beta,0)$ to $(1,1)$.
These permutons satisfy the following.

\begin{prop}
For any $\beta>0$ and $\eps>0$, we have
\begin{equation}
\label{eq:height-large-log}
     \lim_{n \to +\infty} \prob\left[ \height{\pts_{\mu_\beta}^n}
       \ge \frac{1-\beta}{\beta+\eps} \log n \right] =1.
\end{equation}
\end{prop}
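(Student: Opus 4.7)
The plan is to exhibit, with probability tending to $1$, an explicit chain in $\lbst{\pts^n_{\mu_\beta}}$ of length at least $\frac{1-\beta}{\beta+\eps}\log n$, consisting of diagonal points whose $y$-coordinates all lie in a single sufficiently large gap between consecutive band $y$-coordinates. The key combinatorial fact is that, by \Cref{lem:ancestor condition}, diagonal points sharing such a common ``$y$-gap'' must be pairwise comparable in the BST, and hence lie on a single root-to-leaf path.

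First I would decompose $\pts^n_{\mu_\beta}$ into its $K\sim\binomial{n}{\beta}$ band points (in $[0,\beta]\times[0,1]$) and $M:=n-K$ diagonal points, noting that conditionally on $K$, the band $y$-coordinates are i.i.d.~uniform on $[0,1]$, and independently the diagonal $y$-coordinates are also i.i.d.~uniform on $[0,1]$ (via the linear parameterization of the diagonal segment). For any $\eps_1>0$, the classical maximal-spacing estimate for uniform samples (compare the remark following \Cref{prop:height_hanging_trees}) gives w.h.p.~a gap $[a,b]$ between two consecutive sorted band $y$-coordinates with $b-a\ge(1-\eps_1)\log K/K$, which using $K=\beta n(1+\o_\prob(1))$ is at least $(1-2\eps_1)\log n/(\beta n)$ for $n$ large. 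The number $D$ of diagonal points whose $y$-coordinate lies in $[a,b]$ is, conditionally on the band configuration, $\binomial{M}{b-a}$-distributed with mean at least $(1-2\eps_1)\tfrac{1-\beta}{\beta}\log n\cdot(1+\o_\prob(1))$, and a standard Chernoff bound yields $D\ge(1-\eps_2)(1-2\eps_1)\tfrac{1-\beta}{\beta}\log n$ w.h.p.~for any $\eps_2>0$.

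To conclude, I would verify via \Cref{lem:ancestor condition} that these $D$ diagonal points indeed form a chain in $\lbst{\pts^n_{\mu_\beta}}$. Sorting them by $x$-coordinate as $p_{j_1},\dots,p_{j_D}$ (equivalently by increasing $y$), for any $s<t$ the points with index less than $j_s$ are either band points (whose $y$-coordinates lie outside $(a,b)\supseteq(y(p_{j_s}),y(p_{j_t}))$) or diagonal points with smaller $x$, hence smaller $y$ than $y(p_{j_s})$; in both cases none lies in $(y(p_{j_s}),y(p_{j_t}))$, so $p_{j_s}$ is an ancestor of $p_{j_t}$. This gives $\height{\pts^n_{\mu_\beta}}\ge D-1$, and choosing $\eps_1,\eps_2$ small enough so that $(1-\eps_2)(1-2\eps_1)>\beta/(\beta+\eps)$ completes the proof. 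The main non-trivial ingredient is the maximal-spacing estimate, which is classical; everything else is a direct combination of \Cref{lem:ancestor condition} and standard concentration.
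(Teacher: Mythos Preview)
Your proposal is correct and follows essentially the same route as the paper: locate the largest $y$-gap among the band points via the classical maximal-spacing estimate, count the diagonal points falling in that gap, and observe that (since the diagonal is monotone) they form a chain. The only presentational differences are that the paper first Poissonizes and then invokes \Cref{thm:Poisson fixed}, and it phrases the chain property through the top/hanging-tree decomposition (\Cref{lem:height bound}) rather than checking \Cref{lem:ancestor condition} directly; your direct, non-Poissonized argument is equally valid and arguably a bit more self-contained.
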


\begin{proof}
As usual we first consider a Poissonized version, namely let $\pts_{\mu_\beta}^N$ be a Poisson point process of intensity $n \mu_\beta$.
As in \Cref{sec:height-proof}, we let $y_{(1)}<\dots<y_{(K_\beta)}$ be the ordered $y$-coordinates of the points in $\pts(\beta) $.
The number $K_\beta$ follows a $\poisson{\beta n}$ law and thus $K_\beta/(\beta n)$ converges to $1$ in probability.
Finally, we set $\zeta_*=\max_k y_{(k+1)}-y_{(k)}$ with the convention $y_{(0)}=0$ and $y_{(K_\beta+1)}=1$. Conditionally given $K_\beta$, this is the maximal gap (or maximal spacing) defined by $K_\beta$ i.i.d.~uniform random variables in $[0,1]$, and, from a result of \cite{slud1978gap}, the quotient $\frac{\zeta_* K_\beta}{\log(K_\beta)}$ converges to $1$ in probability.
Hence $\frac{\zeta_* \beta n}{\log(n)}$  converges to $1$ in probability.
 
We now observe that, since the points of $\pts_{\mu_\beta}^N$ with $x$-coordinates bigger than $\beta$ are in increasing order, all hanging trees $\lbst{\pts_k(\beta)}$ consist of a single right branch, and thus $\height{\pts_k(\beta)}=|\pts_k(\beta)|-1$ for any $k \le K_\beta$.
Conditionally given $\pts(\beta)$, and letting $k_0$ be such that $y_{(k_0+1)}-y_{(k_0)}=\zeta_*$, the number of elements in $\pts_{k_0}(\beta)$ concentrates around $n(1-\beta) \zeta_*$, which is close to $\frac{1-\beta}{\beta} \log n$ in probability.
Since $\height{\pts_{\mu_\beta}^N} \ge \height{\pts_{k_0}(\beta)}$, this proved \eqref{eq:height-large-log} with $\pts_{\mu_\beta}^n$ replaced by its Poissonized version $\pts_{\mu_\beta}^N$.
The proposition follows using the de-Poissonization techniques of \Cref{thm:Poisson fixed}.
\end{proof}

Our bound is once again optimal up to a multiplicative constant.
Indeed, the permutons $\mu_\beta$ satisfy the hypotheses of \Cref{prop:UI}, which proves that all powers of $\frac{\height{\pts_{\mu_\beta}^n}}{\log n}$ are uniformly integrable.
\medskip

Moreover, the permuton $\mu_\beta$ clearly satisfies Assumption~\assumption{2}, with left-derivative $\mu_{\beta,0} = \Leb_{[0,1]}$.
By \Cref{thm:limit}, the BSTs $\lbst{\pts_{\mu_\beta}^n}$ therefore have the same subtree size limit as in the uniform case.

\subsection{Permutons with no subtree size limit}\label{ssec:no_STS}

In this section, we exhibit two permutons for which the BSTs have no subtree size limit.
The first one, $\mu^{(1)}$, does not satisfy \eqref{eq:def_mu0}, therefore $\lbst{\pts_{\mu^{(1)}}^n}$ does not converge for the subtree size topology by \Cref{thm:limit}.
The second one, $\mu^{(2)}$, satisfies \eqref{eq:def_mu0} with $\mu_0 = \delta_{1/2}$, but we can show that $\lbst{\pts_{\mu^{(2)}}^n}$ does not converge for the subtree size topology.
Both are illustrated in \Cref{fig:permutons_no_STS}.

\begin{figure}
    \centering
    \includegraphics[scale=.9]{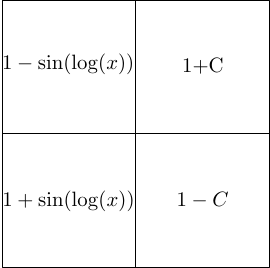}
    \qquad\qquad\qquad
    \includegraphics[scale=.9]{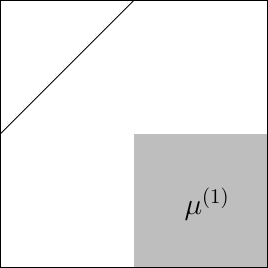}
    \caption{The permutons $\mu^{(1)}$ (on the left) and $\mu^{(2)}$ (on the right) constructed in \Cref{ssec:no_STS}.}
    \label{fig:permutons_no_STS}
\end{figure}

First, define $\mu^{(1)}$ as the permuton with density
\begin{equation*}
    f(x,y) =
    \left\{
    \begin{array}{llll}
    1 + \sin\left(\log x\right)
    & \text{if } (x,y)\in (0,1/2)\times(0,1/2) \\
    1 - \sin\left(\log x\right)
    & \text{if } (x,y)\in (0,1/2)\times(1/2,1) \\
    1-C
    & \text{if } (x,y)\in (1/2,1)\times(0,1/2) \\
    1+C
    & \text{if } (x,y)\in (1/2,1)\times(1/2,1) \\
    \end{array}
    \right.
\end{equation*}
where $C := \int_0^{1/2} \sin\left(\log x\right) dx$.
It is straightforward to check $\int_0^1 \int_0^t f(x,y)dxdy = \int_0^t \int_0^1 f(x,y)dxdy = t$ for all $t\in[0,1]$, therefore $\mu^{(1)}$ is indeed a permuton.

\begin{prop}\label{prop:permuton_not_2}
    The permuton $\mu^{(1)}$ does not satisfy \eqref{eq:def_mu0}, i.e.~the measures
    \begin{equation*}
        \frac1x \mu^{(1)}\left(\big. [0,x]\times\cdot \right)
    \end{equation*}
    do not converge as $x\to0^+$.
\end{prop}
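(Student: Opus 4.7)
The plan is to compute the measures $\nu_x := \frac{1}{x}\,\mu^{(1)}([0,x]\times\cdot)$ explicitly for small $x$, and then exhibit a point $y^\star \in (0,1)$ at which $\nu_x([0,y^\star])$ oscillates as $x \to 0^+$, which precludes weak convergence.

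First I would compute the key antiderivative. By the change of variable $u = \log t$ and the integrability of $e^u \sin u$ at $-\infty$:
\begin{equation*}
    \int_0^x \sin(\log t)\, dt
    \,=\, \int_{-\infty}^{\log x} e^u \sin u \, du
    \,=\, \tfrac{x}{2}\,\big(\sin(\log x) - \cos(\log x)\big).
\end{equation*}
Combined with Fubini's theorem and the explicit form of the density of $\mu^{(1)}$, this would give, for $0 \le a \le b \le \tfrac12$ and $x \in (0,\tfrac12)$:
\begin{equation*}
    \nu_x([a,b]) \,=\, (b-a)\,\Big[1 + \tfrac12\big(\sin(\log x) - \cos(\log x)\big)\Big],
\end{equation*}
and symmetrically $\nu_x([a,b]) = (b-a)\big[1 - \tfrac12\big(\sin(\log x) - \cos(\log x)\big)\big]$ for $\tfrac12 \le a \le b \le 1$.

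Setting $g(x) := \sin(\log x) - \cos(\log x) = \sqrt{2}\,\sin(\log x - \pi/4)$, the cumulative distribution function $F_x(y) := \nu_x([0,y])$ is then equal to $y\,(1+g(x)/2)$ on $[0,\tfrac12]$ and to $y + (1-y)g(x)/2$ on $[\tfrac12,1]$. Since $\log x \to -\infty$ as $x \to 0^+$, $g(x)$ takes every value in $[-\sqrt{2},\sqrt{2}]$ infinitely often, so for every $y \in (0,1)$, $F_x(y)$ oscillates over an interval of positive length as $x \to 0^+$.

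Finally I would conclude by contradiction: if $\nu_x$ converged weakly to some probability measure $\mu_0$ on $[0,1]$, the standard characterization of weak convergence via distribution functions would force $F_x(y) \to F_{\mu_0}(y)$ at each continuity point $y$ of $F_{\mu_0}$. Since $F_{\mu_0}$ has at most countably many discontinuities, some $y^\star \in (0,1)$ is a continuity point of $F_{\mu_0}$, contradicting the oscillation of $F_x(y^\star)$ established above. The only nontrivial step is the antiderivative computation; the rest is automatic, the main (mild) care being to ensure that the chosen continuity point lies strictly inside $(0,1)$ rather than at an endpoint.
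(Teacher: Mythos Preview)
Your proof is correct and follows essentially the same approach as the paper: compute the antiderivative $\int_0^x \sin(\log t)\,dt = \tfrac{x}{2}(\sin(\log x)-\cos(\log x))$, deduce that the cumulative distribution function $F_x(y)$ oscillates for $y\in(0,1/2)$, and conclude via the characterization of weak convergence through distribution functions (the paper phrases this last step as an appeal to the Portmanteau theorem). Your write-up is slightly more detailed in handling the upper half $[1/2,1]$ and in ensuring a continuity point exists, but the argument is the same.
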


\begin{proof}
    For any $(x,y)\in(0,1/2)^2$, we have:
    \begin{equation*}
        \frac1x \mu^{(1)}\left(\big. [0,x]\times[0,y] \right)
        = \frac1x \int_0^y \int_0^x f(s,t) dsdt
        = \frac{y}{x} \int_0^x 1 + \sin\left(\log s\right) ds
        = y + \frac{y}{2}\left(\big. \sin(\log x) - \cos(\log x)\right) .
    \end{equation*}
    For any fixed $y\in(0,1/2)$, this does not converge as $x\to0^+$.
    Therefore the distribution function of $\frac1x \mu^{(1)}\left(\big. [0,x]\times\cdot \right)$ does not converge at any fixed $y\in(0,1/2)$ as $x\to0^+$, and by the Portmanteau theorem, this concludes the proof.
\end{proof}

Now, we can use the permuton $\mu^{(1)}$ to construct a permuton $\mu^{(2)}$ which satisfies \eqref{eq:def_mu0}, but for which $\lbst{\pts_{\mu^{(2)}}^n}$ does not converge for the subtree size topology.
Informally, $\mu^{(2)}$ puts weight $1/2$ on straight line from $(0,1/2)$ to $(1/2,1)$ and contains a rescaled copy of $\mu^{(1)}$ of total mass $1/2$ in the lower right corner, see the right-hand side of \Cref{fig:permutons_no_STS}.
Formally, $\mu^{(2)}$ is characterized by:
\begin{equation*}
    \int_0^1 \int_0^1 h(x,y) \mu^{(2)}(dx,dy)
    = \int_0^{1/2} h(t, 1/2+t) dt
    + \int_0^1 \int_0^1 \frac12 h( (x+1)/2 , y/2 ) \mu^{(1)}(dx,dy)
\end{equation*}
for any bounded, measurable $h:[0,1]^2 \to \bR$.
Clearly, it satisfies \eqref{eq:def_mu0} with $\mu_0 = \delta_{1/2}$.

\begin{prop}
    The sequence $\lbst{\pts_{\mu^{(2)}}^n}$ does not converge for the subtree size topology as $n\to\infty$.
\end{prop}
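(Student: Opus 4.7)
The plan is to exhibit a node $u\in\rV$ such that $t(\lbst{\pts_{\mu^{(2)}}^n}, u)$ fails to converge in distribution as $n\to\infty$, thereby preventing subtree size convergence. The main idea is that a carefully chosen subtree of $\lbst{\pts_{\mu^{(2)}}^n}$ is essentially a BST of a $\mu^{(1)}$-sample, so non-convergence is inherited from \Cref{prop:permuton_not_2}.

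First I would decompose $\pts_{\mu^{(2)}}^n = \mathcal S_1^n \sqcup \mathcal S_2^n$ according to whether each point lies on the diagonal segment $\{(t,1/2+t): t\in[0,1/2]\}$ or in the lower-right block $[1/2,1]\times[0,1/2]$. By the construction of $\mu^{(2)}$, $N_n':=|\mathcal S_2^n|$ is $\binomial{n}{1/2}$-distributed and, conditionally on $N_n'$, $\mathcal S_2^n$ is an i.i.d.~sample from the affine rescaling of $\mu^{(1)}$ onto that block. All $x$-coordinates of $\mathcal S_1^n$ lie in $[0,1/2]$, so these points are inserted first in $\lbst{\pts_{\mu^{(2)}}^n}$, and since their $y$-coordinates strictly increase with their $x$-coordinates along the diagonal they form a pure right-going branch. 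Moreover $\mathcal S_1^n$'s $y$-coordinates lie in $(1/2,1]$ while $\mathcal S_2^n$'s lie in $[0,1/2)$, so every point of $\mathcal S_2^n$ descends strictly to the left of the root. The subtree of $\lbst{\pts_{\mu^{(2)}}^n}$ rooted at node $0$ therefore has the same shape as $\lbst{\mathcal S_2^n}$, which in turn (since the rescaling preserves relative orders) has the same shape as $\lbst{\pts_{\mu^{(1)}}^{N_n'}}$.

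I would then focus on the node $u=00$. Combining \Cref{lem:subtree-proporitions_in_BSTs} with the structural observation above gives
\begin{equation*}
    t(\lbst{\pts_{\mu^{(2)}}^n}, 00)
    = \frac{N_n'}{n}\,t\big(\lbst{\pts_{\mu^{(1)}}^{N_n'}}, 0\big),
\end{equation*}
and $t(\lbst{\pts_{\mu^{(1)}}^{N_n'}}, 0)$ equals the proportion of points of the sample whose $y$-coordinate is less than $Y_{(1)}^{(1),N_n'}$, that of the leftmost point. Since $\mu^{(1)}$ has uniform $y$-marginal, a standard law of large numbers applied conditionally on the leftmost point yields $t(\lbst{\pts_{\mu^{(1)}}^{N_n'}}, 0)=Y_{(1)}^{(1),N_n'}+o_\prob(1)$. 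Combined with $N_n'/n \to 1/2$ in probability, this gives $t(\lbst{\pts_{\mu^{(2)}}^n}, 00) = \tfrac12 Y_{(1)}^{(1),N_n'} + o_\prob(1)$.

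It remains to show that $Y_{(1)}^{(1),N_n'}$ does not converge in distribution. By \Cref{prop:permuton_not_2} combined with the equivalence in \Cref{corol:k-pts-fixed-n}, the random variable $Y_{(1)}^{(1),k}$ does not converge in distribution as $k\to\infty$ along the integers. To transfer this to the random index $N_n'$, I would compute $p(k):=\prob[Y_{(1)}^{(1),k}\le 1/2]$ explicitly: using the density of $\mu^{(1)}$ near $x=0$, after the change of variable $u=kx$ and the identity $\int_0^\infty e^{-u} u^i du = \Gamma(1+i)$, one obtains an asymptotic expansion $p(k) = \tfrac12 + A\sin(\log k + \phi) + o(1)$ with $A\neq 0$ (indeed $|A|$ is proportional to $|\Gamma(1+i)|>0$). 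Since $N_n'-n/2 = O(\sqrt n)$ with high probability and $\sin$ is Lipschitz, conditioning on $N_n'$ gives $\prob[Y_{(1)}^{(1),N_n'}\le 1/2]=p(n/2)+o(1)$, which oscillates in $\log n$ and hence does not converge. The main obstacle I anticipate is this final step: extracting the leading oscillatory term of $p(k)$ precisely enough that the randomization through $N_n'$ cannot smooth it out.
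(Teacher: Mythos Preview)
Your structural observation matches the paper's: the left subtree of the root of $\lbst{\pts_{\mu^{(2)}}^n}$ is a BST of a $\mu^{(1)}$-sample of random size, while the diagonal part only populates the right spine. The paper's execution is shorter: it Poissonizes so that the random size $N'$ is $\poisson{n/2}$, then simply invokes the converse direction of \Cref{thm:limit} together with \Cref{prop:permuton_not_2} to conclude that $\lbst{\pts_{\mu^{(1)}}^{n}}$ fails to converge in the subtree size topology, and transfers to $N'$ and back to fixed $n$ by one-line ``Poissonization/dePoissonization'' remarks. Your route is more hands-on: you fix the specific node $u=00$, reduce $t(\cdot,00)$ to $\tfrac12 Y_{(1)}^{(1),N_n'}+o_\prob(1)$, and then work out the asymptotics of $p(k)=\prob[Y_{(1)}^{(1),k}\le 1/2]$ explicitly via a Laplace-type integral to exhibit the oscillation and check that it survives the binomial randomization of $N_n'$. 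This is more labor but entirely self-contained --- in particular it makes precise the randomization step that the paper's proof leaves implicit.

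One small caveat: oscillation of $\prob[Y_{(1)}^{(1),N_n'}\le 1/2]$ alone does not rule out convergence in distribution if $1/2$ happened to be an atom of the putative limit law. The fix is immediate, since your computation goes through verbatim for $\prob[Y_{(1)}^{(1),k}\le y]$ at every $y\in(0,1/2)$ (giving $y + yA'\sin(\log k+\phi)+o(1)$), and any limit distribution has at most countably many atoms.
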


\begin{proof}
    Let $\pts_{\mu^{(2)}}^N$ be a Poisson point process with intensity $n\mu^{(2)}$.
    Then 
    \[ \pts_{\mu^{(2)}}^N = \left( \pts_{\mu^{(2)}}^N \cap [0,1/2]\times[1/2,1] \right) \cup \left( \pts_{\mu^{(2)}}^N \cap [1/2,1]\times[0,1/2] \right) \]
    where $\pts_{\mu^{(2)}}^N \cap [0,1/2]\times[1/2,1]$ is an up-right sequence of points, and $\pts_{\mu^{(2)}}^N \cap [1/2,1]\times[0,1/2]$ is an affine transformation of a Poisson point process with intensity $\frac12 n \mu^{(1)}$.
    Therefore the subtree of $\lbst{\pts_{\mu^{(2)}}^N}$ rooted at the left child of the root is distributed like $\lbst{\pts_{\mu^{(1)}}^{N'}}$, where $N' \sim \poisson{n/2}$.
    By \Cref{prop:permuton_not_2} and \Cref{thm:limit}, the sequence $\lbst{\pts_{\mu^{(1)}}^{n}}$ does not converge for the subtree size topology, and the same holds for $\lbst{\pts_{\mu^{(1)}}^{N'}}$ by Poissonization.
    Hence $\lbst{\pts_{\mu^{(2)}}^N}$ does not converge for the subtree size topology, and the same holds for $\lbst{\pts_{\mu^{(2)}}^n}$ by dePoissonization.
\end{proof}

\subsection{A lower bound result}

As a last result in this paper, we emphasize that the lower bound in \Cref{thm:height} holds under a rather weak hypothesis.
This can be seen as a partial result towards \Cref{conj:uniform-optimal}.

\begin{prop}\label{prop:uniform_optimal_partial}
Let $\mu$ be a permuton. 
Suppose there exists $0\leq y\leq 1$ such that $\mu$ admits a continuous, positive density $\rho$ on a neighborhood of the point $(0,y)$.
Then for any $\eps>0$:
\begin{equation*}
    \lim_{n\to\infty} \prob\left[ \frac{\height{\pts_\mu^N}}{c^*\log n} \le 1-\eps \right] = 0.
\end{equation*}
\end{prop}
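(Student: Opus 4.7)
The plan is to exhibit, with high probability, a long chain in $\lbst{\pts_\mu^N}$ whose nodes all lie in a small rectangle $R$ touching the left edge near $(0,y)$ in which the density is nearly constant, and then invoke the lower bound on $\height{\pts_\mu^N \cap R}$ already provided by \Cref{cor:control-top-tree}. The apparent difficulty is that adding points to a point set can strictly decrease the height of its BST (see \Cref{rem:height and points}), so a chain in $\lbst{\pts_\mu^N \cap R}$ need not survive as a chain in $\lbst{\pts_\mu^N}$ for a generic rectangle $R$; this obstacle is the main non-routine point, and it will be resolved by a specific geometric feature of rectangles flush with the left edge.

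Fix $\eps > 0$. I would first choose a closed rectangle $R := [0,\beta] \times J$, with $J \subseteq [0,1]$ a closed interval containing $y$ (one-sided if $y \in \{0,1\}$) and $\beta > 0$ small enough that $R$ sits inside the neighborhood of $(0,y)$ on which $\rho$ is continuous and positive, and that \Cref{cor:control-top-tree} applied to $D := R$ yields $\height{\pts_\mu^N \cap R}/(c^* \log n) \to 1$ in probability. The heart of the argument is then the claim that $\height{\pts_\mu^N} \ge \height{\pts_\mu^N \cap R}$ almost surely. To prove it, take a maximum chain $(p_1, \ldots, p_{h+1})$ in $\lbst{\pts_\mu^N \cap R}$, ordered by increasing $x$-coordinate; I claim the same points form a chain (in the same ancestry order) in $\lbst{\pts_\mu^N}$. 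By \Cref{lem:ancestor condition}, this reduces to showing that for every $i<j$, no point $p_k = (x_k,y_k) \in \pts_\mu^N$ with $x_k < x_i$ has $y_k$ strictly between $y_i$ and $y_j$. The chain property inside $R$ handles the case $p_k \in R$. For $p_k \notin R$: since $x_k < x_i \le \beta$, the point $p_k$ must lie in the left strip $[0,\beta] \times [0,1]$ and outside $R$, which forces $y_k \notin J$; but $y_i, y_j \in J$, so $y_k$ lies strictly below or strictly above both, and hence not strictly between them.

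Combining the two steps,
\[\prob\left[ \frac{\height{\pts_\mu^N}}{c^* \log n} \le 1 - \eps \right] \le \prob\left[ \frac{\height{\pts_\mu^N \cap R}}{c^* \log n} \le 1 - \eps \right] \underset{n\to\infty}{\longrightarrow} 0,\]
which is the desired conclusion. I expect the transfer claim $\height{\pts_\mu^N} \ge \height{\pts_\mu^N \cap R}$ to be the main conceptual step; once it is observed, the rest is essentially a repackaging of \Cref{cor:control-top-tree}. The underlying geometric idea is that a rectangle flush with the left edge is \enquote{insulated} with respect to ancestry: any point with smaller $x$-coordinate than a point of $R$ automatically lies in the left strip, and if it is outside $R$ then its $y$-coordinate is forced outside the $y$-range of $R$, so it cannot disrupt the ancestry relations among the points of $R$.
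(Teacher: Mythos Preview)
Your proof is correct and follows essentially the same approach as the paper: both apply \Cref{cor:control-top-tree} to a thin rectangle $R$ flush with the left edge near $(0,y)$, then use \Cref{lem:ancestor condition} together with the geometric observation that any point to the left of a point in $R$ either lies in $R$ or has $y$-coordinate outside $J$, so chains in $\lbst{\pts_\mu^N \cap R}$ persist in $\lbst{\pts_\mu^N}$. The only minor wrinkle is the slight circularity in choosing $\beta$ before knowing the threshold from \Cref{cor:control-top-tree} applied to $D:=R$; the paper avoids this by first fixing a compact neighborhood $D$ of $(0,y)$, obtaining $\beta(\eps)$ from the corollary, and then selecting the rectangle of width $\le \beta$ inside $D$.
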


\begin{proof}
    This is relatively easy to prove, based on some intermediate results, established while proving \Cref{thm:height}.
    Let $D$ be a compact neighborhood of the point $(0,y)$ on which $\mu$ admits a continuous, positive density $\rho$.
    Fix $\eps>0$ and let $\beta=\beta(\eps)>0$ be given by \Cref{cor:control-top-tree}.
    We can take $0<\delta\le\beta$ so that there exists at least one rectangle $R_0=[0,\delta]\times[y_0-\delta,y_0+\delta]$ contained in $D$.
    Then by \Cref{cor:control-top-tree}: 
    \begin{equation*}
        \lim_{n\to\infty} \prob\left[ \frac{\height{\pts_\mu^N\cap R_0}}{c^*\log n} \le 1-\eps \right] = 0.
    \end{equation*}
    Moreover, using \Cref{lem:ancestor condition}, one can see that each chain of $\lbst{\pts_\mu^N\cap R_0}$ is still a chain in $\lbst{\pts^N_\mu}$.
    Therefore $\height{\pts_\mu^N\cap R_0} \le \height{\pts^N_\mu}$ a.s., and this concludes the proof.
\end{proof}

\section*{Acknowledgments}
The authors are grateful to Mathilde Bouvel for pointing out the work of \cite{grubel2023note} and for several stimulating discussions on the topic.
The authors are also grateful to the anonymous referee for their valuable comments, in particular for suggesting the converse statement
in \Cref{thm:limit}.
VF is partially supported by the Future Leader Program of the LUE (Lorraine Universit\'e d'Excellence) initiative.
Funds from this program were used for a visit of BC in Nancy, during which this project was initiated.
BC has received funding from the European Union's Horizon 2020 Research and Innovation Programme under the Marie Sk\l{}odowska-Curie Grant Agreement No.~101034253.

\bibliographystyle{bibli_perso} 
\bibliography{main}

\end{document}